\numberwithin{equation}{section}
\numberwithin{figure}{section}
\theoremstyle{plain}
\newtheorem{thm}{\protect\theoremname}[section]
  \theoremstyle{plain}
  \newtheorem{prop}[thm]{\protect\propositionname}
  \theoremstyle{remark}
  \newtheorem*{rem*}{\protect\remarkname}
  \theoremstyle{plain}
  \newtheorem{conjecture}[thm]{\protect\conjecturename}
  \theoremstyle{plain}
  \newtheorem{lem}[thm]{\protect\lemmaname}
  \theoremstyle{plain}
  \newtheorem{cor}[thm]{\protect\corollaryname}
  \providecommand{\conjecturename}{Conjecture}
  \providecommand{\corollaryname}{Corollary}
  \providecommand{\lemmaname}{Lemma}
  \providecommand{\propositionname}{Proposition}
  \providecommand{\remarkname}{Remark}
\providecommand{\theoremname}{Theorem}
\begin{document}

\title{Pair correlation for quadratic polynomials mod 1}

\author{Jens Marklof and Nadav Yesha}
\begin{abstract}
It is an open question whether the fractional parts of non-linear polynomials at integers have the same fine-scale statistics as a Poisson point process. Most results towards an affirmative answer have so far been  restricted to almost sure convergence in the space of polynomials of a given degree. We will here provide explicit Diophantine conditions on the coefficients of polynomials of degree 2, under which the convergence of an averaged pair correlation density can be established. The limit is consistent with the Poisson distribution. Since quadratic polynomials at integers represent the energy levels of a class of integrable quantum systems, our findings provide further evidence for the Berry-Tabor conjecture in the theory of quantum chaos.
\end{abstract}

\date{\today}

\address{Jens Marklof, School of Mathematics, University of Bristol, Bristol
BS8 1TW, U.K.}

\email{J.Marklof@bristol.ac.uk}

\address{Nadav Yesha, School of Mathematics, University of Bristol, Bristol
BS8 1TW, U.K.}

\curraddr{Department of Mathematics, King's College London, Strand, London WC2R 2LS, U.K.} \email{nadav.yesha@kcl.ac.uk}

\maketitle

\section{Introduction}

Let $\varphi(z)=\alpha_q z^q + \alpha_{q-1} z^{q-1} +\ldots + \alpha_0$ be a polynomial of degree $q$. In his ground-breaking 1916 paper \cite{Weyl}, H.~Weyl proved that, if one of the coefficients $\alpha_q,\ldots,\alpha_1$ is irrational, then the sequence $\varphi(1),\varphi(2),\varphi(3),\ldots$ is uniformly distributed mod 1. That is, for any interval $A\subset\mathbb{R}$ of length $|A|\leq 1$ we have
\begin{equation}\label{eq:mod1}
\lim_{N\to\infty} \frac{\#\left\{ j\leq N \mid \varphi(j) \in A +\mathbb{Z}  \right\}}{N} = |A| .
\end{equation}
A century after Weyl's work, it is remarkable how little is known on the fine-scale statistics of $\varphi(j)$ mod 1 in the case of polynomials $\varphi$ of degree $q\geq 2$. (The linear case $q=1$ is singular and related to the famous three gap theorem; cf.~\cite{Slater,Bleher,Greenman,PandeyBohigasGiannoni}).
A popular example of such a statistics is the {\em gap distribution}:
for given $N$, order the values 
$\varphi(1), \ldots,\varphi(N)$ in $[0,1)$ mod 1, and label them as $$0\leq \theta_{1,N}\leq \theta_{2,N}\leq\ldots\leq \theta_{N,N}<1.$$  
As we are working mod 1, it is convenient to set $\theta_{0,N}:=\theta_{N,N}-1$.
The gap distribution of $\{\varphi(j)\}_{j=1}^N$ is then given by the probability measure $P_N(\,\cdot\,;\varphi)$ defined, for any interval $A\subset [0,\infty)$, by
\begin{equation}
P_N(A;\varphi) = \frac{\#\left\{ j\leq N \mid \theta_{j,N}-\theta_{j-1,N} \in N^{-1} A\right\}}{N}.
\end{equation}

Rudnick and Sarnak \cite{RudnickSarnak} conjecture the following.

\begin{conjecture}\label{gapcon}
Let $q\ge 2$. There is a set $Q\subset\mathbb{R}$ of full Lebesgue measure, such that for $\alpha_q\in Q$ and any interval $A\subset [0,\infty)$,
\begin{equation}\label{eq:congap}
\lim_{N\to\infty} P_N(A;\varphi) = \int_{A}e^{-s}\,\mbox{d}s .
\end{equation}
\end{conjecture}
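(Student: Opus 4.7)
The plan is to establish Conjecture \ref{gapcon} via the method of correlation functions. By the standard Rudnick--Sarnak reduction, convergence of the gap distribution $P_N(\,\cdot\,;\varphi)$ to the Poisson law follows from almost sure convergence of every $m$-level correlation ($m\geq 2$) of $\{\varphi(j)\}_{j=1}^N$ mod $1$ to the corresponding Poisson value. Concretely, for each fixed $m$ and each Schwartz function $f$ on $\mathbb{R}^{m-1}$, the goal is to show that, for $\alpha_q$ in a full-measure set,
\begin{equation}\label{eq:mlevel}
R_m^N(f;\varphi) := \frac{1}{N}\sum_{\substack{j_1,\ldots,j_m\leq N\\ \text{distinct}}} f\bigl(N(\varphi(j_1)-\varphi(j_2)),\ldots,N(\varphi(j_{m-1})-\varphi(j_m))\bigr)\;\longrightarrow\;\int_{\mathbb{R}^{m-1}} f,
\end{equation}
with all differences reduced to $[-\tfrac12,\tfrac12)$.

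The strategy proceeds in three steps. First, Fourier-invert $f$ to express $R_m^N(f;\varphi)$ as a linear combination of the multilinear exponential sums
\begin{equation}
S_N(\mathbf{k};\varphi) = \sum_{j_1,\ldots,j_m\leq N} \exp\Bigl(2\pi i\bigl(k_1\varphi(j_1)+\cdots+k_m\varphi(j_m)\bigr)\Bigr),
\end{equation}
weighted by rescaled values of $\hat f$. Second, isolate the diagonal contribution from coincidences $j_i=j_{i'}$: combining Weyl's theorem \eqref{eq:mod1} with a local equidistribution argument for the increments $\varphi(j)-\varphi(j')$ at short scale yields precisely the Poisson value $\int f$. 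Third, prove that the off-diagonal contribution tends to zero almost surely in $\alpha_q$. For this I compute its $L^2$-norm over $\alpha_q$ on a fixed compact interval: orthogonality of $n\mapsto \exp(2\pi i \alpha_q n)$ converts the mean-square into a resonance constraint $\sum_i k_i j_i^q = \sum_i k_i' (j_i')^q$, reducing matters to (i) counting lattice points on this polynomial variety uniformly in $\mathbf{k},\mathbf{k}'$, and (ii) bounding the resulting exponential sums in the lower-order coefficients $\alpha_{q-1},\ldots,\alpha_1$ by $(q-1)$-fold Weyl differencing. A variance bound of shape $O(N^{-\delta})$ along a geometric subsequence $N_k$, Borel--Cantelli, and a monotonicity argument to fill in the intermediate $N$, together yield \eqref{eq:mlevel} almost surely.

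The hardest part is clearly Step 3. The rescaling by $N$ inside $f$ forces one to handle frequencies $|k_i|\lesssim N$, so the required bound on solutions of $\sum_i k_i j_i^q=\sum_i k_i'(j_i')^q$ with $|j_i|,|j_i'|\le N$ and $|k_i|,|k_i'|\le N$ must be \emph{sharp}, much stronger than a generic Vinogradov-type estimate and genuinely delicate for $q\geq 2$. At the same time, the Weyl differencing in the remaining coefficients must be organised so as to exploit the Diophantine character of $\alpha_{q-1},\ldots,\alpha_1$ that defines the exceptional null set. Executing these two estimates simultaneously, and achieving enough uniformity in $m$ to recover the full gap distribution from the $m$-level data, is where the main technical difficulty will concentrate; the case $m=2$, $q=2$ treated in this paper gives a concrete template for how such resonance and differencing arguments can be carried through.
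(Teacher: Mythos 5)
You are attempting to prove Conjecture \ref{gapcon}, which the paper does not prove and explicitly records as completely open; its actual results (Theorem \ref{thm:MainThm} and its variants) establish only an \emph{averaged pair correlation} statement for the special quadratic family \eqref{ours} under an explicit Diophantine hypothesis, which is far weaker than the gap-distribution statement for almost every leading coefficient and general degree $q\geq 2$. Measured against that, your proposal is not a proof but an outline of the well-known correlation-function strategy in which the decisive steps are deferred rather than carried out. The reduction of the gap distribution to correlations requires almost sure convergence of \emph{all} $m$-level correlations, $m\geq 2$ (pair correlation alone does not determine the gap law); the case $m=2$ is exactly Rudnick--Sarnak's Theorem \ref{thm:pair}, and the cases $m\geq 3$ are precisely where the problem is stuck. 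Your Step 3 variance computation leads to counting integer solutions of $\sum_i k_i j_i^q=\sum_i k_i'(j_i')^q$ with $|j_i|,|j_i'|\leq N$ and frequencies as large as $N$, and you yourself note that a sharp count, ``much stronger than a generic Vinogradov-type estimate,'' is required. No such estimate is supplied, and none is currently known; asserting that it can be combined with Weyl differencing in the lower-order coefficients does not close the gap, since this is the open heart of the conjecture rather than a routine technicality. Your closing sentence, that executing these estimates ``is where the main technical difficulty will concentrate,'' is an acknowledgement that the proof has not been given.

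Two further cautions about the surrounding steps. First, Sinai's observation (cited in the introduction) that high moments of the fine-scale distribution fail to converge for random quadratic polynomials means that any passage from correlation data back to the gap distribution must be organised through weak convergence with compactly supported test functions and an inclusion--exclusion truncation uniform in $m$; this uniformity is an additional burden your sketch does not address. Second, the Borel--Cantelli argument along a geometric subsequence together with ``monotonicity'' to interpolate in $N$ works for the pair correlation of monomials as in Rudnick--Sarnak, but for $m\geq 3$ one needs a quantitative variance bound of the shape $O(N^{-\delta})$ uniformly over the relevant frequency ranges, which is exactly the unproven counting input; without it the interpolation step has nothing to interpolate. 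In short, the proposal describes the believed route to the conjecture but proves none of the statements that make that route viable.
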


That is, the gap distribution of a degree $q$ polynomial with typical leading coefficient convergences to the distribution of waiting times of a Poisson process with intensity one. Sinai \cite{Sinai} pointed out that, for quadratic polynomials (with random coefficients), high moments of the fine-scale distribution do not converge; this is of course not in contradiction with Conjecture \ref{gapcon}, which only concerns weak convergence. Pellegrinotti on the other hand pointed out that the first four moments do converge to the Poisson distribution \cite{Pellegrinotti}.

The validity of Conjecture \ref{gapcon} is completely open. It is known that the conjecture cannot hold for every irrational $\alpha_q$. The general belief, however, is that $Q$ includes {\em every} irrational number of Diophantine type $2+\epsilon$, for any $\epsilon>0$ (see \eqref{def:dio} below); this includes for instance all algebraic numbers \cite{RudnickSarnak}. The only result to-date on the gap distribution, due to Rudnick, Sarnak and Zaharescu \cite{RudnickSarnakZaharescu}, holds for quadratic polynomials, and establishes the convergence in \eqref{eq:congap} along subsequences for leading coefficients $\alpha_2$ that are well approximable by rationals. Unfortunately, for these $\alpha_2$ we cannot expect convergence along the full sequence, as they do not have the required Diophantine type.



A more accessible fine-scale statistics is the pair correlation measure $R_{2,N}(\,\cdot\,;\varphi)$ which, for any bounded interval $A\subset\mathbb{R}$, is defined by
\begin{equation}\label{eq:R2Def}
R_{2,N}(A;\varphi) 
= \frac{\#\left\{ (i,j) \mid i\neq j\leq N, \; \varphi(i)-\varphi(j) \in N^{-1} A + \mathbb{Z} \right\}}{N} .
\end{equation}
Rudnick and Sarnak proved that the pair correlation measure of polynomials $P(z)=\alpha z^q$ converges for almost every $\alpha$ to Lebesgue measure, the pair correlation measure of a Poisson point process.

\begin{thm}[Rudnick-Sarnak \cite{RudnickSarnak}]\label{thm:pair}
Let $q\ge 2$. There is a set $Q\subset\mathbb{R}$ of full Lebesgue measure, such that for $\varphi(z)=\alpha z^q$, $\alpha\in Q$, and any bounded interval $A\subset\mathbb{R}$,
\begin{equation}\label{eq:conpair}
\lim_{N\to\infty} R_{2,N}(A;\varphi) = |A| .
\end{equation}
\end{thm}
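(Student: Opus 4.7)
The plan is to unfold the pair-correlation sum via Poisson summation and extract the Poisson limit from a variance estimate combined with Borel-Cantelli. First, by a standard approximation argument (sandwiching the indicator of $A$ between two Schwartz functions with compactly supported Fourier transforms) it suffices to establish the convergence $R_{2,N}(f;\varphi)\to\hat f(0)$ for a countable dense family of nonnegative Schwartz test functions $f$ whose Fourier transform $\hat f$ is supported in a fixed interval $[-M,M]$. Detecting $\varphi(i)-\varphi(j)\in N^{-1}A+\mathbb Z$ via the periodization $\sum_{l\in\mathbb Z}f(N(\varphi(i)-\varphi(j)+l))$ and Poisson-summing in $l$ yields
\[
R_{2,N}(f;\varphi) = \hat f(0) + O(N^{-1}) + \frac{1}{N^2}\sum_{0<|m|\leq MN}\hat f(m/N)\bigl(|S_m(\alpha)|^2-N\bigr),
\]
where $S_m(\alpha):=\sum_{n=1}^N e(m\alpha n^q)$. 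The problem reduces to showing that the error
\[
\mathcal E_N(\alpha) := \frac{1}{N^2}\sum_{0<|m|\leq MN}\hat f(m/N)\bigl(|S_m(\alpha)|^2-N\bigr)
\]
tends to zero for almost every $\alpha$.

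Because $\int_0^1(|S_m(\alpha)|^2-N)\,d\alpha=0$ for $m\neq 0$, the function $\mathcal E_N$ is centred on $[0,1]$, and the natural next step is to bound its variance. Expanding and using orthogonality,
\[
\int_0^1|\mathcal E_N(\alpha)|^2\,d\alpha = \frac{1}{N^4}\sum_{m_1,m_2}\hat f(m_1/N)\overline{\hat f(m_2/N)}\,\#\mathcal Q(m_1,m_2),
\]
where $\mathcal Q(m_1,m_2)$ is the set of quadruples $(i_1,j_1,i_2,j_2)\in\{1,\dots,N\}^4$ with $i_k\neq j_k$ and $m_1(j_1^q-i_1^q)=m_2(j_2^q-i_2^q)$. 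I would bound this count by fixing $(i_1,j_1,i_2,j_2)$, setting $a=j_1^q-i_1^q$, $b=j_2^q-i_2^q$, and counting integer pairs $(m_1,m_2)$ on the line $m_1 a=m_2 b$ with $|m_k|\leq MN$: the number of such pairs is $\ll MN\,\gcd(a,b)/\max(|a|,|b|)$. Summing over $(i_1,j_1,i_2,j_2)$ and invoking divisor-moment estimates for the values of $j^q-i^q$ (which factor, in the simplest $q=2$ case, as $(j-i)(j+i)$) one expects a polynomial saving
\[
\int_0^1|\mathcal E_N(\alpha)|^2\,d\alpha \;\ll_{f,M}\; N^{-\delta}
\]
for some $\delta>0$.

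With the variance bound in hand, Chebyshev's inequality combined with Borel-Cantelli, applied to a sparse polynomial subsequence $N_k=\lfloor k^K\rfloor$ with $K$ chosen large enough so that $\sum_k N_k^{-\delta}$ converges, produces a full-measure set of $\alpha$ on which $\mathcal E_{N_k}(\alpha)\to 0$. To upgrade convergence from the subsequence to the full sequence, I interpolate: for $N_k\leq N<N_{k+1}$ one controls $|R_{2,N}(f;\varphi)-R_{2,N_k}(f;\varphi)|$ by an elementary monotonicity argument together with the fact that $N_{k+1}/N_k\to 1$. A countable intersection of null sets (over a dense family of $f$), followed by inner/outer approximation to pass from smooth $f$ to indicators of intervals $A$, produces the claimed full-measure set $Q\subset\mathbb R$.

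The principal obstacle is the Diophantine count $\#\mathcal Q(m_1,m_2)$ in the second paragraph. The saving hinges on the fact that $j^q-i^q$ factors and has few divisors on average, so that off-diagonal quadruples contribute strictly less than the diagonal $(i_k=j_k)$ and the variance beats the trivial bound. For a genuine polynomial $\varphi(z)=\alpha_q z^q+\alpha_{q-1}z^{q-1}+\cdots$ the differences $\varphi(j)-\varphi(i)$ no longer enjoy this clean multiplicative structure, which is precisely why Theorem~\ref{thm:pair} is confined to the monomial case and why the more delicate analysis of the present paper is needed for the general quadratic.
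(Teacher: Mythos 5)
This theorem is not proved in the paper at all --- it is quoted from Rudnick--Sarnak \cite{RudnickSarnak} --- and your outline reconstructs essentially that original argument: Fourier expansion of the smoothed pair correlation, an $L^2$ variance bound in $\alpha$ obtained by counting solutions of $m_1(i_1^q-j_1^q)=m_2(i_2^q-j_2^q)$ via gcd and divisor estimates, Borel--Cantelli along a polynomially sparse sequence $N_k$, and interpolation to all $N$ using $N_{k+1}/N_k\to 1$ together with monotonicity in the (sandwiching) test functions. The outline is correct as a proof strategy; the only slips are cosmetic (a minorant with compactly supported Fourier transform cannot be everywhere nonnegative, though only monotonicity of $R_{2,N}$ in the test function is actually needed, and the divisor-moment estimate you defer is exactly the technical core carried out in \cite{RudnickSarnak}).
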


It is currently not known whether any Diophantine condition on $\alpha$ will guarantee the convergence in \eqref{eq:conpair}. In the quadratic case $q=2$, Heath-Brown \cite{HeathBrown} gave an explicit construction of $\alpha$'s such that \eqref{eq:conpair} holds. The question whether \eqref{eq:conpair} holds for Diophantine $\alpha$ (which holds, e.g., for $\alpha=\sqrt2$), however, remains open to this day. See Truelsen \cite{Truelsen} for a conditional result, and Marklof-Str\"ombergsson \cite{MarklofStrombergsson} for a derivation of \eqref{eq:conpair} from a geometric equidistribution result. Boca and Zaharescu \cite{BocaZaharescu} generalized Theorem \ref{thm:pair} to $\varphi(z)=\alpha f(z)$, where $f$ is any polynomial of degree $q$ with integer coefficients. 

Zelditch proved an analogue of Theorem \ref{thm:pair} for the sequence of polynomials
\begin{equation}
\varphi_N(z)=\alpha N f\left(\frac{z}{N}\right) + \beta z,
\end{equation}
where $f$ is a fixed polynomial satisfying $f''\neq 0$ on $[-1,1]$.
Surprisingly, the pair correlation problem for $\varphi_N$ seems harder than the case of fixed polynomials $\varphi$, and requires an additional Ces\`aro average:

\begin{thm}[Zelditch {\cite[add.]{Zelditch}}]\label{thm:pair2}
There is a set $Q\subset\mathbb{R}^2$ of full Lebesgue measure, such that for $(\alpha,\beta)\in Q$, and any bounded interval $A\subset\mathbb{R}$,
\begin{equation}\label{eq:conpairZelditch}
\lim_{M\to\infty} \frac1M \sum_{N=1}^M R_{2,N}(A;\varphi_N) = |A| .
\end{equation}
\end{thm}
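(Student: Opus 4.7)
The plan is to follow the Fourier-analytic, second-moment route of Rudnick--Sarnak's proof of Theorem \ref{thm:pair}, exploiting the extra Cesàro average in $N$ that is absent in the fixed-$\varphi$ setting. By a standard sandwiching argument it suffices to prove convergence for a smoothed pair correlation
\begin{equation*}
R_{2,N}(\psi;\varphi_N) = \frac{1}{N}\sum_{\substack{i,j\leq N\\ i\neq j}}\sum_{k\in\mathbb{Z}}\psi\bigl(N[\varphi_N(i)-\varphi_N(j)]-Nk\bigr),
\end{equation*}
where $\psi$ is a non-negative Schwartz function with compactly supported Fourier transform $\hat\psi$. Poisson summation in $k$ yields
\begin{equation*}
R_{2,N}(\psi;\varphi_N) = \hat\psi(0)\,\frac{N-1}{N} + E_N(\psi), \qquad E_N(\psi) = \frac{1}{N^2}\sum_{m\neq 0}\hat\psi(m/N)\bigl(|S_N(m)|^2-N\bigr),
\end{equation*}
with $S_N(m)=\sum_{j=1}^N e(m\varphi_N(j))$. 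Since $\hat\psi(0) = \int\psi$, the theorem reduces to showing $M^{-1}\sum_{N=1}^M E_N(\psi)\to 0$ almost surely in $(\alpha,\beta)$.

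The core of the argument is a variance estimate on compact boxes $K\subset\mathbb{R}^2$. Expanding $|S_N(m)|^2 = \sum_{a,b\leq N} e(m[\varphi_N(a)-\varphi_N(b)])$ and cancelling the diagonal $a=b$ against the $-N$ term, the integral $\int_K|M^{-1}\sum_{N\leq M}E_N(\psi)|^2\,d\alpha\,d\beta$ becomes a quadruple sum over indices $(a_i,b_i,m_i,N_i)_{i=1,2}$ with $a_i\neq b_i$, weighted by the $K$-integral of an exponential in the two linear forms
\begin{equation*}
L_\beta = m_1(a_1-b_1) - m_2(a_2-b_2), \qquad L_\alpha = m_1 N_1[f(a_1/N_1)-f(b_1/N_1)] - m_2 N_2[f(a_2/N_2)-f(b_2/N_2)].
\end{equation*}
This integral factors as a product of two sinc-type kernels localizing $(L_\alpha,L_\beta)$ near $(0,0)$. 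The hypothesis $f''\neq 0$ on $[-1,1]$ makes the map $(a,b)\mapsto(a-b,\,N[f(a/N)-f(b/N)])$ injective up to the swap $a\leftrightarrow b$, so for each fixed $(N_1,N_2,m_1,m_2,a_1,b_1)$ the number of admissible $(a_2,b_2)$ in the resonance set is uniformly bounded. Combined with the support constraint $|m_i|\ll N_i$ from $\hat\psi$ and the external $M^{-2}$ factor, this should yield a variance bound of the form $O(M^{-\delta})$ for some $\delta>0$, or at least one summable along a polynomial subsequence $M_k$; Borel--Cantelli, bridging consecutive $M_k$ by monotonicity, and an exhaustion of $\mathbb{R}^2$ by countably many $K$, then produce the full-measure set $Q$.

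The main obstacle is the analysis of the resonance set. For individual $N$ the exponential sum $S_N(m)$ can reach its Gauss-sum size $\sqrt{N}$ whenever $\alpha m/N$ is well approximated by a rational with small denominator, so $E_N(\psi)$ need not be small pointwise; the theorem simply cannot hold without the Cesàro mean in $N$, and one must quantify how sparse such resonant $N\leq M$ are and, more delicately, how often pairs $(N_1,N_2)$ can simultaneously be resonant in a compatible way. It is here that the non-vanishing curvature of $f$ earns its keep: it provides enough genuine oscillation in $\alpha$ to prevent conspiracies between $N_1$ and $N_2$ from surviving in the variance.
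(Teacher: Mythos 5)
First, a point of orientation: the theorem you were asked to prove is not proved in this paper at all --- it is Zelditch's result, quoted from the addendum to \cite{Zelditch} as background, and the paper's own contribution (Theorem \ref{thm:MainThm}) replaces the almost-everywhere statement by an explicit Diophantine condition via an entirely different route (Jacobi theta sums and effective Ratner equidistribution). So your proposal should be measured against Zelditch's argument, and in spirit it coincides with it: smoothing, Poisson summation, reduction to $E_N(\psi)$, a second-moment bound for $M^{-1}\sum_{N\le M}E_N$ over a compact parameter box, with the $\beta$-integration forcing $m_1(a_1-b_1)\approx m_2(a_2-b_2)$, the curvature hypothesis $f''\neq 0$ controlling the $\alpha$-oscillation, and Borel--Cantelli along a polynomial subsequence followed by a monotonicity bridge. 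That is the correct skeleton, and the Ces\`aro average over $N$ enters exactly where you say it does.

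The one step that is wrong as stated is your resonance count. Since the $K$-integration only localizes to $|L_\alpha|\lesssim 1$, $|L_\beta|\lesssim 1$ rather than to exact equalities, injectivity of $(a,b)\mapsto\bigl(a-b,\,N[f(a/N)-f(b/N)]\bigr)$ does not give a bounded number of admissible $(a_2,b_2)$: with $d_2=a_2-b_2$ fixed, the function $g(a_2)=N_2[f(a_2/N_2)-f((a_2-d_2)/N_2)]$ has increments of size $\asymp |d_2|/N_2$, so the condition $|m_2 g(a_2)-c|\lesssim 1$ admits on the order of $1+N_2/|m_2 d_2|$ values of $a_2$, which is unbounded when $|m_2d_2|$ is small. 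The argument survives, but only after carrying this factor: summing $d(m_1d_1)\bigl(1+N_2/|m_1d_1|\bigr)$ over $|m_1|\ll N_1$, $0<|d_1|\le N_1$ with divisor-function bounds gives a count $\ll M^{3+\epsilon}$ for each pair $(N_1,N_2)$, hence a variance $\ll M^{-1+\epsilon}$ after the $N_1^{-2}N_2^{-2}$ normalization, which is what you need for $M_k=k^2$; the terms with $L_\beta\neq 0$ are handled the same way using the extra $|L_\beta|^{-1}$ decay. Two smaller repairs: a Selberg-type minorant of $\chi_A$ is not non-negative, so apply the monotonicity bridge to the non-negative sharp counts $\sum_{N\le M}R_{2,N}(A;\varphi_N)$ (sandwiching by the smoothed quantities only at the scales $M_k$), not to the smoothed sums themselves; and your parenthetical claim that the theorem ``cannot hold'' without the Ces\`aro mean is unproved --- indeed the present paper conjectures pointwise convergence for its special quadratic family --- what is true is only that this second-moment method does not reach the pointwise statement.
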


Slightly weaker results hold when $f$ is only assumed to be a smooth function \cite{Zelditch}. The motivation for the particular choice of $\varphi_N$ is that the values $\{\varphi_N(j)\}_{j=1}^N$ represent the eigenphases of quantized maps \cite{Zelditch}; the understanding of their statistical distribution is one of the central challenges in quantum chaos \cite{BerryTabor,MarklofECM}. Zelditch and Zworski have found analogous results for scattering phase shifts \cite{ZelditchZworski}.

The aim of the present paper is to improve Theorem \ref{thm:pair2} for the specific example
\begin{equation}\label{ours}
\varphi_N(z)=\frac{(z-\alpha)^2}{2N} .
\end{equation}
The corresponding triangular array $\{ \varphi_N(j)\}_{1\le j\le N,\, 1\le N<\infty}$, is uniformly distributed mod 1: For any bounded interval $A\subset\mathbb{R}$,
\begin{equation}\label{eq:mod1Phi}
\lim_{N\to\infty} \frac{\#\left\{ j\leq N \mid \varphi_N(j) \in A +\mathbb{Z} \right\}}{N} = |A|.
\end{equation}
Appendix \ref{sec:Uniform Distribution of} comprises a precise bound on the discrepancy.

We will establish convergence of the pair correlation measure under an explicit Diophantine condition on $\alpha$, rather than convergence almost everywhere as in Theorem \ref{thm:pair2}. Furthermore, we will shorten the Ces\`aro average and provide a power-saving in the rate of convergence.

\begin{figure}
	\includegraphics[scale=0.28]{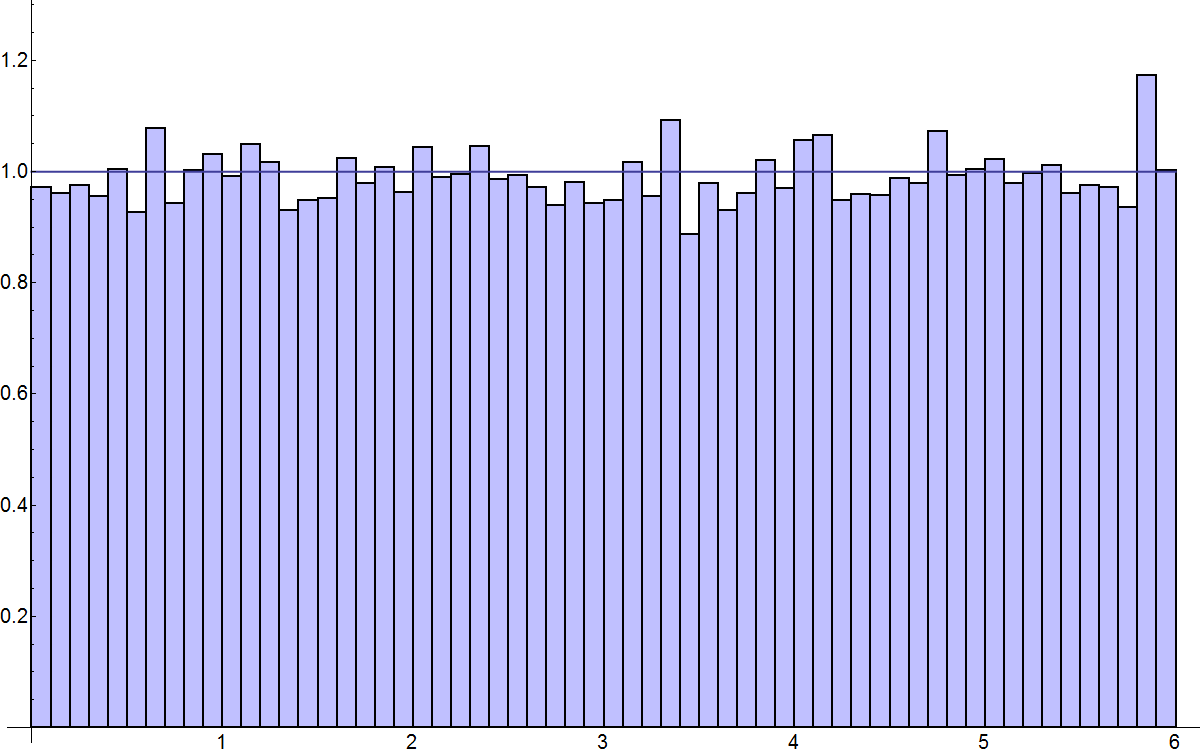}
	\caption{Pair correlation of $\left\{\frac{(n-\sqrt{2})^{2}}{2N}\right\}_{n=1}^N$ mod $ 1 $, with $ N = 5000 $.}\label{fig:pair}
\end{figure}

\begin{figure}
	\includegraphics[scale=0.28]{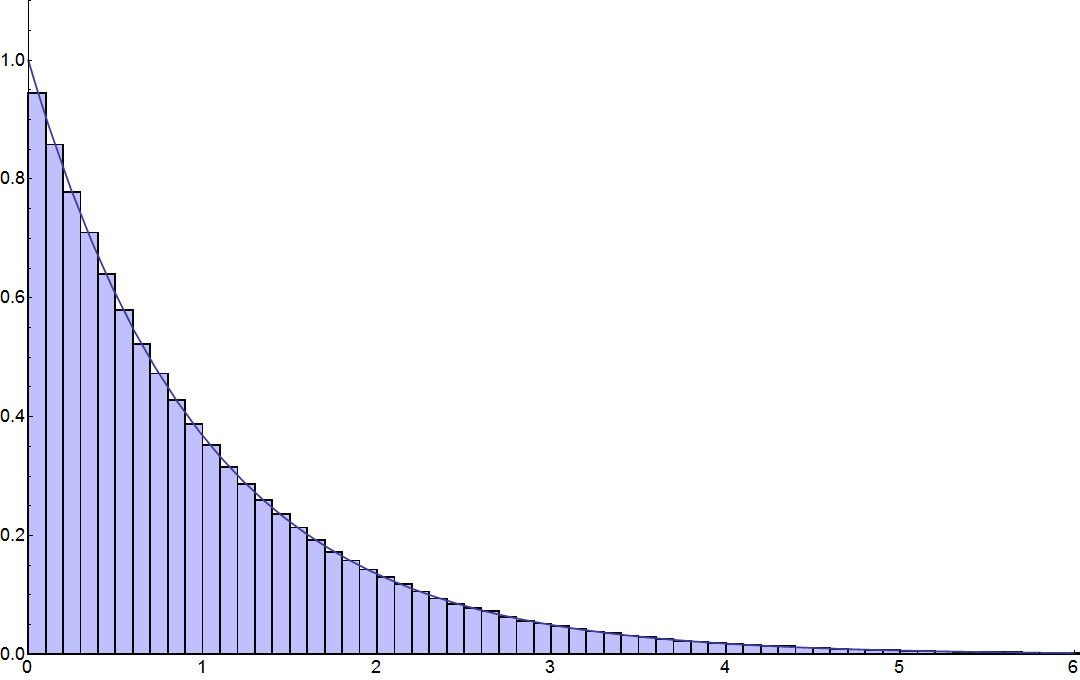}
	\caption{Gap distribution of $\left\{\frac{(n-\sqrt{2})^{2}}{2N}\right\}_{n=1}^N$ mod $ 1 $, with $ N = 10^6 $.}\label{fig:gap}
\end{figure}

An irrational $\alpha\in\mathbb{R}$ is said to be {\em Diophantine}, if there exists $\kappa,C>0$ such that
\begin{equation}\label{def:dio}
\left|\alpha-\frac{p}{q}\right|>\frac{C}{q^{\kappa}}
\end{equation}
for all $p\in\mathbb{Z},\,q\in\mathbb{N}$. The constant $\kappa$ is called the {\em Diophantine type} of $\alpha$; by Dirichlet's theorem,
$\kappa\ge2$. Every quadratic surd, and more generally every $\alpha$ with bounded continued fraction expansion, is of Diophantine type $\kappa=2$.

\begin{thm}
\label{thm:MainThm} Choose $\varphi_N$ as in \eqref{ours}, with $\alpha$ Diophantine of type $\kappa$,
and fix $$\eta\in\left( \max\left(\frac{17}{18},1-\frac{2}{9\kappa}\right),1\right].$$
There exists $s=s(\kappa,\eta)>0$ such that for any bounded interval $A\subset\mathbb{R}$
\begin{equation} \label{eq:pair3}
\frac{1}{M^{\eta}}\sum_{M\le N\le M+M^{\eta}}R_{2,N}(A;\varphi_N)=|A| +O\left(M^{-s}\right)
\end{equation}
as $M\to\infty$ (the implied constant in the remainder depends on $\alpha,\eta,A$).
\end{thm}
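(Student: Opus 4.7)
The plan is to reduce the pair correlation to a second moment of quadratic Weyl sums via Poisson summation, and then combine the short Ces\`aro average in $N$ with the Diophantine condition on $\alpha$ to extract power-saving cancellation.

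\emph{Step 1 (Fourier reduction).} I would sandwich $\mathbf 1_A$ between smooth functions $f_\pm$ at a suitable scale $M^{-\delta}$ (for a small $\delta>0$ to be optimised) and apply Poisson summation to the $\mathbb Z$-shifts in \eqref{eq:R2Def}. This yields
\[
R_{2,N}(f;\varphi_N)=\hat f(0)\!\left(1-\tfrac1N\right)+\frac{1}{N^2}\sum_{k\neq 0}\hat f\!\left(\tfrac{k}{N}\right)\!\bigl(|S_N(k)|^2-N\bigr)+O(N^{-A}),
\]
where $S_N(k)=\sum_{n=1}^N e\bigl(k(n-\alpha)^2/(2N)\bigr)$ is a linear-phase Gauss sum. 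The diagonal produces $|A|$ up to a smoothing error polynomial in $\delta$, so the task reduces to bounding the Ces\`aro average of the $k\neq 0$ contribution by $O(M^{-s})$ for $N\in[M,M+M^{\eta}]$.

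\emph{Step 2 (Expanding the Weyl sum).} Opening the square and setting $r=m-n$ gives
\[
|S_N(k)|^2-N=2\operatorname{Re}\sum_{r=1}^{N-1}e\!\left(\tfrac{kr(r-2\alpha)}{2N}\right)\sum_{n=1}^{N-r}e\!\left(\tfrac{krn}{N}\right).
\]
The inner geometric sum equals $N-r$ exactly when $N\mid kr$ and is bounded by $\|kr/N\|^{-1}$ otherwise. Writing $d=\gcd(k,N)$, $k=dk'$, $N=dN'$, the \emph{resonant} values of $r$ are the multiples $\ell N'$, $1\le\ell<d$, where the contribution telescopes into a one-dimensional Weyl sum $\sum_{\ell}(\pm 1)(N-\ell N')\,e(-\alpha k'\ell)$. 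The non-resonant part I would handle using the standard estimate for $\sum_r\|kr/N\|^{-1}$ refined through the continued-fraction expansion of $k/N$; averaging over a window of length $M^{\eta}$ then rules out the anomalous $N$ for which $k/N$ admits an exceptionally small denominator, and a dyadic/van der Corput decomposition delivers a power saving once $\eta>17/18$.

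\emph{Step 3 (Resonant contribution, main obstacle).} The crux is the resonant piece, which after Ces\`aro averaging takes the shape
\[
\frac{1}{M^{\eta}}\!\sum_{N\in[M,M+M^{\eta}]}\frac{1}{N}\sum_{\substack{k\neq 0\\ d=\gcd(k,N)\ge 2}}\hat f\!\left(\tfrac{k}{N}\right)\!\bigl(W_1(d,k')-\tfrac1d\,W_2(d,k')\bigr),
\]
where $W_1,W_2$ are one-dimensional Weyl sums in $\ell\in[1,d)$ at frequency $\alpha k'$. The Diophantine hypothesis gives $\|\alpha k'\|\gg k'^{1-\kappa}$, whence $|W_i|\ll\min(d,k'^{\kappa-1})$; inserted termwise this only furnishes an $O(\log M)$ bound, so the power saving must come from the $N$-average. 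I would decompose dyadically in $d$: for $d\in[D,2D]$ the set of $N\in[M,M+M^{\eta}]$ divisible by $d$ has relative density $\approx 1/d$, providing a sieve-type saving. Combining this with the Diophantine lower bound and a case split depending on whether $D$ is large or small compared with a suitable power of $k'$ should yield the net power saving. The main obstacle is coupling these two cancellation mechanisms without losing one against the other: the thresholds $\eta>17/18$ and $\eta>1-2/(9\kappa)$ are precisely the points at which each mechanism alone becomes marginal, and the interplay between them is what ultimately forces the choice of $\eta$. Optimising the smoothing scale $\delta$, the Fourier cut-off $|k|\ll M^{1+\delta}$, and the dyadic ranges in $d$ and in $k'$ will finally produce an explicit exponent $s=s(\kappa,\eta)>0$.
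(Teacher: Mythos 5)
Your Step 1 is sound and parallels the paper's opening move (smooth the cutoffs, Poisson-sum over the integer shifts, reduce to an averaged second moment of the sums $S_N(k)=\sum_{n\le N}e\bigl(k(n-\alpha)^2/(2N)\bigr)$). The gap is in Steps 2--3, which are supposed to carry all of the analytic weight but do not contain a workable mechanism for the required power saving. For the non-resonant part, the bound $\bigl|\sum_{n\le N-r}e(krn/N)\bigr|\ll \|kr/N\|^{-1}$ followed by summation over $r$ gives $\asymp N\log N$ for \emph{typical} $k$ and $N$ (write $d=\gcd(k,N)$: each nonzero residue class mod $N/d$ is hit about $d$ times, so the sum is $\asymp N\log N$ regardless of the continued fraction of $k/N$). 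After summing over the $\asymp NM^{\delta}$ relevant frequencies $k$ and dividing by $N^{2}$ this is larger than the main term by a factor of order $N$; so the saving cannot come from excluding ``anomalous $N$ with small denominators'' — it must come from the quadratic oscillation $e\bigl(kr(r-2\alpha)/(2N)\bigr)$ played against the phase of the geometric sum, i.e.\ from Gauss-sum reciprocity and then from genuine equidistribution of the resulting linear phases $k\alpha$ weighted by the divisor structure of $N$. That last equidistribution statement, with a power-saving rate uniform over the window $N\in[M,M+M^{\eta}]$, is exactly the hard core of the problem; your sketch invokes ``continued fractions, a dyadic/van der Corput decomposition and the $N$-average'' without exhibiting the cancellation, and likewise your Step 3 explicitly leaves the coupling of the density-$1/d$ saving with the Diophantine bound $\|k'\alpha\|\gg k'^{1-\kappa}$ as an unresolved obstacle. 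Note also that the thresholds $\eta>\max\bigl(\frac{17}{18},1-\frac{2}{9\kappa}\bigr)$ are not natural barriers for an exponential-sum argument: in the paper they arise from the exponent $\min(1/2,2/\kappa)$ in Str\"ombergsson's effective Ratner theorem together with the growth $M^{8(1-\eta)}$ of the $C^{8}$-norm of the test function localizing $N$ in the window, so asserting that your elementary decomposition reproduces precisely these thresholds is unsupported.

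For comparison, the paper's route after your Step 1 is structurally different: the smoothed second moment $Q_N(\nu,h)$ is rewritten as a sum of $|\Theta_h|^2$ (Jacobi theta sums) evaluated at the points $\frac{k}{N}+\frac{i}{N^2}$ with shift $\alpha$; the $\tilde\Gamma$-invariance and the cusp expansion of $|\Theta_h|^2$ replace your reciprocity/Gauss-sum step, the divisor ranges $\sigma\mid N$ with $\sigma$ large (your ``resonant'' regime) are discarded using the Diophantine condition via Marklof's counting lemma (Lemma \ref{lem:MarklofLemma}), and the remaining sum over $k$ coprime to $N/\sigma$, averaged over the window in $N$, is identified with a sum over $(c,d)=1$ sampled along a closed horocycle lifted to $\Gamma\backslash(\mathrm{SL}(2,\mathbb{R})\ltimes\mathbb{R}^2)$, where the effective Ratner equidistribution theorem of Str\"ombergsson supplies the power saving (Propositions \ref{prop:equi_prop} and \ref{prop:KeyProp}). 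If you wish to pursue an elementary proof you would need, at minimum, a substitute for that effective equidistribution input; as it stands, the proposal restates the problem rather than solving it.
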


Theorem \ref{thm:MainThm} extends to more general pair correlation measures, see Appendix \ref{sec:MoreGeneral} for a discussion. Numerical experiments (see Fig.~\ref{fig:pair}) seem to suggest that the average over $N$ in \eqref{eq:pair3} might in fact not be necessary in \eqref{eq:pair3}, and that furthermore the gap distribution is exponential (Fig.~\ref{fig:gap}). 

The proof of this statement reduces to a natural equidistribution theorem on a three-dimensional Heisenberg manifold, which we show derives from Str\"ombergsson's recent quantitative Ratner equidistribution result on the space of affine lattices. Assuming that a more subtle equidistribution result holds (Conjecture \ref{conj:EquiConj}), we can remove the average over $N$ in \eqref{eq:pair3} and obtain the limit $R_{2,N}(A;\varphi_N)\to |A|$; see Proposition \ref{prop:ConjecturedProp}.

The fact that $ \alpha $ is irrational and badly approximable is a crucial assumption. In the case $ \alpha=0 $, the gap distribution reduces to the distribution of spacing between quadratic residues mod $ 2N $, which was proved by Kurlberg and Rudnick \cite{KurlbergRudnick} to be Poisson along odd, square-free and highly composite $ N $'s.

If we do not insist on a rate of convergence, and also allow for a large average,  Theorem \ref{thm:MainThm} holds in fact for the more general polynomials
\begin{equation}\label{ours2}
\varphi_N(z)=\beta\; \frac{(z-\alpha)^2}{2N} ,
\end{equation}
where $\beta\neq 0$ is Diophantine and $\alpha\in\mathbb{R}$ arbitrary, or $\beta\neq 0$ arbitrary and $\alpha\in\mathbb{R}$ Diophantine. We will show in Appendix \ref{sec:LongAverages} that under these assumptions
\begin{equation}
\lim_{N\to\infty} \frac{1}{M}\sum_{M\le N\le 2M}R_{2,N}(A;\varphi_N)=|A| .
\end{equation}
This statement is an immediate corollary of the quantitative Oppenheim conjecture for quadratic forms of signature $(2,2)$, first proved by Eskin, Margulis and Mozes \cite{EskinMargulisMozes} for homogeneous forms,
and generalized to inhomogeneous forms by Margulis and Mohammadi \cite{MargulisMohammadi}.

We conclude this introduction by briefly describing the relevance of our results to the theory of quantum chaos. The values $\{\varphi_N(j)\}_{j=1}^N$ for $\varphi_N$ as in \eqref{ours}, or more generally \eqref{ours2}, represent the eigenphases of a particularly simple quantum map studied by De Bièvre, Degli Esposti and Giachetti \cite[\S4B]{DeBievreEspostiGiachetti}: a quantized shear of a toroidal phase space with quasi-periodic boundary conditions, where $\beta$ quantifies the shear-strength and $\alpha$ is related to the quasi-periodic boundary condition. Theorem \ref{thm:MainThm} proves that the average two-point spectral statistics of this quantum map are Poisson. 

The spacings in the sequence $\{\varphi_N(j)\}_j$ furthermore represent the quantum energy level spacings of a class of integrable Hamiltonian systems: a particle on a ring with quasi-periodic boundary condition (representing a magnetic flux line through its center), coupled to a harmonic oscillator. The corresponding energy levels are (in suitable units) $E_{j,k}(\hbar)=\frac \beta 2 \hbar^2 (j-\alpha)^2 +\hbar (k-\frac12)$, where $ j\in\mathbb Z$, $k\in \mathbb N$ and $\beta>0$. 
It is a short exercise to show that, after rescaling by $\hbar$, the energy levels $E_{j,k}(\hbar)\in [\overline E-\frac{\hbar}{2},\overline E-\frac{\hbar}{2})$ have the same spacing statistics as $\varphi_N(j)\mod 1$ for $\hbar=N^{-1}$ and $|j-\alpha|< \left(\frac{2\overline{E}} {\beta} \right)^{1/2} N$. The index range is thus not $j\leq N$ as in \eqref{eq:R2Def}; cf.~Appendices \ref{sec:MoreGeneral} and \ref{sec:LongAverages} for the relevant generalizations.

Berry and Tabor \cite{BerryTabor} conjectured that typical integrable quantum systems should have Poisson statistics in the semiclassical limit $\hbar\to 0$, and the results presented in this study may therefore be viewed as further evidence for the truth of the Berry-Tabor conjecture. Other instances in which the conjecture could be rigorously established are reviewed in \cite{MarklofECM}.

\subsection*{Acknowledgements}
We thank Zeév Rudnick for helpful comments. The research leading to these results has received funding from the European Research Council under the European Union's Seventh Framework Programme (FP/2007-2013) / ERC Grant Agreement n. 291147.

\section{Outline of the proof: Smoothing and geometric equidistribution}
 
We will first prove a smooth version of Theorem \ref{thm:MainThm}. Let $ \mathcal{S}(\mathbb{R}) $ be the Schwartz space of rapidly decreasing smooth functions on $ \mathbb{R} $, equipped with the norms \[
\left\Vert f\right\Vert _{C^{k}}=\sum_{0\le l\le k}\left\Vert f^{\left(l\right)}\right\Vert _{\infty},
\] and denote \[ \hat{f}(\xi)= \int_{-\infty}^\infty{f(x)e(-x \xi)}\,\mbox{d}x \] the Fourier transform of $ f\in\mathcal{S}(\mathbb{R})$, where we use the standard notation $e\left(x\right)=\exp\left(2\pi ix\right)$. For $ f,h\in \mathcal{S}(\mathbb{R})  $, define the smooth pair correlation function
\begin{equation} \label{eq:smoothR2}
R_{2,N}\left(f,h;\varphi\right)=\frac{1}{N}\sum_{m\in\mathbb{Z}}\sum_{i,j\in\mathbb{Z}}h\left(\frac{i-\alpha}{N}\right)h\left(\frac{j-\alpha}{N}\right)f\left(N\left(\varphi(i)-\varphi(j)+m\right)\right);
\end{equation}
for technical reasons, introducing the shifts by $ \alpha $ inside $h$ in (\ref{eq:smoothR2}) gives a more convenient approximation to the sharp cutoff in (\ref{eq:R2Def}).

Throughout the remainder of this paper we will use $\varphi(z)=\varphi_N(z)=\beta \, \frac{\left(z-\alpha\right)^{2}}{2N}$ with $\beta=1$ as in \eqref{ours}, except for Appendix \ref{sec:LongAverages} where we consider general values of $\beta$.


We prove the following smoothed variant of Theorem \ref{thm:MainThm}:
\begin{thm}
\label{thm:MainThmSmooth}Fix a Diophantine $\alpha$ of type $\kappa$,
and let $\max\left(\frac{17}{18},1-\frac{2}{9\kappa}\right)<\eta \le 1$ be fixed.
There exist $s=s(\kappa,\eta)>0,n=n(\kappa,\eta)\in\mathbb N$ such that for any $\nu\in C^{\infty}\left(\mathbb{R}\right)$ supported on $ \left[-L,L\right] $ $\left(L\ge1\right)$ and
 $\psi,h\in C^{\infty}\left(\mathbb{R}\right)$ real valued and supported on $ \left[-1/2,3/2\right] $,
\begin{alignat*}{1}
\frac{1}{M^{\eta}}\sum_{N\in\mathbb{Z}}\psi\left(\frac{N-M}{M^\eta}\right)R_{2,N}\left(\hat{\nu},h;\varphi_N\right) & = \hat{\psi}(0)\left(\nu\left(0\right)\left|\hat{h}(0)\right|^{2}+\hat{\nu}\left(0\right)\left\Vert h\right\Vert _{2}^{2} \right) \\
 & +O\left(L\left\Vert h\right\Vert _{C^{n}}^2\left\Vert \nu\right\Vert _{C^{2}} \left\Vert \psi \right\Vert _{C^{8}} M^{-s}\right)
\end{alignat*}
as $M\to\infty$ (the implied constant in the remainder depends on $\alpha,\eta$).
\end{thm}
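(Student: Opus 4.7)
The plan is to reduce the smoothed pair correlation to a second moment of a classical Jacobi theta sum, and then to evaluate this moment via a quantitative equidistribution theorem on the space of affine lattices. Writing $\varphi_N(i)-\varphi_N(j)=(i-j)(i+j-2\alpha)/(2N)$ and applying Poisson summation over the shift variable $m\in\mathbb{Z}$ in \eqref{eq:smoothR2}, the sum $\sum_m\hat\nu(N(\varphi_N(i)-\varphi_N(j))+Nm)$ becomes $N^{-1}\sum_{k\in\mathbb{Z}}\nu(k/N)e(-k(i-j)(i+j-2\alpha)/(2N))$. Substituting $r=i-j$ and completing the square,
\begin{equation*}
-\frac{kr\bigl(2(j-\alpha)+r\bigr)}{2N}=-\frac{k(j+r-\alpha)^{2}}{2N}+\frac{k(j-\alpha)^{2}}{2N},
\end{equation*}
decouples the summations over $i=j+r$ and $j$ and yields the clean identity
\begin{equation*}
R_{2,N}(\hat\nu,h;\varphi_N)=\frac{1}{N^{2}}\sum_{k\in\mathbb{Z}}\nu(k/N)\bigl|\Theta_{k,N}\bigr|^{2},\qquad \Theta_{k,N}:=\sum_{j\in\mathbb{Z}}h\!\left(\tfrac{j-\alpha}{N}\right)e\!\left(\tfrac{k(j-\alpha)^{2}}{2N}\right).
\end{equation*}

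The $k=0$ term is $\nu(0)N^{-2}\bigl|\sum_{j}h((j-\alpha)/N)\bigr|^{2}$, and a single application of Poisson summation identifies this with $\nu(0)|\hat h(0)|^{2}$ modulo a Schwartz-class remainder; after the $\psi$-average it produces the first main term $\hat\psi(0)\nu(0)|\hat h(0)|^{2}$. For $k\neq 0$, the sum $\Theta_{k,N}$ is a classical theta sum with quadratic modulus $k/N$ and frequency $-\alpha$, so $N^{-1}|\Theta_{k,N}|^{2}$ is realised as a fixed Siegel-type function $F_{h}$ evaluated at a point $g_{k,N}\Gamma$ of the affine lattice space $\mathrm{ASL}(2,\mathbb{Z})\backslash\mathrm{ASL}(2,\mathbb{R})$, with coordinates parametrised by $(k/N,-\alpha,N)$. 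For fixed $k$, letting $N$ vary over the window $[M,M+M^{\eta}]$ traces out a smooth piece of a unipotent $\{u_{t}\}$-orbit issuing from an initial point determined by $\alpha$.

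The core of the argument is now to apply Str\"ombergsson's quantitative Ratner equidistribution theorem on $\mathrm{ASL}(2,\mathbb{Z})\backslash\mathrm{ASL}(2,\mathbb{R})$ to evaluate the $\psi$-averaged orbit integral of $F_{h}$. The Haar mean of $F_{h}$ equals $\|h\|_{2}^{2}$, so that summing the resulting main term over $k\neq 0$ against the weight $\nu(k/N)/N$, whose Riemann sum approximates $\hat\nu(0)$, produces the second main term $\hat\psi(0)\hat\nu(0)\|h\|_{2}^{2}$. The quantitative remainder in Str\"ombergsson's bound depends on three ingredients: the Sobolev norms of the test function (accounting for the $\|h\|_{C^{n}}^{2}\|\nu\|_{C^{2}}\|\psi\|_{C^{8}}$ prefactor through integration by parts in the theta cutoff, the $\nu$-expansion, and the $\psi$-average), the length $M^{\eta}$ of the orbit window, and the distance of the base point $g_{k,N}\Gamma$ to the cusp of the moduli space.

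The hard part will be controlling the cusp excursion. The base point $g_{k,N}\Gamma$ approaches the cusp precisely when $\alpha$ has an unusually good rational approximation with denominator of size comparable to $N$, so the Diophantine hypothesis of type $\kappa$ is used to bound this excursion by a controlled negative power of $M$. Balancing this cusp penalty against the orbit length $M^{\eta}$ and Str\"ombergsson's explicit exponents produces the constraint $\eta>\max\bigl(17/18,\,1-2/(9\kappa)\bigr)$ and a positive power saving $s=s(\kappa,\eta)>0$. Finally, uniformity over the range $|k|\le LM$ supporting $\nu(\cdot/N)$ is achieved by a dyadic decomposition of $k$, combined with a trivial $L^{2}$ bound on $\Theta_{k,N}$ for the tail of large $|k|$; the linear factor $L$ in the error term reflects the width of this uniformisation.
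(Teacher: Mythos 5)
Your opening reduction coincides with the paper's: Poisson summation in $m$ gives $R_{2,N}(\hat\nu,h;\varphi_N)=\nu(0)N^{-2}\bigl|\sum_j h(\tfrac{j-\alpha}{N})\bigr|^2+Q_N(\nu,h)$, and the $k=0$ term yields $\hat\psi(0)\nu(0)|\hat h(0)|^2$. The gap is in the core step. You fix $k$ and claim the points indexed by $N\in[M,M+M^{\eta}]$ trace a unipotent orbit piece to which Str\"ombergsson's theorem applies, then sum over $k$ as a Riemann sum. Two things go wrong. First, discreteness: for fixed $k$ the relevant points are the $\Gamma$-orbits of $k/N+i/N^{2}$ (translation part built from $\alpha$), and consecutive values of $N$ are spaced about $|k|/M^{2}$ apart in the horocycle parameter, while $|\Theta_h|^{2}$ along the horocycle at height $1/M^{2}$ fluctuates on the Farey scale $1/M^{2}$; so for $|k|\gg1$ the $N$-sum is not close to any orbit integral, and the $\psi$-smoothing does not repair this. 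Second, even granting a continuum approximation, the segment swept for fixed $k$ has parameter length $\approx|k|M^{\eta-2}$, which shrinks unless $|k|$ is nearly of size $M^{2-\eta}$; effective equidistribution of such shrinking segments is essentially the unaveraged problem, which the paper leaves open (Conjecture \ref{conj:EquiConj}). The paper's mechanism is different and is what you are missing: it keeps the sum over $k$ and $N$ together. After lifting by $\tilde\Gamma$, the coprime pairs $(k,N)$ with $N$ in the window are precisely the intersections of the single horocycle $\{(1,\xi)n_+(x)a(1/M):x\in[-L,L]\}$ (with $\xi$ built from $\alpha$) with a thickened transverse section isomorphic to an interval times $\mathbb{H}(\mathbb{Z})\backslash\mathbb{H}(\mathbb{R})$ (Lemmas \ref{Lem:FundamentalDomain2} and \ref{Lem:KeyLemma}), so one application of Theorem \ref{thm:StrombergssonThm} per divisor handles the whole double sum. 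The short $N$-window is encoded in the section test function via the factor $\psi\bigl(M^{1-\eta}(\sigma v^{-1}-1)\bigr)$, whose $C^{8}$-norm costs $M^{8(1-\eta)}$; this, not a "cusp excursion of the base point" as you suggest, is the source of the threshold $\eta>\max(17/18,\,1-2/(9\kappa))$.

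You also never address the unboundedness of the theta function in the cusp, i.e.\ the pairs with large $\gcd(k,N)$: when $\gcd(k,N)=\sigma$ the point $k/N+i/N^{2}$ is equivalent to one at height $\sigma^{2}$, where $|\Theta_h|^{2}$ grows like $\sigma$, so an equidistribution theorem for bounded test functions cannot be applied directly to the full sum. The paper first removes the divisors $\sigma>M^{\delta}$ (Lemma \ref{lem:cuspEst}), and this is where the Diophantine hypothesis on $\alpha$ enters a second time, through Lemma \ref{lem:MarklofLemma} applied to $\sum_m|\hat h(\sigma(m+k\alpha_\sigma))|^{2}$; the other use of the Diophantine condition is in Str\"ombergsson's theorem itself, for the translation vector of the horocycle. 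Your proposed "trivial $L^{2}$ bound for large $|k|$" does not touch this issue (large $|k|$ is already excluded by the support of $\nu$). Since both the section/transversal mechanism and the divisor/cusp estimate constitute the substance of the proof, the proposal as written has genuine gaps.
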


(The specific choice of the interval $[-1/2,3/2]$ in
the above is simply for convenience. The proof generalizes to general
bounded intervals.)

Define
\[
Q_{N}\left(\nu,h\right)=\frac{1}{N}\sum_{k\ne0}\nu\left(\frac{k}{N}\right)\left|\frac{1}{\sqrt{N}}\sum_{n\in\mathbb{Z}}h\left(\frac{n-\alpha}{N}\right)e\left(k\frac{\left(n-\alpha\right)^{2}}{2N}\right)\right|^{2}.
\]

Theorem \ref{thm:MainThmSmooth} will follow from the following proposition,
which is key in this paper:
\begin{prop}
\label{prop:KeyProp}Fix a Diophantine $\alpha$ of type $\kappa$,
and let $\max\left(\frac{17}{18},1-\frac{2}{9\kappa}\right)<\eta \le 1$ be fixed.
There exist $s=s(\kappa,\eta)>0,n=n(\kappa,\eta)\in\mathbb N$ such that for any $\nu\in C^{\infty}\left(\mathbb{R}\right)$ supported on $ \left[-L,L\right] $ $\left(L\ge1\right)$ and
$\psi,h\in C^{\infty}\left(\mathbb{R}\right)$ real valued and supported on $ \left[-1/2,3/2\right] $,

\begin{alignat}{1}\label{eq:PropEq}
\frac{1}{M^{\eta}}\sum_{N\in\mathbb{Z}}\psi\left(\frac{N-M}{M^\eta}\right)Q_{N}\left(\nu,h\right)&=\hat{\psi}(0)\hat{\nu}\left(0\right)\left\Vert h\right\Vert _{2}^{2} \\
&+O\left(L\left\Vert h\right\Vert _{C^{n}}^2\left\Vert \nu\right\Vert _{C^{2}} \left\Vert \psi \right\Vert _{C^{8}} M^{-s}\right). \nonumber
\end{alignat}
as $M\to\infty.$
\end{prop}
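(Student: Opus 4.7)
\textbf{Proof plan for Proposition \ref{prop:KeyProp}.} The inner sum
\[
\Theta_N(k,\alpha;h) := \frac{1}{\sqrt{N}}\sum_{n\in\mathbb{Z}} h\!\left(\frac{n-\alpha}{N}\right) e\!\left(\frac{k(n-\alpha)^{2}}{2N}\right)
\]
is a classical Jacobi theta sum: after setting $y = N^{-2}$ and $x = k/N$, it has the form $\Theta_h(x+iy;\,-\alpha)$ in the notation commonly used by Marklof. The key structural fact is that $|\Theta_h|^{2}$ descends to a (non-smooth, polynomially growing) function $F_h$ on the space of affine lattices $ASL(2,\mathbb{Z})\backslash ASL(2,\mathbb{R})$, whose integral against Haar measure equals $\|h\|_{2}^{2}$. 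So my first step would be to rewrite $Q_N(\nu,h)$ as a weighted sum of values of $F_h$ along a concrete parametric curve $(k,N)\mapsto p(k,N,\alpha)$, and to check the Haar-normalization computation for $F_h$ using Plancherel.

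Next, I would treat the double average over $k$ and $N$ as an equidistribution problem. After an application of Poisson summation in $k$ (to smooth the $\nu(k/N)$ weight) or, alternatively, a direct interchange, the average
\[
\frac{1}{M^{\eta}}\sum_{N}\psi\!\left(\frac{N-M}{M^{\eta}}\right)\frac{1}{N}\sum_{k\neq 0}\nu(k/N)\,F_h\bigl(p(k,N,\alpha)\bigr)
\]
becomes a weighted sum of values of $F_h$ along a curve in $ASL(2,\mathbb{Z})\backslash ASL(2,\mathbb{R})$ that, after applying standard $SL(2,\mathbb{Z})$-reduction at each rational approximation $k/N\approx p/q$, is well-approximated by a short unstable horocycle piece of length $\approx M^{\eta}$ translated by $\alpha$. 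Str\"ombergsson's quantitative Ratner equidistribution theorem for the space of affine lattices then gives equidistribution to Haar measure with a power-saving error, yielding the main term $\hat\psi(0)\hat\nu(0)\|h\|_{2}^{2}$.

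The crucial obstacle will be that $F_h$ is unbounded: it grows polynomially along the cusps of the affine-lattice space, and Str\"ombergsson's theorem in its quantitative form requires either smoothness and compact support, or a careful control of cusp excursions. I would therefore truncate $F_h$ at height $T$ in the cusp neighbourhoods, apply the equidistribution theorem to the truncated test function (gaining a factor depending on its $C^{n}$-norm, hence the appearance of $\|h\|_{C^{n}}^{2}$ on the right-hand side), and then bound the contribution of the cusp tail by estimating how long the orbit of $p(k,N,\alpha)$ can remain above height $T$. This excursion bound is where the Diophantine hypothesis on $\alpha$ enters crucially: a rational $k/N$ with denominator $q$ forces a cusp excursion only if $\alpha$ can be approximated by rationals of denominator comparable to $q$, and \eqref{def:dio} limits this to happen on a set of $k$'s sparse enough to compensate for the growth of $F_h$.

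Finally, I would optimise the truncation height $T=M^{\delta}$, the length $M^{\eta}$ of the average, and Str\"ombergsson's rate exponent against one another. The ranges $\eta>\max(17/18,\,1-2/(9\kappa))$ and the exponent $s=s(\kappa,\eta)>0$ should arise from this optimisation, the condition on $\eta$ reflecting the combined cost of (i) the error in Str\"ombergsson's theorem along horocycle segments of length $M^{\eta}$, (ii) the sum over $k$ up to $LN$, and (iii) the Diophantine control over cusp excursions of type $\kappa$. The dependence on $\|\nu\|_{C^{2}}$ and $\|\psi\|_{C^{8}}$ should come from the smoothing/Poisson-summation steps applied to these weights.
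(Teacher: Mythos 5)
Your toolkit is the right one (theta-sum interpretation of $Q_N$, invariance under $\tilde\Gamma$, Str\"ombergsson's effective Ratner theorem on $\Gamma\backslash G$, Diophantine control of the cusp contribution, and a final optimisation of exponents), and your treatment of the cusp is morally the paper's Lemma \ref{lem:cuspEst}: there the divisor $\sigma=(k,N)$ measures the height of the excursion, and the Diophantine hypothesis enters through sums of the form $\sum_{k}\sum_{m}|\hat h(\sigma(m+k\alpha))|^{2}$ (Lemma \ref{lem:MarklofLemma}), much as you describe. The genuine gap is in the middle step: you never specify the mechanism that turns the \emph{discrete} double sum over $k$ and $N$ into a horocycle integral to which Theorem \ref{thm:StrombergssonThm} applies, and the two mechanisms you float do not work. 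Poisson summation in $k$ for a \emph{fixed} $N$ produces, besides the zero mode (the horocycle integral at height $N^{-2}$), non-zero modes at frequencies $mN$; since $|\Theta_h|^{2}$ at height $N^{-2}$ carries Fourier frequencies $\tfrac12\bigl((n_1-\alpha)^2-(n_2-\alpha)^2\bigr)$ with $n_1,n_2\ll N$, these modes are hit by many lattice pairs and are not small -- controlling them for individual $N$ is essentially the open pointwise problem (Conjecture \ref{conj:EquiConj}), not something Str\"ombergsson's theorem gives you. Relatedly, your geometric picture of ``a short unstable horocycle piece of length $\approx M^{\eta}$ translated by $\alpha$'' is not what occurs: the horocycle actually used has bounded length $\asymp L$ at height $M^{-2}$, translated by the vector $(1/2,\alpha+1/2)$, and the window of length $M^{\eta}$ in $N$ plays a completely different role.

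What the paper does instead, and what your plan is missing, is a transversal (section) argument: the Heisenberg manifold $\mathbb{H}(\mathbb{Z})\backslash\mathbb{H}(\mathbb{R})$ times a $v$-interval embeds in $\Gamma\backslash G$, and after thickening this section in the $n_-$-direction, the points $\bigl(M/c,\,a/c,\,\gamma\eta\bigr)$ arising from the pairs $(k,N)$ in the window are exactly the returns of the single translated horocycle $\Gamma(1,\eta)n_+(x)a(1/M)$ to the thickened section (Lemma \ref{Lem:KeyLemma}), with disjoint return intervals; integrating against $\nu$ converts the discrete sum into the horocycle integral exactly, and only then is Theorem \ref{thm:StrombergssonThm} applied -- to a smooth, compactly supported test function built from $\psi$ and the theta function lifted to height $\sigma^{2}$ (Proposition \ref{prop:equi_prop}). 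The average over $N$ is essential precisely here: it is encoded in the factor $\psi\bigl(M^{1-\eta}(\sigma v^{-1}-1)\bigr)$ of the test function, whose $C^{8}$-norm grows like $M^{8(1-\eta)}$, and balancing this against Str\"ombergsson's error $M^{-\min(1/2,2/\kappa)+\delta}$ is what produces the threshold $\eta>\max(17/18,\,1-2/(9\kappa))$ -- not the length of a horocycle segment, the range of $k$, or the excursion bound, as your plan suggests. Without this (or an equally concrete substitute showing how the $N$-average defeats the non-zero Fourier modes), the proposal does not yield the proposition; note also that $|\Theta_h|^{2}$ is invariant under $\tilde\Gamma$ rather than $\Gamma$, so one must shift $\xi$ by $(1/2,1/2)$ and work with the Diophantine vector $(1/2,\alpha+1/2)$, a point your identification with the affine lattice space glosses over.
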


The strategy of the proof of Proposition \ref{prop:KeyProp} which will be given in Section \ref{sec:Prop2.2Proof}, is first
to interpret the l.h.s. of (\ref{eq:PropEq}) as a smooth sum of
the absolute square of the Jacobi theta sum. Then we establish Proposition \ref{prop:KeyProp} from a geometric equidistribution result, which in turn follows from an effective Ratner equidistribution result due to Strömbergsson \cite{Strombergsson}.

In order to give the exact formulation of our equidistribution result, let $\mu_{H}$ be the Haar measure on the Heisenberg group \[\mathbb{H}\left(\mathbb{R}\right)= \left\{\begin{pmatrix}
1 & u & \xi_1\\
0 & 1 & \xi_2 \\
0 & 0 & 1
\end{pmatrix}: u\in \mathbb{R},\xi=(\xi_1,\xi_2) \in \mathbb{R}^2\right\},\]
normalized such that it induces a probability measure on $\mathbb{H}\left(\mathbb{Z}\right)\backslash\mathbb{H}\left(\mathbb{R}\right)$; the latter is also denoted by $\mu_{H}$. 

Denote the differential operators 
\[
D_{1}=v^{2}\frac{\partial}{\partial u},D_{2}=v\frac{\partial}{\partial v},D_{3}=v \frac{\partial}{\partial\xi_{1}},D_{4}=\frac{u}{v}\frac{\partial}{\partial\xi_{1}}+\frac{1}{v}\frac{\partial}{\partial\xi_{2}}.
\]
For a compactly supported $f=f(v,u,\xi) \in C^{k}\left((0,\infty) \times \mathbb{H}\left(\mathbb{Z}\right)\backslash\mathbb{H}\left(\mathbb{R}\right)\right),$
 define the norm
\[
\left\Vert f\right\Vert _{C^{k}}=\sum_{\operatorname{ord}D\le k}\left\Vert Df\right\Vert _{\infty}
\]
summing over all monomials in $D_{i}$ $\left(1\le i\le4\right)$ of
degree $\le k$.

Recall that a vector $\eta\in\mathbb{R}^{2}$ is called Diophantine
of type $\kappa$, if there exists $C>0$ such that 
\[
\left\Vert \eta-\frac{m}{q}\right\Vert >\frac{C}{q^{\kappa}}
\]
for all $m\in\mathbb{Z}^{2},\,q\in\mathbb{N}$. By Dirichlet's theorem,
$\kappa\ge3/2$.

\begin{prop}\label{prop:equi_prop}
Fix a Diophantine vector $ \eta $ of type $\kappa$, $\delta>0$. Then
for any $f\in C^{8}\left((0,\infty) \times \mathbb{H}\left(\mathbb{Z}\right) \backslash\mathbb{H}\left(\mathbb{R}\right)\right)
$ compactly supported on $ [1/4, \infty) \times \mathbb{H}\left(\mathbb{Z}\right) \backslash\mathbb{H}\left(\mathbb{R}\right) $ and $\nu\in C^{\infty}\left(\mathbb{R}\right)$ supported on $ \left[-L,L\right] $ $\left(L\ge1\right)$,
we have
\begin{alignat*}{1}
	& \frac{1}{M^{2}}\sum_{\begin{subarray}{c}
			c,d\\
			\left(c,d\right)=1
		\end{subarray}}\nu\left(\frac{d}{c}\right)f\left(\frac{M}{c}, \frac{a}{c},\begin{pmatrix}
		a & b\\
		c & d
	\end{pmatrix}\eta \right)\\
	& =\frac{6}{\pi^{2}} \hat{\nu}(0)\int_0^\infty \int_{ \mathbb{H}\left(\mathbb{Z}\right) \backslash\mathbb{H}\left(\mathbb{R}\right)} f\left(v,u,\xi\right) v^{-3}\,\mbox{d}\mu_{H}\mbox{d}v\\
	&+O\left(L\left\Vert \nu\right\Vert _{C^{2}}\left\Vert f\right\Vert _{C^{8}}M^{-\min\left(1/2,2/\kappa\right)+\delta}\right)
\end{alignat*}
	as $M\to\infty$, where $\left(a,b\right)$ are any integers that
	solve the equation $ad-bc=1$.
\end{prop}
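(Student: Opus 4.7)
The strategy is to interpret the left-hand side as a weighted Riemann sum along an expanding closed horocycle orbit in the affine lattice space $\mathrm{ASL}_2(\mathbb{Z})\backslash\mathrm{ASL}_2(\mathbb{R})$ twisted by $\eta$, and then to apply Strömbergsson's effective Ratner-type equidistribution theorem. To each coprime pair $(c,d)$ with $c>0$ I would assign the coset $\gamma(c,d)=\begin{pmatrix}a&b\\c&d\end{pmatrix}\in\mathrm{SL}_2(\mathbb{Z})$, unique up to left-multiplication by $\begin{pmatrix}1&n\\0&1\end{pmatrix}$. The remaining ambiguity $(a,b)\mapsto(a+nc,b+nd)$ is precisely what the Heisenberg equivalence $(u,\xi_1,\xi_2)\sim(u+n,\xi_1+n\xi_2,\xi_2)$ factors out: translating $u=a/c$ by $n$ shifts the first coordinate of $\gamma\eta$ by $n(c\eta_1+d\eta_2)=n\xi_2$ while leaving the second coordinate fixed. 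Thus $(M/c,\,a/c,\,\gamma\eta)$ defines a well-posed point in $(0,\infty)\times\mathbb{H}(\mathbb{Z})\backslash\mathbb{H}(\mathbb{R})$.

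Next I would reorganize the sum. For fixed $c$, the values $\{d/c:(c,d)=1\}$ form a coset of density $\phi(c)/c$ in $(1/c)\mathbb{Z}$, and Möbius inversion introduces the factor $\sum_{e\mid c}\mu(e)/e$ which after summation over $c$ produces the prefactor $1/\zeta(2)=6/\pi^{2}$. Weighting the sum in $d$ by $\nu(d/c)$ turns it into a Riemann discretization of an integral over a closed horocycle $\{n(x)\Lambda_c:x\in[-L,L]\}$ of length $\asymp Lc$ in $\mathrm{ASL}_2(\mathbb{Z})\backslash\mathrm{ASL}_2(\mathbb{R})$, where the base lattice $\Lambda_c$ encodes $v=M/c$ via a diagonal element $a(c^{-1})$ and the twist $\eta$. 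Summing over $c$ with $v=M/c$ corresponds to integration in the diagonal direction, and disintegrating the $\mathrm{ASL}_2$-invariant measure against $v$ produces the weight $v^{-3}\,dv\,d\mu_H$: the Iwasawa Haar measure contributes $v^{-2}\,du\,dv$, and the Jacobian of $c\mapsto M/c$ supplies the remaining $v^{-1}$.

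I would then invoke Strömbergsson's quantitative equidistribution theorem on $\mathrm{ASL}_2(\mathbb{Z})\backslash\mathrm{ASL}_2(\mathbb{R})$ applied to each closed horocycle of length $\asymp Lc$. For a $C^{8}$ test function and a Diophantine twist $\eta$ of type $\kappa$, the horocycle average matches the Haar integral up to an error of shape $(Lc)^{-\min(1/2,2/\kappa)+\delta}$, where the exponent $1/2$ is the generic spectral gap and $2/\kappa$ reflects the Diophantine obstruction coming from the continuous spectrum on the fibre torus. Summing these errors over $c\lesssim M$ (the constraint $v\ge 1/4$ forces $c\le 4M$) and accounting for the $\nu$-weight yields the global bound $L\|\nu\|_{C^{2}}\|f\|_{C^{8}}M^{-\min(1/2,2/\kappa)+\delta}$.

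The main obstacle is the bookkeeping required to balance simultaneously (i) the smoothing of the $d$-sum into a smooth horocycle integral, (ii) the Möbius inversion used to remove the coprimality condition, and (iii) the application of the effective equidistribution theorem on each scale $c$, while preserving the explicit polynomial dependence on $L$, $\|\nu\|_{C^{2}}$, and $\|f\|_{C^{8}}$. In particular, one must ensure that the Diophantine exponent $\min(1/2,2/\kappa)$ is not degraded when accumulating errors across different scales $c$ and when bounding the contributions of the non-constant Möbius summands, whose inner horocycles become shorter and therefore benefit from a weaker equidistribution rate.
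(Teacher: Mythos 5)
Your high-level framing (relate the sum to an expanding horocycle in $\Gamma\backslash G$ twisted by the Diophantine vector $\eta$, then apply Str\"ombergsson) is the same as the paper's, but the mechanism you propose has a genuine gap at its central step. You claim that, for fixed $c$, the sum over $d$ with $(c,d)=1$ weighted by $\nu(d/c)$ is ``a Riemann discretization of an integral over a closed horocycle of length $\asymp Lc$,'' to which Str\"ombergsson's theorem can be applied scale by scale. This is not justified, and as stated it is false: the summand is evaluated at $\bigl(M/c,\,a/c,\,\begin{pmatrix}a&b\\ c&d\end{pmatrix}\eta\bigr)$, and both the $u$-coordinate $a/c$ and the first component $a\eta_1+b\eta_2$ of the twisted vector depend on $d$ through the modular inverse $a\equiv d^{-1}\pmod c$, which fluctuates arithmetically rather than varying slowly on the spacing $1/c$ of the sample points $d/c$. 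So the fixed-$c$ sum is not close to any continuous horocycle integral by a Riemann-sum argument; an equidistribution statement for fixed $c$ of exactly this kind (in the case $\eta=(1/2,\alpha)$) is what the paper isolates as Conjecture \ref{conj:EquiConj} and leaves open, using it only conditionally in Appendix \ref{sec:Equidistribution-on-Heisenberg}. Moreover, Theorem \ref{thm:StrombergssonThm} concerns the continuous arc $\Gamma(1,\xi)n_+(x)a(y)$ as $y\to0$ with $\xi$ Diophantine; it does not apply to the discrete $d$-sum at a fixed modulus $c$, so ``invoking Str\"ombergsson on each closed horocycle'' has no theorem behind it. The M\"obius-inversion route to the constant $6/\pi^2$ suffers from the same problem: after removing the coprimality condition you still face sums involving $d^{-1}\bmod c$, so the factor $1/\zeta(2)$ cannot simply be pulled out.

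The paper avoids the per-$c$ problem entirely by treating all $c\asymp M$ simultaneously with a transversal (section) argument: one thickens the section $H_{v_0,v_1}$ in the $n_-(w)$ direction, checks disjointness of its $\Gamma$-translates (Lemma \ref{Lem:FundamentalDomain2}), and shows (Lemma \ref{Lem:KeyLemma}) that the single rotated horocycle $\Gamma(1,\eta)a(1/M)r(-\pi/2)n_-(x)$ meets the thickened section precisely at $x=M^2d/c+w$ with $(c,d)=1$, $v_0\le M/c\le v_1$, where it equals $\Gamma(1,\gamma\eta)n_+(a/c)a(M/c)n_-(w)$. Periodizing $f(v,u,\xi)\omega(w)$ (with $\omega$ a unit-mass bump in $w$) gives a single test function $F\in C_b^8(\Gamma\backslash G)$ whose horocycle integral at height $\asymp M^{-2}$ reproduces the full sum over all $(c,d)$ exactly, with the factor $M^{-2}$ coming from the change of variable, not from a Riemann-sum approximation. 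One application of Theorem \ref{thm:StrombergssonThm} then gives the main term $\hat\nu(0)\int_{\Gamma\backslash G}F\,d\mu$ with error $O(L\|\nu\|_{C^2}\|F\|_{C_b^8}M^{-\min(1/2,2/\kappa)+\delta})$, the constant $6/\pi^2$ and the density $v^{-3}\,d\mu_H\,dv$ emerge from computing $\int_{\Gamma\backslash G}F\,d\mu$ in Iwasawa coordinates via the change of variables to $(u,v,w)$ (no $\zeta(2)$ or M\"obius inversion is needed), and a short computation in the section coordinates gives $\|F\|_{C_b^8}\ll\|f\|_{C^8}$. To repair your argument you would either have to prove the fixed-$c$ equidistribution (essentially Conjecture \ref{conj:EquiConj}, with control on the twist by a general Diophantine $\eta$), or abandon the per-$c$ decomposition in favour of the global section argument above.
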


To give the strategy behind the proof of Proposition \ref{prop:equi_prop}, let $G$ be the semi-direct product Lie group $G=\mbox{SL}\left(2,\mathbb{R}\right)\ltimes\mathbb{R}^{2}$
with the multiplication law
\[
\left(g,\xi\right)\left(g',\xi'\right)=\left(gg',g\xi'+\xi\right),
\]
where the elements of $\mathbb{R}^{2}$ are viewed as column vectors, and let $\Gamma$ be the subgroup $\Gamma=\mbox{SL}\left(2,\mathbb{Z}\right)\ltimes\mathbb{Z}^{2}$. Denote \[
n_{+}\left(x\right)=\left(\begin{pmatrix}
1 & x\\
0 & 1
\end{pmatrix},0\right),a\left(y\right)=\left(\begin{pmatrix}
y & 0\\
0 & y^{-1}
\end{pmatrix},0\right).
\]We use the fact that, for every $0<v_0< v_1$, the set
\[ \Gamma \backslash \Gamma \left\{ (1,\xi)n_+(u)a(v): v_0<v<v_1, (u,\xi)\in \mathbb{H}\left( \mathbb{R} \right) \right\} \]is an embedding of $(v_0,v_1) \times \mathbb{H}\left(\mathbb{Z}\right)\backslash \mathbb{H}\left(\mathbb{R}\right)$ as a submanifold of $ \Gamma \backslash G $. By thickening this submanifold, we can use Strömbergsson's result to obtain equidistribution of our points.

As for the pointwise limit of the pair correlation measure
$R_{2,N}$, we show in Appendix \ref{sec:Equidistribution-on-Heisenberg}
that it also converges to the Lebesgue measure (and is thus consistent with the Poisson distribution), under the following equidistribution
conjecture on the Heisenberg group $\mathbb{H}\left(\mathbb{Z}\right)\backslash\mathbb{H}\left(\mathbb{R}\right)$, which is a pointwise version of Proposition \ref{prop:equi_prop} in the special case $ \eta = (1/2, \alpha) $:

For $f\in C^{k}\left(\mathbb{H}\left(\mathbb{Z}\right)\backslash\mathbb{H}\left(\mathbb{R}\right)\right),$
define the norm
\[
\left\Vert f\right\Vert _{C^{k}}=\sum_{\operatorname{ord}D\le k}\left\Vert Df\right\Vert _{\infty}
\]
summing over all monomials in the standard basis elements of the Lie
algebra of $\mathbb{H}\left(\mathbb{R}\right)$ (which correspond
to left-invariant differential operators on $\mathbb{H}\left(\mathbb{R}\right)$)
of degree $\le k$. 
\begin{conjecture}
\label{conj:EquiConj}There exist $k,l\in\mathbb{N},\kappa_{0}\ge2$,
such that for any fixed Diophantine  $\alpha$ of type $\kappa\le\kappa_{0},$ there exists $s=s(\kappa)>0$ such that for any $f\in C^{k}\left(\mathbb{H}\left(\mathbb{Z}\right)\backslash\mathbb{H}\left(\mathbb{R}\right)\right)$ and
$\nu\in C^{\infty}\left(\mathbb{R}\right)$ supported on $ \left[-L,L\right] $ $\left(L\ge1\right)$, we have

\begin{alignat*}{1}
\frac{1}{\phi\left(c\right)}\sum_{\begin{subarray}{c}
		d\in\mathbb{Z}\\
		\left(c,d\right)=1
\end{subarray}}\nu\left(\frac{d}{c}\right)f\left(\frac{a}{c},b\alpha+\frac{a}{2},d\alpha+\frac{c}{2}\right) & =\hat{\nu}\left(0\right)\int_{ \mathbb{H}\left(\mathbb{Z}\right) \backslash\mathbb{H}\left(\mathbb{R}\right)} f\,d\mu_{H}\\
& +O\left(L\left\Vert \nu\right\Vert _{C^{l}}\left\Vert f\right\Vert _{C^{k}}c^{-s}\right)
\end{alignat*}
as $c\to\infty$, where $\left(a,b\right)$ are any integers that
solve the equation $ad-bc=1$ and $\phi$ is the Euler totient function.
\end{conjecture}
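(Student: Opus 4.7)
The plan is to prove the conjecture by spectrally decomposing $f$ on the Heisenberg nilmanifold along the central circle action, and then bounding the resulting family of exponential sums using Weil-type estimates together with the Diophantine hypothesis on $\alpha$. The overall structure mirrors how Proposition~\ref{prop:equi_prop} is derived from Strömbergsson's effective Ratner theorem, except that here there is no average over a range of $N$ to invoke, so all power savings must be extracted directly from the arithmetic of the test point at a single modulus $c$.

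First I would write the Heisenberg Fourier decomposition
\[
f(u,\xi_1,\xi_2) = \sum_{m\in\mathbb{Z}} F_m(u,\xi_1)\,e(m\xi_2),
\]
where $F_0$ descends to a function on the $2$-torus and, for $m\neq 0$, $F_m$ represents a vector in the Schrödinger model of the unique infinite-dimensional Heisenberg representation with central character $e(m\,\cdot\,)$. The pure zero mode $m=0,\ n_1=n_2=0$ (in the further $\mathbb{T}^2$-Fourier expansion of $F_0$) produces the main term $\hat\nu(0)\int f\,d\mu_H$, up to an error from the coprimality restriction $(d,c)=1$ that can be absorbed by a Möbius inversion together with smooth Poisson summation applied to $\nu(d/c)$. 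The remaining $m=0$ modes yield incomplete Kloosterman-type sums
\[
\frac{1}{\phi(c)}\sum_{\substack{d\bmod c\\(d,c)=1}}\nu(d/c)\,e\!\left(\tfrac{n_1 a}{c}+n_2\bigl(b\alpha+\tfrac{a}{2}\bigr)\right),
\]
which should admit a power-saving bound $O(c^{-1/2+\varepsilon})$ by Weil after partial summation against the smooth weight, uniformly in $\alpha$ via its Diophantine type.

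The substantive difficulty, and the reason the statement is only conjectural, lies in the infinite-dimensional range $m\neq 0$. For each such $m$ the contribution is a Jacobi-theta-type object evaluated along the sequence $d\alpha + c/2 \pmod 1$ in the central direction, coupled through the Heisenberg cocycle to $a/c$ and to the characters $b\alpha+a/2$. I would apply Poisson summation in $d$ residue class by residue class modulo $c$, converting the inner sum into a dual Weyl sum in translates of $\alpha$, which must then be controlled by the Diophantine type $\kappa$; this is where the restriction $\kappa\le\kappa_0$ should naturally appear. The main obstacle is that the decay in $m$ of these estimates must be uniform in $c$ in order to sum against $\|f\|_{C^k}$, and matching the growth in $m$ of the Heisenberg matrix coefficients against the $\kappa$-dependent saving in the Weyl sums seems to require either subconvex bounds for shifted-convolution sums twisted by $\alpha$, or a genuinely pointwise analogue of quantitative Ratner equidistribution on $\Gamma\backslash G$ — neither of which is currently available. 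This is precisely the gap between the provable averaged statement (Proposition~\ref{prop:equi_prop}), which exploits mixing along the $N$-average to access Strömbergsson's theorem, and the conjectural pointwise one, for which no comparable dynamical input is known.
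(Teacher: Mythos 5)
You are attempting to prove a statement that the paper itself does not prove: this is precisely Conjecture \ref{conj:EquiConj}, which the authors state as an open hypothesis, support only indirectly by proving the \emph{averaged} counterpart (Proposition \ref{prop:equi_prop}, obtained by thickening the Heisenberg section inside $\Gamma\backslash G$ and invoking Str\"ombergsson's effective Ratner theorem along a long horocycle), and then use as an assumption to deduce Proposition \ref{prop:ConjecturedProp} in Appendix \ref{sec:Equidistribution-on-Heisenberg}. So there is no proof in the paper to compare against, and your proposal, by its own admission, is not a proof either: the entire content of the conjecture is concentrated in the part you concede you cannot handle. Your reduction is honest about this, and the place where you locate the obstruction is essentially the right one: the nonabelian (theta-type) Fourier modes of $f$, evaluated along $d\mapsto(\bar d/c,\,b\alpha+a/2,\,d\alpha+c/2)$ at a \emph{single} modulus $c$, are morally equivalent to pointwise equidistribution statements of the same strength as the unaveraged pair correlation problem for $\alpha n^2$ with Diophantine $\alpha$, which is open; the paper's averaging over $N$ (equivalently over $c$ in a range) is exactly what lets it substitute Str\"ombergsson's horocycle equidistribution for this missing input.

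Two specific cautions about the sketch itself. First, in the paper's coordinates the center of $\mathbb{H}(\mathbb{R})$ is the $\xi_{1}$ direction, not $\xi_{2}$; the decomposition into central characters should therefore be taken along $\xi_{1}$ (where the evaluation point is $b\alpha+a/2$), and the abelianized torus carries the variables $(u,\xi_{2})=(a/c,\,d\alpha+c/2)$. With the roles swapped, your identification of which modes are ``torus-like'' and which are Stone--von Neumann modes is off, and the bookkeeping of which sums are Kloosterman-type changes accordingly. Second, even the zero-central-character modes are not finished by a bare appeal to Weil: since $a\equiv\bar d\pmod c$ while the phase also contains $e(n\,d\alpha)$ with $\alpha$ irrational, the sums are incomplete twisted Kloosterman sums whose completion and whose uniformity in the dual frequencies require the Diophantine hypothesis in a quantitative way; this is plausible but needs to be carried out, and in any case it does not touch the genuinely open part. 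In short, your proposal correctly explains \emph{why} the statement is conjectural, but it neither proves it nor goes beyond the paper's own assessment of the difficulty.
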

We show in Appendix \ref{sec:Equidistribution-on-Heisenberg}:

\begin{prop}
	\label{prop:ConjecturedProp}Assuming Conjecture \ref{conj:EquiConj},
	there exists $ \kappa_0 \ge2 $, such that for any fixed Diophantine  $\alpha$ of type $ \kappa \le \kappa_0 $, there exists $s=s(\kappa)>0$ such that for any bounded interval $A\subset\mathbb{R}$
	\[
	R_{2,N}\left(A;\varphi_N\right)=|A|+O\left(N^{-s}\right)
	\]
	as $N\to\infty.$
\end{prop}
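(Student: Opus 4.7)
The plan is to mirror the smoothing-and-equidistribution strategy used to establish Proposition \ref{prop:KeyProp}, substituting the pointwise hypothesis of Conjecture \ref{conj:EquiConj} for the averaged Proposition \ref{prop:equi_prop}; this permits removing the Ces\`aro average in $N$ entirely. First I would pass from the sharp cutoff $R_{2,N}(A;\varphi_N)$ to a smooth surrogate by sandwiching the indicator of $A$ between smooth majorants and minorants $\hat\nu_\pm$, and the sharp index cutoff $\mathbf 1_{[1,N]}(j)$ between smooth $h_\pm$ supported on $[-1/2,3/2]$, each at a regularisation scale $\delta=N^{-s_1}$. This introduces an admissible error of order $\delta|A|$, while the Sobolev norms of $\nu_\pm,h_\pm$ grow only polynomially in $\delta^{-1}$.

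Next I would apply Poisson summation in the integer $m$ appearing in \eqref{eq:smoothR2}, exactly as in the proof of Proposition \ref{prop:KeyProp}. The $k=0$ contribution supplies the expected main term $\nu(0)|\hat h(0)|^2+\hat\nu(0)\|h\|_2^2$, while the $k\neq 0$ piece reduces to $Q_N(\nu,h)$; it therefore suffices to establish $Q_N(\nu,h)=\hat\nu(0)\|h\|_2^2+O(N^{-s_2})$ pointwise in $N$. For each $k\neq 0$, writing $k/(2N)=d/c$ in lowest terms, a standard manipulation of the Jacobi theta sum (completing the square in $n$ and reducing modulo $c$) expresses the normalised $c^{-1}|\theta_N^{h}(k)|^2$ as $F_{v,h}(a/c,\,b\alpha+a/2,\,d\alpha+c/2)$ for an explicit smooth function $F_{v,h}$ on $\mathbb{H}(\mathbb{Z})\backslash\mathbb{H}(\mathbb{R})$, depending on the scale $v=N/c$ and the window $h$ (with $ad-bc=1$), whose integral against $\mu_H$ computes to $\|h\|_2^2$ up to the weight arising from the Farey bookkeeping. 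Grouping $Q_N$ by the denominator $c$ and applying Conjecture \ref{conj:EquiConj} with $\eta=(1/2,\alpha)$ for each individual $c$ evaluates the inner sum over $d$ with $(c,d)=1$ as $\hat\nu(0)\int F_{v,h}\,d\mu_H+O(c^{-s})$; summing the main term in $c$ recovers $\hat\nu(0)\|h\|_2^2$.

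The main obstacle is that $F_{v,h}$ depends on the scale $v=N/c$, whereas Conjecture \ref{conj:EquiConj} is stated for $f$ independent of $c$. I would handle this via a dyadic decomposition $c\in[C,2C]$, on which $v$ is essentially constant: apply the conjecture with the single function $F_{N/C,h}$, and absorb the variation into a Lipschitz perturbation whose $C^k$-norm is controlled by $(C/N)\|F\|_{C^{k+1}}$. The tail $c>N^{1-\varepsilon}$, which also contains the range where $v$ is too small for $F_{v,h}$ to remain smooth, is bounded by the Parseval/Weyl mean-square estimate on $|\theta_N^{h}|^2$. Balancing the smoothing scale $\delta=N^{-s_1}$, the tail cut-off $\varepsilon$, the equidistribution exponent $s=s(\kappa)$ from the conjecture, and the polynomial growth of $\|F_{v,h}\|_{C^k}$ in $v^{-1}$ then fixes the final exponent in $N^{-s}$, subject to the type restriction $\kappa\le\kappa_0$ inherited from Conjecture \ref{conj:EquiConj}.
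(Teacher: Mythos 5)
Your overall architecture (smooth the cutoffs, Poisson-sum in $m$ to reduce to $Q_N(\nu,h)$, pass to the theta function, dissect by the denominator of $k/N$, and apply Conjecture \ref{conj:EquiConj} with $\eta=(1/2,\alpha)$ denominator by denominator) is the paper's route. But there is a genuine gap in your treatment of the tail, and it is precisely where the second, independent use of the Diophantine hypothesis on $\alpha$ enters. Writing $\sigma=\gcd(k,N)$, the reduced denominator is $c=N/\sigma$, and the $\tilde\Gamma$-transformation moves the theta square to height $y=\sigma^2$, where $\left|\Theta_h\right|^2=\sigma\sum_m\left|\hat h\left(\sigma\left(m+k\alpha_\sigma\right)\right)\right|^2+O(\sigma^{-A})$ with $\alpha_\sigma\in\{\alpha,\alpha+\tfrac12\}$. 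So the dangerous range is $\sigma$ \emph{large}, i.e.\ $c$ \emph{small} (the near-cusp range) --- the opposite of your claim that the problem sits at $c>N^{1-\varepsilon}$; small $\sigma$ (indeed $\sigma=1$) is the harmless main range. Moreover a ``Parseval/Weyl mean-square estimate'' cannot close this: Weyl differencing only gives $\left|\Theta_h\right|^2\ll\sigma$ per term, and summing over the $\ll LN/\sigma$ admissible $k$ and over divisors $\sigma>N^\delta$ yields $O(L\,\tau(N))$, with no power saving; an average bound over all $k$ says nothing about this sparse subset. The paper bounds this tail in Lemma \ref{lem:cuspEst}, which rests on Marklof's Lemma \ref{lem:MarklofLemma} and uses the Diophantine type of $\alpha$ to control how often $\sigma(m+k\alpha_\sigma)$ is small. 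This input is unavoidable: for $\alpha=0$ the tail genuinely contributes (quadratic residues), so no soft mean-square argument can replace it. Your sketch invokes the Diophantine condition only through the conjecture, so this step is missing.

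Secondly, the ``main obstacle'' you single out --- that the test function depends on the scale $v=N/c$ while the conjecture is stated for fixed $f$ --- is not an obstacle, and your dyadic fix is flawed as stated. Since the reduced denominators are $c=N/\sigma$ with $\sigma\mid N$, the scale takes the \emph{discrete} values $\sigma^2$; the conjecture has an error term explicit in $\|f\|_{C^k}$, so one simply applies it once per divisor $\sigma\le N^\delta$ with the fixed function $f_{\sigma^2,\pi/2}$, using $\|f_{\sigma^2,\pi/2}\|_{C^k}\ll\|h\|_{C^n}^2\,\sigma^{k+1}$ together with $\frac1N\sum_{\sigma\mid N,\,\sigma\le N^\delta}\phi(N/\sigma)=1+O(N^{-\delta+\epsilon})$, and then chooses $\delta$ small. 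By contrast, on a dyadic block $c\in[C,2C]$ the parameter $v=N/c$ varies by a factor of $2$, so the difference $F_{v,h}-F_{N/C,h}$ is not a small Lipschitz perturbation with norm $O((C/N)\|F\|_{C^{k+1}})$; that step would not survive scrutiny, though fortunately it is unnecessary once the divisor structure is used.
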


\section{Background}
\subsection{Effective Ratner equidistribution theorem}
Recall that $G$ is the semi-direct product Lie group $G=\mbox{SL}\left(2,\mathbb{R}\right)\ltimes\mathbb{R}^{2}$
with the multiplication law
\[
\left(g,\xi\right)\left(g',\xi'\right)=\left(gg',g\xi'+\xi\right),
\]
and that $\Gamma$ is the subgroup $\Gamma=\mbox{SL}\left(2,\mathbb{Z}\right)\ltimes\mathbb{Z}^{2}$.
Let $\mu$ be the Haar measure on $G$, normalized such that it
induces a probability measure on $\Gamma\backslash G$ (also denoted
by $\mu$).

Denote $G'=\mbox{SL}\left(2,\mathbb{R}\right)$ and $\Gamma'=\mbox{SL}\left(2,\mathbb{Z}\right)$,
which are subgroups of $G$ and $\Gamma$ under the embedding $g\mapsto\left(g,0\right)$.
We will also use the following notations:
\[
n_{+}\left(x\right)=\begin{pmatrix}
1 & x\\
0 & 1
\end{pmatrix},\,n_{-}\left(x\right)=\begin{pmatrix}
1 & 0\\
x & 1
\end{pmatrix},\,a\left(y\right)=\begin{pmatrix}
y & 0\\
0 & y^{-1}
\end{pmatrix}
\]
as well as 
\[
\Gamma'_{\infty}=\left\{ n_{+}\left(m\right):\,m\in\mathbb{Z}\right\} ,\,r\left(\phi\right)=\begin{pmatrix}
\cos\phi & -\sin\phi\\
\sin\phi & \cos\phi
\end{pmatrix}.
\]

A key ingredient in our proof will be an effective Ratner equidistribution
theorem due to Strömbergsson \cite{Strombergsson}.
%
%
Let $\mathfrak{g}$ be the Lie algebra of $G$, which may be identified
with the space $\mathfrak{sl}\left(2,\mathbb{R}\right)\oplus\mathbb{R}^{2}$
with the Lie bracket $\left[\left(X,v\right),\left(Y,w\right)\right]=\left(XY-YX,Xw-Yv\right).$
Fix the following basis of $\mathfrak{g}$:
\begin{alignat*}{1}
X_{1} & =\left(\begin{pmatrix}
0 & 1\\
0 & 0
\end{pmatrix},0\right),\hspace{1em}X_{2}=\left(\begin{pmatrix}
0 & 0\\
1 & 0
\end{pmatrix},0\right),\hspace{1em}X_{3}=\left(\begin{pmatrix}
1 & 0\\
0 & -1
\end{pmatrix},0\right)\\
X_{4} & =\left(0,\left(1,0\right)\right),\hspace{1em}X_{5}=\left(0,\left(0,1\right)\right).
\end{alignat*}

We define the space $C_{b}^{k}\left(\Gamma\backslash G\right)$ of
 $k$ times continuously differentiable functions on $\Gamma\backslash G$
 such that for any left-invariant
 differential operator $D$ on $G$ of order $\le k$, we have $\left\Vert Df\right\Vert _{\infty}<\infty$.
For $f\in C_{b}^{k}\left(\Gamma\backslash G\right),$
we define the norm
\[
\left\Vert f\right\Vert _{C_{b}^{k}}=\sum_{\operatorname{ord}D\le k}\left\Vert Df\right\Vert _{\infty}
\]
summing over all monomials in $X_{i}$, $1\le i\le5$ (which correspond
to left-invariant differential operators on $G$) of degree $\le k$.

We have the following formulation of Strömbergsson's
theorem:
\begin{thm}[Strömbergsson \cite{Strombergsson}]
\label{thm:StrombergssonThm} Let $\delta>0$ fixed, and let $\xi\in\mathbb{R}^{2}$
be a fixed Diophantine vector of type $\kappa\ge3/2$. Then for any $\nu\in C^{\infty}\left(\mathbb{R}\right)$ supported on $ \left[-L,L\right] $ $\left(L\ge1\right)$, $f\in C_{b}^{8}\left(\Gamma\backslash G\right)$
and $0<y<1,$ 
\begin{alignat}{1}
 &\label{eq:StrombergssonThm} \int_{\mathbb{R}}f\left(\Gamma\left(1,\xi\right)n_+(x)a\left(y\right)\right)\nu\left(x\right)\,\mbox{d}x\\ & =\int_{\Gamma\backslash G}f\,\mbox{d}\mu\int_{\mathbb{R}}\nu\left(x\right)\,\mbox{d}x
 +O\left(L\left\Vert \nu\right\Vert _{C^{2}}\left\Vert f\right\Vert _{C_{b}^{8}}y^{\min\left(1/2,2/\kappa\right)-\delta}\right).\nonumber 
\end{alignat}

\end{thm}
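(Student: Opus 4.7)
The plan is to prove the statement by combining effective equidistribution of horocycles on $\Gamma'\backslash G'$ with Fourier analysis in the torus fibre of the semi-direct product, using the Diophantine property of $\xi$ to control the oscillatory contributions of the non-trivial Fourier modes.

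First I would exploit the fibre-bundle structure: $\Gamma\backslash G$ is a $\mathbb{T}^2$-bundle over $\Gamma'\backslash G'$, and any $f\in C_b^8(\Gamma\backslash G)$ admits a fibrewise Fourier expansion
\[
f(\Gamma(g,v)) = \sum_{n\in\mathbb{Z}^2} c_n(g)\, e(n\cdot v), \qquad \|c_n\|_{C^8} \ll \|f\|_{C_b^8}(1+|n|)^{-8},
\]
the decay coming from repeated integration by parts in $v$. Substituting $v = \xi$ turns the integral in \eqref{eq:StrombergssonThm} into a sum over $n\in\mathbb{Z}^2$ of horocycle integrals of $c_n$ on $\Gamma'\backslash G'$ twisted by the phase $e(n\cdot n_+(x)a(y)\xi)$. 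For the zero mode $n=0$ the twist is trivial, and classical effective equidistribution of closed horocycle segments on $\Gamma'\backslash G'$ (in the spirit of Sarnak, Flaminio--Forni, and Str\"ombergsson), obtained via the spectral gap and Sobolev theory, supplies the main term together with an error $\ll y^{1/2-\delta}L\|\nu\|_{C^2}\|f\|_{C_b^8}$. This accounts for the $1/2$ in the exponent.

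For $n\ne 0$ one computes $n\cdot n_+(x)a(y)\xi = y(n_1+xn_2)\xi_1 + y^{-1}n_2\xi_2$, so repeated integration by parts in $x$ against the smooth cutoff $\nu$, combined with the $C^8$-decay of the coefficients $c_n$, produces cancellation away from a small stationary region; summing over $n\neq 0$ and using the Diophantine bound $\|n\cdot\xi\|_{\mathbb{Z}} \gg |n|^{-\kappa}$ to suppress near-resonances yields a contribution of size $y^{2/\kappa-\delta}$.

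The main obstacle is the balance between the two regimes: low-frequency modes benefit from the spectral mixing rate $y^{1/2-\delta}$ but gain little from the Diophantine bound, whereas high-frequency modes exploit the Fourier decay of $c_n$ but require many integrations by parts; the intermediate band $|n|\sim y^{-1/\kappa}$ is exactly where the two mechanisms meet, and it must be interpolated with some care to obtain the sharp exponent $\min(1/2,2/\kappa)$ rather than a weaker combination. A secondary technical point is that the effective statement on $\Gamma'\backslash G'$ must be uniform in the length $L$ of the horocycle piece and must depend only polynomially on $\|\nu\|_{C^2}$, which forces the Sobolev-theoretic input of the spectral step to be made explicit in test-function support. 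I expect this frequency-scale interpolation, together with the uniformity in $L$, to be the principal difficulty of the argument.
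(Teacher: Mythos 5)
You should first note that the paper does not prove this statement at all: it is quoted (in the paper's normalization) as Str\"ombergsson's effective Ratner theorem with a citation to his Duke paper, so your sketch can only be compared with Str\"ombergsson's actual argument. Its broad architecture -- Fourier expansion in the torus fibre, the zero mode handled by effective equidistribution of closed horocycles on $\Gamma'\backslash G'$ via the spectral gap, the nonzero modes controlled using the Diophantine property of $\xi$, and a balancing of the two regimes giving $\min(1/2,2/\kappa)$ -- is indeed the right skeleton and matches the known proof at that level.

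However, your treatment of the nonzero modes has a genuine gap. By the multiplication law $(g,\xi)(g',\xi')=(gg',g\xi'+\xi)$ one has $(1,\xi)\bigl(n_+(x)a(y),0\bigr)=\bigl(n_+(x)a(y),\xi\bigr)$, so the fibre coordinate of the orbit point in the theorem is the \emph{constant} vector $\xi$; the $x$- and $y$-dependent phase $e\bigl(n\cdot n_+(x)a(y)\xi\bigr)$ you write down belongs to the right-translated orbit $\Gamma\bigl(n_+(x)a(y),0\bigr)(1,\xi)$, which is a different object. Consequently there is no $x$-dependent oscillation to integrate by parts against in the correct parametrization (and even in the phase as you computed it, the $x$-derivative is $2\pi y\,n_2\xi_1=O(y|n|)$, which tends to $0$, so integration by parts would lose rather than gain except for frequencies $|n|\gg y^{-1}$). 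Moreover, for $n\neq0$ the fibre coefficients $c_n$ are not $\Gamma'$-invariant: $\Gamma$-invariance of $f$ forces the modes to be permuted by the $\mathrm{SL}(2,\mathbb{Z})$-action, so ``horocycle integrals of $c_n$ on $\Gamma'\backslash G'$'' are not defined. One must instead group the modes into $\Gamma'$-orbits and unfold (over $\Gamma'_\infty\backslash\Gamma'$, equivalently over Farey fractions), and the saving of size $y^{2/\kappa}$ then comes from cancellation in the resulting \emph{discrete} exponential sums in the matrix entries, where the Diophantine hypothesis enters through lower bounds of the form $\Vert q\xi-m\Vert\gg q^{1-\kappa}$ -- not from a stationary-phase analysis in the continuous variable $x$. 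A further point to handle with care: the bound $\Vert c_n\Vert_{C^8}\ll\Vert f\Vert_{C_b^8}(1+|n|)^{-8}$ requires converting the invariant fibre derivatives (at $w=0$, $X_4=v\,\partial_{\xi_1}$ and $X_5=\tfrac{u}{v}\partial_{\xi_1}+\tfrac1v\partial_{\xi_2}$) into plain $\partial_\xi$-derivatives, which costs height-dependent factors that must be tracked on the portion of the horocycle high in the cusp. So while the top-level decomposition is correct, the mechanism you propose at the heart of the proof -- the source of cancellation for $n\neq0$ -- would fail as written.
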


\subsection{Jacobi theta sums}\label{sec:Jacobi}

Recall the unique Iwasawa decomposition of a matrix $\begin{pmatrix}
a & b\\
c & d
\end{pmatrix}\in G'$:
\[
\begin{pmatrix}
a & b\\
c & d
\end{pmatrix}=n_{+}\left(x\right)a\left(\sqrt{y}\right)r\left(\phi\right)=\left(\tau,\phi\right)
\]
where $\tau=x+iy\in\mathfrak{H}$, $\phi\in[0,2\pi)$, and $\mathfrak{H}$
is the Poincaré half-plane model of the hyperbolic plane, which gives
an identification $G'=\mathfrak{H}\times[0,2\pi)$ with the left action
of $G'$ on $\mathfrak{H}\times[0,2\pi)$ given by 
\[
\begin{pmatrix}
a & b\\
c & d
\end{pmatrix}\left(\tau,\phi\right)=\left(\frac{a\tau+b}{c\tau+d},\phi+\arg\left(c\tau+d\right)\mbox{ mod}\,2\pi\right).
\]

For $ f\in \mathcal{S} (\mathbb{R}) $, let
\begin{alignat}{1}
\Theta_{f}\left(\tau,\phi;\xi\right) & =y^{1/4}\sum_{n\in\mathbb{Z}}f_{\phi}\left(\left(n-\xi_{2}\right)y^{1/2}\right)\label{eq:ThetaDef} e\left(\frac{1}{2}\left(n-\xi_{2}\right)^{2}x+n\xi_{1}\right) 
\end{alignat}
be the Jacobi Theta sum, where
\[
f_{\phi}\left(w\right)=\begin{cases}
f\left(w\right) & \phi=0\mod\,2\pi\\
f\left(-w\right) & \phi=\pi\mod\,2\pi\\
\left|\sin\phi\right|^{-1/2}\int_{\mathbb{R}}e\left[\frac{\frac{1}{2}\left(w^{2}+w'^{2}\right)\cos\phi-ww'}{\sin\phi}\right]f\left(w'\right)\,\mbox{d}w' & \phi\ne0\mod\,\pi.
\end{cases}
\]
It is well-known that the operators $U^{\phi}:f\mapsto f_{\phi}$
are unitary; note that $f_{\pi/2}=\hat{f}$. Moreover, $\Theta_{f}$ is a smooth function on $G$ (see \cite{MarklofCellarosi} for example). 

Let $\tilde{\Gamma}$ be the subgroup of $G$ defined by
\[
\tilde{\Gamma}=\left\{ \gamma = \left(\begin{pmatrix}
a & b\\
c & d
\end{pmatrix},\begin{pmatrix}
ab/2\\
cd/2
\end{pmatrix}+m\right):\begin{pmatrix}
a & b\\
c & d
\end{pmatrix}\in\Gamma',m\in\mathbb{Z}^{2}\right\} ,
\]
acting on $\mathfrak{H}\times[0,2\pi)\times \mathbb{R}^2$ by \[ \gamma (\tau , \phi ; \xi)= \left( \frac{a\tau +b}{c\tau + d}, \phi + \arg(c\tau + d) \mbox{ mod}\,2\pi ; \begin{pmatrix}
a & b\\
c & d 
\end{pmatrix} \xi + \begin{pmatrix}
ab/2\\
cd/2
\end{pmatrix}+ m\right). \]
It is well-known (see \cite[Proposition 4.9]{Marklof}) that $\left|\Theta_{f}\left(\tau,\phi;\xi\right)\right|^{2}$
is invariant under the left action of $\tilde{\Gamma}$. Moreover,
it is easy to see that
\begin{equation}
\int_{\mathbb{T}^{2}}\left|\Theta_{f}\left(\tau,\phi;\xi\right)\right|^{2}\,\mbox{d}\xi=\left\Vert f\right\Vert _{2}^{2}.\label{eq:ThetaL2Norm}
\end{equation}
We also have the following approximation for $y\ge1/2$ (see \cite[Proposition 4.10]{Marklof}):
For all $A > 1$
\begin{alignat}{1}
\left|\Theta_{f}\left(\tau,\phi;\xi\right)\right|^{2} & =y^{1/2}\sum_{m\in\mathbb{Z}}\left|f_{\phi}\left(y^{1/2}\left(m-\xi_{2}\right)\right)\right|^{2}+O\left(C_{A,f_\phi} y^{(-A+1)/2}\right)\label{eq:ThetaEstimate}\\
 & =y^{1/2}\left|f_{\phi}\left(y^{1/2}\left( m_0 -\xi_{2}\right) \right)\right|^{2}+O\left(C_{A,f_\phi} y^{(-A+1)/2}\right)\nonumber 
\end{alignat}
where $ m_0\in \mathbb{Z} $ so that $|m_0 - \xi_2| \le \frac{1}{2}$, $ C_{A,f_\phi} = \sup_{x\in \mathbb R} \left| \left (1+\left|x\right|\right)^{A} f_\phi(x) \right |^2$, and the error term is uniform in $x,\phi,\xi$.

\section{\label{sec:The-Smooth-Two-Point}The smooth pair correlation
function}

In order to prove Proposition \ref{prop:KeyProp}, we first notice
that $Q_{N}\left(\nu,h\right)$ can be interpreted as a sum of $\left|\Theta_{h}\right|^{2}$, namely

\[
Q_{N}\left(\nu,h\right)=\frac{1}{N}\sum_{k\ne0}\nu\left(\frac{k}{N}\right)\left|\Theta_{h}\left(\frac{k}{N}+\frac{i}{N^{2}},0,\begin{pmatrix}
0\\
\alpha
\end{pmatrix}\right)\right|^{2}.
\]

Next, we decompose $Q_{N}$ into a sum over the divisors $\sigma\mid N$, and show that the contribution of large $\sigma's$ is small. We first recall the following lemma from \cite{Marklof}:
\begin{lem}[{Marklof \cite[Lemma 6.6]{Marklof}}]\label{lem:MarklofLemma}
	Let $ \alpha  $ be Diophantine of type $ \kappa $, and $ f\in C(\mathbb{R}) $ be rapidly decreasing at $ \pm \infty $, $ f\ge 0 $. Then, for any fixed $ A>1 $ and $ 0 < \epsilon< 1/(\kappa -1 )  $ there exists $ k=k(A,\kappa,\epsilon ) $ such that
	\[ \sum_{d=1}^D \sum_{m\in \mathbb{Z}} f \left(T (d\alpha + m)\right) \ll \begin{cases} C_k T^{-A}
	& (D\le T^\epsilon) \\ C_2  & (T^\epsilon \le D \le T^{1/(\kappa-1)}) \\
	C_2 DT^{-1/(\kappa-1)} &  (D\ge T^{1/(\kappa -1 )})\end{cases}\]uniformly for all $ D,T>1 $, where $ C_n = \sup_{x\in \mathbb{R}}\left\{\left(1+\left|x\right|^{n}\right)f(x)\right\} $ (the implied constant is independent of $D,T,f$).
\end{lem}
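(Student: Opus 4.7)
The strategy is to isolate, for each $d$, the integer $m$ closest to $-d\alpha$ (the only index for which $f(T(d\alpha+m))$ is potentially non-negligible), and then estimate the resulting one-dimensional sum over $d$ by methods tailored to the three ranges of $D$. Writing $d\alpha = m_d + \epsilon_d$ with $m_d\in\mathbb{Z}$ and $|\epsilon_d|\le 1/2$, the Diophantine hypothesis on $\alpha$ gives $|\epsilon_d|\ge C d^{-(\kappa-1)}$; reindexing $m=-m_d+n$ transforms the inner sum into $\sum_{n\in\mathbb{Z}} f\bigl(T(\epsilon_d+n)\bigr)$. For $n\ne 0$ the argument satisfies $|T(\epsilon_d+n)|\ge T(|n|-1/2)$, so rapid decay yields $\sum_{n\ne 0} f(T(\epsilon_d+n))\ll C_k T^{-k}$ for any $k\ge 2$; summed over $d\le D$ this tail contributes $\ll C_k D T^{-k}$, which is absorbed into each of the three target bounds provided $k$ is chosen large enough (depending on $A,\kappa,\epsilon$). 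The remaining task is to control $\Sigma:=\sum_{d=1}^D f(T\epsilon_d)$.

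In Case 1 ($D\le T^\epsilon$), the uniform lower bound $|\epsilon_d|\ge CD^{-(\kappa-1)}\ge CT^{-\epsilon(\kappa-1)}$ gives $T|\epsilon_d|\ge CT^{1-\epsilon(\kappa-1)}$; since $\epsilon<1/(\kappa-1)$ the exponent is positive, so $f(T\epsilon_d)\le C_k T^{-k(1-\epsilon(\kappa-1))}$ uniformly in $d$, and
\[
\Sigma\le D\cdot C_k T^{-k(1-\epsilon(\kappa-1))}\le C_k T^{\epsilon-k(1-\epsilon(\kappa-1))}.
\]
Choosing $k\ge(A+\epsilon)/(1-\epsilon(\kappa-1))$ renders this $\ll C_k T^{-A}$, which also dominates the tail error $C_k D T^{-k}$ in this regime.

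For Cases 2 and 3 the key ingredient is the counting estimate
\[
\#\{d\le D:|\epsilon_d|\le\lambda\}\ll 1+D\lambda^{1/(\kappa-1)},
\]
which follows from the Diophantine property: if $d_1<d_2$ are both in this set, then the distance of $(d_2-d_1)\alpha$ to the nearest integer is $\le 2\lambda$, forcing $d_2-d_1\gg\lambda^{-1/(\kappa-1)}$, so successive members are $\gg\lambda^{-1/(\kappa-1)}$ apart. Using the pointwise bound $f(T\epsilon_d)\le C_2(1+T^2\epsilon_d^2)^{-1}$ and dyadically decomposing the range of $|\epsilon_d|$ into bands $(2^{-j-1},2^{-j}]$ with $j\le\log_2(D^{\kappa-1}/C)$, the resulting geometric-type series summed around the scale $T 2^{-j}\sim 1$ yields
\[
\Sigma\ll C_2\bigl(1+DT^{-1/(\kappa-1)}\bigr).
\]
This is $\ll C_2$ in Case 2 (where $DT^{-1/(\kappa-1)}\le 1$) and $\ll C_2 DT^{-1/(\kappa-1)}$ in Case 3; the tail error $C_2 D T^{-2}$ from $k=2$ is subsumed since $1/(\kappa-1)\le 1\le 2$.

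The main technical obstacle is the bookkeeping in Cases 2 and 3: the counting lemma and the dyadic decomposition must interact so as to deliver \emph{exactly} the exponent $1/(\kappa-1)$ demanded by the conclusion, with the split between the two dyadic regimes occurring at $j\sim\log_2 T$. The Case 1 estimate is by comparison routine once $k$ is allowed to depend on $A$, $\kappa$, and $\epsilon$.
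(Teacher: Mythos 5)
Your proposal is correct. Note first that the paper does not actually prove this lemma: it is quoted from \cite{Marklof} (Lemma 6.6 there), so there is no in-paper argument to compare against line by line; your write-up supplies a self-contained proof of the cited statement. The mechanism you use is the standard (and essentially the only) one: isolate the nearest integer $m$, so that rapid decay reduces everything to $\sum_{d\le D} f(T\Vert d\alpha\Vert)$ up to a tail $O(C_k D T^{-k})$, and then convert the Diophantine hypothesis into (i) the uniform lower bound $\Vert d\alpha\Vert \gg d^{1-\kappa}$, which together with $\epsilon<1/(\kappa-1)$ and a choice like $k\ge(A+\epsilon)/(1-\epsilon(\kappa-1))$ settles the range $D\le T^{\epsilon}$, and (ii) the spacing/counting bound $\#\{d\le D:\Vert d\alpha\Vert\le\lambda\}\ll 1+D\lambda^{1/(\kappa-1)}$, which drives the other two ranges. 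Both steps are right, and your final bound $\sum_{d\le D}f(T\Vert d\alpha\Vert)\ll C_2\bigl(1+DT^{-1/(\kappa-1)}\bigr)$ does follow from the dyadic decomposition.

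One detail deserves to be written out rather than absorbed into a ``geometric-type series'': in the scales with $T2^{-j}\le 1$ the factor $\min(1,2^{2j}T^{-2})$ is just $1$, so the ``$+1$'' terms of the counting bound contribute about $\log_{+}\bigl(D^{\kappa-1}/T\bigr)=(\kappa-1)\log_{+}\bigl(DT^{-1/(\kappa-1)}\bigr)$, which is not geometric; it is nevertheless $\ll 1+DT^{-1/(\kappa-1)}$, so the stated conclusion survives, and likewise the tail choice $k=2$ is legitimate in Cases 2 and 3 because $1/(\kappa-1)\le 1\le 2$ keeps the error expressible through $C_2$ alone, as the lemma requires. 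With that point made explicit, the argument is complete and matches the input (Diophantine spacing of $\Vert d\alpha\Vert$ plus rapid decay of $f$) on which the proof in \cite{Marklof} also rests, there organized by splitting $1\le d\le D$ into blocks on which the points $d\alpha$ mod $1$ are $\gg T^{-1}$ separated; the two bookkeeping schemes are interchangeable and yield identical exponents.
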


We will use Lemma \ref{lem:MarklofLemma} to prove the following lemma:
\begin{lem}\label{lem:cuspEst}
Fix a Diophantine $\alpha$ of type $\kappa$,
and let $0<\delta<\frac{\kappa-1}{\kappa}$,  $ 0<\epsilon<\frac{\delta}{\kappa-1}$ be fixed. There exists $ n=n(\kappa, \delta,\epsilon)\in \mathbb N $  such that for any $\nu\in C^{\infty}\left(\mathbb{R}\right)$ supported on $ \left[-L,L\right] $ $\left(L\ge1\right)$,
$h\in C^{\infty}\left(\mathbb{R}\right)$ real valued and supported on $ \left[-1/2,3/2\right] $ and $ M \asymp N $,
\begin{alignat}{1}
Q_{N}\left(\nu,h\right) & =\frac{1}{N}\sum_{\begin{subarray}{c}
\sigma\mid N\\
\sigma\le M^{\delta}
\end{subarray}}\sum_{\left(k,N/\sigma\right)=1}\nu\left(\frac{k}{N/\sigma}\right)\left|\Theta_{h}\left(\frac{k}{N/\sigma}+\frac{i}{N^{2}},0,\begin{pmatrix}
0\\
\alpha
\end{pmatrix}\right)\right|^{2}\label{eq:TailLemmaEq}\\
 & +O\left(L \left\Vert h\right\Vert_{C^n}^2\left\Vert \nu\right\Vert _{\infty} N^{-\frac{\delta}{\kappa-1}+\epsilon}\right)\nonumber 
\end{alignat}
as $N\to\infty$.\end{lem}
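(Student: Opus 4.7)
Using the identification verified in Section~\ref{sec:Jacobi},
\[
\Theta_h\!\left(\frac{k}{N}+\frac{i}{N^2},0;\begin{pmatrix}0\\ \alpha\end{pmatrix}\right)=\frac{1}{\sqrt{N}}\sum_{n\in\mathbb{Z}} h\!\left(\frac{n-\alpha}{N}\right)e\!\left(\frac{k(n-\alpha)^{2}}{2N}\right),
\]
so that $Q_N(\nu,h)=\frac{1}{N}\sum_{k\ne 0}\nu(k/N)\bigl|\Theta_h(k/N+i/N^{2},0;(0,\alpha)^T)\bigr|^2$. The plan is to group the sum by $\sigma=\gcd(|k|,N)$, writing $k=\sigma k'$ with $(k',c)=1$ where $c:=N/\sigma$; then $k/N=k'/c$. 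The divisors $\sigma\le M^{\delta}$ (equivalently $c\ge N/M^{\delta}\asymp M^{1-\delta}$) furnish precisely the first term on the right of (\ref{eq:TailLemmaEq}), so it remains to bound the tail sum over the small divisors $c\mid N$ with $c<N/M^{\delta}$.

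\textbf{Passing to large imaginary part.} For each such small $c$ and each coprime $k$, pick $a,b\in\mathbb{Z}$ with $-ak-bc=1$, form $\gamma=\bigl(\begin{smallmatrix}a&b\\c&-k\end{smallmatrix}\bigr)\in\mathrm{SL}_2(\mathbb{Z})$, and lift to $\tilde\gamma\in\tilde\Gamma$ via the half-integer shift $(ab/2,-ck/2)^T$ as in Section~\ref{sec:Jacobi}. Since $c\tau-k=ic/N^{2}$ for $\tau=k/c+i/N^{2}$, one computes $\mathrm{Im}(\gamma\tau)=N^{2}/c^{2}\ge M^{2\delta}$ (well above $1/2$) and $\phi'=\pi/2$, so $h_{\phi'}=\hat h$; tracking the $\tilde\gamma$-action on $\xi=(0,\alpha)^T$ gives $\xi'_{2}\equiv -k(\alpha+c/2)\pmod 1$. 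Invoking the $\tilde\Gamma$-invariance of $|\Theta_h|^{2}$ together with the approximation (\ref{eq:ThetaEstimate}) with $y'=N^{2}/c^{2}$,
\[
\bigl|\Theta_h(k/c+i/N^{2},0;(0,\alpha)^T)\bigr|^{2}=\frac{N}{c}\bigl|\hat h\bigl(\tfrac{N}{c}\|k\alpha_{c}\|\bigr)\bigr|^{2}+O\!\left(C_{A,\hat h}\,(c/N)^{A-1}\right),
\]
where $\alpha_{c}:=\alpha+c/2$ and $\|\cdot\|$ denotes the nearest-integer distance. Summed over $|k|\le Lc$ and over divisors $c\mid N$, the error contributes $O\!\left(L\|\nu\|_\infty d(N) M^{-\delta(A-1)}\right)$, negligible after taking $A$ large.

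\textbf{Applying Marklof's lemma.} The main approximating term contributes, per $c$, the quantity
\[
\frac{\|\nu\|_\infty}{c}\sum_{\substack{|k|\le Lc\\ (k,c)=1}}\bigl|\hat h\bigl(\tfrac{N}{c}\|k\alpha_{c}\|\bigr)\bigr|^{2}.
\]
The key observation is that $\alpha_{c}$ is itself Diophantine of type $\kappa$: for even $c$, $\alpha_{c}\equiv\alpha\pmod 1$; for odd $c$, $\alpha_{c}\equiv\alpha+1/2\pmod 1$, and the identity $|\alpha+1/2-p/q|=|\alpha-(2p-q)/(2q)|$ converts the Diophantine estimate for $\alpha$ into one for $\alpha+1/2$ with constant $C/2^{\kappa}$. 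Dropping coprimality and applying Lemma~\ref{lem:MarklofLemma} with $\alpha\leftarrow\alpha_{c}$, $T=N/c$, $D=Lc$, $f=|\hat h|^{2}$ (Schwartz, with $C_{m}\ll\|h\|_{C^{n(m)}}^{2}$), and with the lemma's free parameter $\epsilon_0:=\delta/(\kappa-1)-\epsilon/2$ (admissible since $\delta<1$ forces $\epsilon_{0}<1/(\kappa-1)$), the three regimes of the lemma partition the tail: when $Lc\le T^{\epsilon_{0}}$ the contribution is super-polynomially small; when $T^{\epsilon_{0}}<Lc\le T^{1/(\kappa-1)}$ the per-$c$ bound is $O(\|h\|_{C^n}^{2}/c)$, but the defining constraint $Lc>T^{\epsilon_{0}}$ forces $c\gtrsim(N^{\epsilon_{0}}/L)^{1/(1+\epsilon_{0})}$, whence $\sum 1/c\le d(N)/c_{\min}\ll L\, N^{-\delta/(\kappa-1)+\epsilon}$; and when $Lc>T^{1/(\kappa-1)}$ the per-$c$ bound is $O\!\left(L\|h\|^{2}_{C^n}(c/N)^{1/(\kappa-1)}\right)\le O\!\left(L\|h\|^{2}_{C^n}M^{-\delta/(\kappa-1)}\right)$, summed over at most $d(N)\ll N^{\epsilon}$ divisors. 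Adding the three contributions yields the claimed bound. The main obstacle lies precisely in the middle regime: a naive summation of the $O(1/c)$ per-$c$ bound would yield only $O(\log\log N)$, insufficient for any power saving. The resolution is to exploit the defining constraint $Lc>T^{\epsilon_{0}}$ as a genuine lower bound on $c$, and to tune Marklof's parameter $\epsilon_{0}\approx\delta/(\kappa-1)$ so that this lower endpoint is calibrated to the target exponent $-\delta/(\kappa-1)+\epsilon$.
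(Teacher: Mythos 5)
Your route is essentially the paper's: group by $\sigma=\gcd(k,N)$, pass to the cusp via the $\tilde{\Gamma}$-invariance of $|\Theta_h|^2$ (your unified $\alpha_c=\alpha+c/2$, i.e.\ $\alpha$ or $\alpha+\tfrac12$ according to the parity of $c=N/\sigma$, is exactly the paper's $\alpha_\sigma$, and your remark that $\alpha+\tfrac12$ is again Diophantine of type $\kappa$ is a correct point the paper leaves implicit), use the approximation \eqref{eq:ThetaEstimate} at $y=N^2/c^2=\sigma^2\ge M^{2\delta}$, and then apply Lemma \ref{lem:MarklofLemma} with $T=\sigma$, $D=Lc$ in its three ranges. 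Your handling of the approximation error, of the range $D\ge T^{1/(\kappa-1)}$ (which gives $L\|h\|^2\sigma^{-1/(\kappa-1)}\le L\|h\|^2M^{-\delta/(\kappa-1)}$ per divisor) and of the super-polynomially small range is correct and matches the paper.

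The gap is in your middle range, and it is quantitative but real: with your tuning $\epsilon_0=\frac{\delta}{\kappa-1}-\frac{\epsilon}{2}$, the constraint $Lc>T^{\epsilon_0}=(N/c)^{\epsilon_0}$ only gives $c\gtrsim (N^{\epsilon_0}/L)^{1/(1+\epsilon_0)}$, so your bound $\sum 1/c\le d(N)/c_{\min}$ produces a saving $N^{-\epsilon_0/(1+\epsilon_0)+o(1)}$, and $\frac{\epsilon_0}{1+\epsilon_0}\approx\frac{\delta}{\kappa-1+\delta}$ falls short of the target exponent $\frac{\delta}{\kappa-1}-\epsilon$ by the fixed amount $\frac{\epsilon_0^2}{1+\epsilon_0}$, which is not $O(\epsilon)$. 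Concretely, for $\kappa=2$, $\delta=0.4$, $\epsilon=0.1$ you get $\epsilon_0=0.35$ and an exponent $\frac{0.35}{1.35}\approx 0.26$, whereas the lemma requires $0.3$; so the displayed inequality $\sum 1/c\ll LN^{-\delta/(\kappa-1)+\epsilon}$ is false for small admissible $\epsilon$, which is exactly the case needed downstream. The repair is to take the free parameter in Lemma \ref{lem:MarklofLemma} larger: either solve $\frac{\epsilon_0}{1+\epsilon_0}=\frac{\delta}{\kappa-1}-\epsilon$ (admissible, i.e.\ $\epsilon_0<\frac{1}{\kappa-1}$, precisely because $\delta<\frac{\kappa-1}{\kappa}$), or, as the paper does, take $\epsilon_0=\frac{1-\epsilon}{\kappa-1}$, so that the middle range is confined to $(LN)^{\frac{\kappa-1}{\kappa}}\le\sigma\le(LN)^{\frac{\kappa-1}{\kappa-\epsilon}}$ and the per-divisor contribution $\|h\|_{C^1}^2/c=\|h\|_{C^1}^2\,\sigma/N$ is $\ll \|h\|_{C^1}^2 (LN)^{\frac{\kappa-1}{\kappa-\epsilon}}/N$, whose $N$-exponent $-\frac{1-\epsilon}{\kappa-\epsilon}$ beats $-\frac{\delta}{\kappa-1}+\epsilon$ because $\frac{\delta}{\kappa-1}<\frac1\kappa$; this is where the hypothesis $\delta<\frac{\kappa-1}{\kappa}$ is actually consumed. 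With that re-tuning your argument coincides with the paper's proof.
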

\begin{proof}
Let $ \sigma \mid N $, and assume first that $N/\sigma$ is even. For $\left(k,N/\sigma\right)=1$, by the invariance of $\left|\Theta_{h}\right|^{2}$
under $\tilde{\Gamma}$ we have 
\begin{equation}
\left|\Theta_{h}\left(\frac{k}{N/\sigma}+\frac{i}{N^{2}},0,\begin{pmatrix}
0\\
\alpha
\end{pmatrix}\right)\right|^{2}=\left|\Theta_{h}\left(\frac{a}{N/\sigma}+\sigma^{2}i,\pi/2,\begin{pmatrix}
b\alpha\\
-k\alpha
\end{pmatrix}\right)\right|^{2}\label{eq:ThetaLift}
\end{equation}
where $\left(a,b\right)$ are (any) integers that solve the equation
$ak+bN/\sigma=-1$ such that $b$ is even. 
By (\ref{eq:ThetaLift}) and (\ref{eq:ThetaEstimate}), for $\left(k,N/\sigma\right)=1$
and for all $A > 1$ we have
\[
\left|\Theta_{h}\left(\frac{k}{N/\sigma}+\frac{i}{N^{2}},0,\begin{pmatrix}
0\\
\alpha
\end{pmatrix}\right)\right|^{2}=\sigma\sum_{m\in\mathbb{Z}}\left|\hat{h}\left(\sigma\left(m+k\alpha\right)\right)\right|^{2}+O\left(C_{A,\hat{h}}\sigma^{-A+1}\right),
\]
where $ C_{A,\hat{h}} = \sup_{x\in \mathbb R} \left| \left (1+\left|x\right|\right)^{A} \hat{h}(x) \right |^2$.

If $ N/\sigma $ is odd, then for $ (k,N/\sigma)=1 $, by the invariance of $ |\Theta_h|^2 $ under $ \tilde{\Gamma} $ we have\[ \left|\Theta_{h}\left(\frac{k}{N/\sigma}+\frac{i}{N^{2}},0,\begin{pmatrix}
0\\
\alpha
\end{pmatrix}\right)\right|^{2}=\left|\Theta_{h}\left(\frac{a}{N/\sigma}+\sigma^{2}i,\pi/2,\begin{pmatrix}
b\alpha\\
-k(\alpha+\frac{1}{2})
\end{pmatrix}\right)\right|^{2}\] where $\left(a,b\right)$ are (any) integers that solve the equation
$ak+bN/\sigma=-1$ such that $a$ is even, and hence, for all $A > 1$ we have
\begin{alignat*}{1}
\left|\Theta_{h}\left(\frac{k}{N/\sigma}+\frac{i}{N^{2}},0,\begin{pmatrix}
0\\
\alpha
\end{pmatrix}\right)\right|^{2}&=\sigma\sum_{m\in\mathbb{Z}}\left|\hat{h}\left(\sigma\left(m+k\left(\alpha+\frac{1}{2}\right)\right)\right)\right|^{2}\\
&+O\left(C_{A,\hat{h}}\sigma^{-A+1}\right).
\end{alignat*}

Thus, for all $ B > 1 $ there exists $ n=n(B,\delta)\in\mathbb{N} $ such that
\begin{alignat*}{1}
 & \frac{1}{N}\sum_{\begin{subarray}{c}
\sigma\mid N\\
\sigma>M^{\delta}
\end{subarray}}\sum_{\left(k,N/\sigma\right)=1}\nu\left(\frac{k}{N/\sigma}\right)\left|\Theta_{h}\left(\frac{k}{N/\sigma}+\frac{i}{N^{2}},0,\begin{pmatrix}
0\\
\alpha
\end{pmatrix}\right)\right|^{2}\\
 & \ll\left\Vert \nu\right\Vert _{\infty}\frac{1}{N}\sum_{\begin{subarray}{c}
\sigma\mid N\\
\sigma>M^{\delta}
\end{subarray}}\sigma\sum_{k\le LN/\sigma}\sum_{m\in\mathbb{Z}}\left|\hat{h}\left(\sigma\left(m+k\alpha_\sigma\right)\right)\right|^{2}+L\left\Vert \nu\right\Vert _{\infty}\left\Vert h\right\Vert^2 _{C_n}N^{-B},
\end{alignat*} where \[ \alpha_\sigma = \begin{cases} \alpha & N/\sigma \mbox{ is even} \\ \alpha+\frac{1}{2} & N/\sigma \mbox{ is odd}. \end{cases} \] To bound the main term, we divide the outer summation into three ranges: First, if $M^{\delta}<\sigma\le\left(LN\right)^{\frac{\kappa-1}{\kappa}}$,
then we have $LN/\sigma \ge \sigma^{\frac{1}{\kappa-1}}$, so by Lemma \ref{lem:MarklofLemma} we have
\[
\sum_{k\le LN/\sigma}\sum_{m\in\mathbb{Z}}\left|\hat{h}\left(\sigma\left(m+k\alpha_\sigma\right)\right)\right|^{2}\ll \left\Vert h\right\Vert_{C^1}^2 \frac{LN/\sigma}{\sigma^{\frac{1}{\kappa-1}}}
\]
and hence
\begin{alignat*}{1}
 & \frac{1}{N}\sum_{\begin{subarray}{c}
\sigma\mid N\\
M^{\delta}<\sigma\le\left(LN\right)^{\frac{\kappa-1}{\kappa}}
\end{subarray}}\sigma\sum_{k\le LN/\sigma}\sum_{m\in\mathbb{Z}}\left|\hat{h}\left(\sigma\left(m+k\alpha_\sigma\right)\right)\right|^{2}\\
 & \ll L \left\Vert h\right\Vert_{C^1}^2 \sum_{\begin{subarray}{c}
\sigma\mid N\\
M^{\delta}<\sigma\le\left(LN\right)^{\frac{\kappa-1}{\kappa}}
\end{subarray}}\sigma^{-\frac{1}{\kappa-1}}\ll L \left\Vert h\right\Vert_{C^1}^2 N^{\frac{-\delta}{\kappa-1}+\epsilon}.
\end{alignat*}

For $\left(LN\right)^{\frac{\kappa-1}{\kappa}} \le \sigma\le\left(LN\right)^{\frac{\kappa-1}{\kappa-\epsilon}}$
we have $\sigma^{\frac{1-\epsilon}{\kappa-1}}\le  LN/\sigma \le \sigma^{\frac{1}{\kappa-1}},$
so by Lemma \ref{lem:MarklofLemma},
\[
\sum_{k\le LN/\sigma}\sum_{m\in\mathbb{Z}}\left|\hat{h}\left(\sigma\left(m+k\alpha_\sigma\right)\right)\right|^{2}\ll \left\Vert h\right\Vert_{C^1}^2
\]
and then
\begin{alignat*}{1}
 & \frac{1}{N}\sum_{\begin{subarray}{c}
\sigma\mid N\\
\left(LN\right)^{\frac{\kappa-1}{\kappa}}\le \sigma\le\left(LN\right)^{\frac{\kappa-1}{\kappa-\epsilon}}
\end{subarray}}\sigma\sum_{k\le LN/\sigma}\sum_{m\in\mathbb{Z}}\left|\hat{h}\left(\sigma\left(m+k\alpha_\sigma\right)\right)\right|^{2}\\
 & \ll\frac{\left\Vert h\right\Vert_{C^1}^2}{N}\sum_{\begin{subarray}{c}
\sigma\mid N\\
\left(LN\right)^{\frac{\kappa-1}{\kappa}} \le \sigma\le\left(LN\right)^{\frac{\kappa-1}{\kappa-\epsilon}}
\end{subarray}}\sigma\ll  \frac{\left\Vert h\right\Vert_{C^1}^2 \left(LN\right)^{\frac{\kappa-1}{\kappa-\epsilon}+\epsilon}}{N}.
\end{alignat*}

For $\left(LN\right)^{\frac{\kappa-1}{\kappa-\epsilon}}\le\sigma,$ we have
$LN/\sigma \le \sigma^{\frac{1-\epsilon}{\kappa-1}}$, so by Lemma \ref{lem:MarklofLemma}
for every $A>1$ there exists $ n=n(A,\kappa,\epsilon)\in\mathbb{N} $ such that
\[
\sum_{k\le LN/\sigma}\sum_{m\in\mathbb{Z}}\left|\hat{h}\left(\sigma\left(m+k\alpha_\sigma\right)\right)\right|^{2}\ll\left\Vert h\right\Vert_{C^n}^2 \sigma^{-A},
\]
and hence for all $ B >1 $ there exists $ n=n(B,\kappa,\epsilon)\in\mathbb{N} $ such that
\[
\frac{1}{N}\sum_{\begin{subarray}{c}
\sigma\mid N\\
\sigma \ge \left(LN\right)^{\frac{\kappa-1}{\kappa-\epsilon}} 
\end{subarray}}\sigma\sum_{k\le LN/\sigma}\sum_{m\in\mathbb{Z}}\left|\hat{h}\left(\sigma\left(m+k\alpha_\sigma\right)\right)\right|^{2}\ll \left\Vert h\right\Vert_{C^n}^2 N^{-B}.
\]

The lemma now follows from the bounds in the different
ranges.

\end{proof}

We thus conclude that smooth averages of $Q_{N}$ can be approximated
by the following smooth sums of $\left|\Theta_{h}\right|^{2}$, which are closely related to the integral
on the l.h.s. of (\ref{eq:StrombergssonThm}), as we shall see in
\S\ref{sec:Equidistribution-of-Sums}:
\begin{cor}
\label{cor:MainSum}Fix a Diophantine $\alpha$ of type $\kappa$,
and let $\max\left(\frac{17}{18},1-\frac{2}{9\kappa}\right)<\eta \le 1$,  $0<\delta<\frac{\kappa-1}{\kappa}$, and  $ 0<\epsilon<\frac{\delta}{\kappa-1}$ be fixed. There exists $ n=n(\kappa,\delta,\epsilon)\in \mathbb N $ such that for any $\nu\in C^{\infty}\left(\mathbb{R}\right)$ supported on $ \left[-L,L\right] $ $\left(L\ge1\right)$ and
$\psi,h\in C^{\infty}\left(\mathbb{R}\right)$ real valued and supported on $ \left[-1/2,3/2\right] $,
\begin{alignat*}{1}
&\frac{1}{M^{\eta}}\sum_{N\in\mathbb{Z}}\psi\left(\frac{N-M}{M^\eta}\right)Q_{N}\left(\nu,h\right) =\frac{1}{M^{\eta}}\sum_{\sigma\le M^\delta}\frac{1}{\sigma}\sum_{N\in\mathbb{Z}}\psi\left(\frac{\sigma N-M}{M^\eta}\right)\frac{1}{N}\\
 & \times\sum_{\left(k,N\right)=1}\nu\left(\frac{k}{N}\right)\left|\Theta_{h}\left(\frac{k}{N}+\frac{i}{\sigma^2 N^{2}},0,\begin{pmatrix}
0\\
\alpha
\end{pmatrix}\right)\right|^{2}\\
 & +O\left(L \left\Vert h\right\Vert _{C^{n}}^{2} \left\Vert \nu\right\Vert _{\infty}\left\Vert \psi\right\Vert _{\infty}M^{-\frac{\delta}{\kappa-1}+\epsilon}\right)
\end{alignat*}
as $M\to\infty.$
\end{cor}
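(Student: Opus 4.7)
\textbf{Plan for the proof of Corollary \ref{cor:MainSum}.}
The corollary is essentially a mechanical consequence of Lemma \ref{lem:cuspEst} followed by a change of summation variable. The plan is to insert the decomposition of $Q_{N}(\nu,h)$ provided by Lemma \ref{lem:cuspEst} into the smoothed sum $\frac{1}{M^{\eta}}\sum_{N}\psi((N-M)/M^{\eta})Q_{N}(\nu,h)$, swap the order of summation so that $\sigma$ becomes the outer variable, and then substitute $N=\sigma N'$ in the inner sum over those $N$ divisible by $\sigma$.

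First I would verify that Lemma \ref{lem:cuspEst} applies uniformly for the $N$ that actually contribute. Since $\psi$ is supported on $[-1/2,3/2]$ and $\eta\le 1$, the factor $\psi((N-M)/M^{\eta})$ is nonzero only for $N\in[M-M^{\eta}/2,\,M+3M^{\eta}/2]$, which for $M$ large is contained in $[M/2,5M/2]$; hence $M\asymp N$ is satisfied with absolute constants, and the hypothesis of Lemma \ref{lem:cuspEst} holds uniformly in the sum. The number of nonzero summands is $O(M^{\eta})$, so combining the Lemma \ref{lem:cuspEst} error $O(L\|h\|_{C^{n}}^{2}\|\nu\|_{\infty}N^{-\delta/(\kappa-1)+\epsilon})$ with the trivial bound $|\psi|\le\|\psi\|_{\infty}$ and the normalising factor $1/M^{\eta}$ yields exactly the claimed remainder $O(L\|h\|_{C^{n}}^{2}\|\nu\|_{\infty}\|\psi\|_{\infty}M^{-\delta/(\kappa-1)+\epsilon})$ (using $N\asymp M$).

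For the main term I would swap the double sum $\sum_{N}\sum_{\sigma\mid N,\,\sigma\le M^{\delta}}$ into $\sum_{\sigma\le M^{\delta}}\sum_{N:\,\sigma\mid N}$ and then set $N=\sigma N'$. Under this substitution, $\nu(k/(N/\sigma))$ becomes $\nu(k/N')$, the point $k/(N/\sigma)+i/N^{2}$ becomes $k/N'+i/(\sigma^{2}N'^{2})$, the prefactor $1/N$ becomes $1/(\sigma N')$ (so $1/\sigma$ can be pulled outside the inner sum), the coprimality condition $(k,N/\sigma)=1$ becomes $(k,N')=1$, and the weight $\psi((N-M)/M^{\eta})$ becomes $\psi((\sigma N'-M)/M^{\eta})$. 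Relabelling $N'\mapsto N$ gives precisely the right-hand side of the corollary.

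No step here is delicate: the entire argument is a formal rearrangement together with the error bound of Lemma \ref{lem:cuspEst}, so there is no genuine obstacle. The only point that requires care is checking that $M\asymp N$ holds uniformly over the effective summation range and independently of $\sigma$, which is exactly why the restriction $\eta\le 1$ and the compact support of $\psi$ matter.
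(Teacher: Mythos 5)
Your proposal is correct and is exactly the argument the paper intends (the paper states the corollary without a written proof, as an immediate consequence of Lemma \ref{lem:cuspEst}): apply the lemma for each $N$ in the support of $\psi\left(\frac{N-M}{M^{\eta}}\right)$, where $N\asymp M$ holds automatically, so the $O(M^{\eta})$ error terms average to the stated remainder, and then swap the $(N,\sigma)$ summation and substitute $N=\sigma N'$ to obtain the main term. No gaps.
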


\section{\label{sec:Equidistribution-of-Sums}A geometric equidistribution result}

\subsection{Coordinates near the section}

Let $ 1/4\le v_0<v_1 $, and define the section
\[
H'_{v_0,v_1} =  \left\{ n_{+}\left(u\right)a\left(v\right): u\in\mathbb{R},\,v_0\le v \le v_1\right\}\subset G'.
\]

In order to prove Proposition \ref{prop:equi_prop}, given $ \epsilon>0 $ we define $H'_{\epsilon,v_{0},v_{1}}$ to be the following thickening of $H'_{v_0,v_1}$:
\[
H'_{\epsilon,v_{0},v_{1}}=\left\{ n_{+}\left(u\right)a\left(v\right)n_{-}\left(w\right):\,u\in\mathbb{R},\,v_{0}\le v\le v_{1},\,\left|w\right|\le\epsilon\right\},
\]
so $ (u,v,w) $ are local coordinates near the section
$H'_{v_0,v_1}$.
\begin{lem}
\label{Lem:FundamentalDomain}Fix $ 0<\epsilon<1/32 $, and let $ 1/4 \le v_0 < v_1 $. For any $\gamma\in\Gamma'$
such that $\gamma\notin\Gamma'_{\infty}$, we have
\[
\gamma H'_{\epsilon,v_{0},v_{1}}\cap H'_{\epsilon,v_{0},v_{1}}=\emptyset.
\]
\end{lem}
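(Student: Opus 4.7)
The natural approach is to write the matrices explicitly and exploit the fact that the bottom row of $n_+(u)a(v)n_-(w)$ is completely determined by $v$ and $w$, not by $u$. A direct computation gives
\[
n_+(u)a(v)n_-(w) = \begin{pmatrix} v + uv^{-1}w & uv^{-1} \\ v^{-1}w & v^{-1} \end{pmatrix},
\]
so the bottom row equals $(v^{-1}w,\,v^{-1})$.

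Suppose for contradiction that $\gamma g_1 = g_2$ for some $g_i = n_+(u_i)a(v_i)n_-(w_i) \in H'_{\epsilon,v_0,v_1}$, where $\gamma = \left(\begin{smallmatrix} a & b \\ c & d \end{smallmatrix}\right) \in \Gamma' \setminus \Gamma'_\infty$, so $c \neq 0$. Matching the bottom rows of $\gamma g_1$ and $g_2$ gives two scalar equations: the $(2,2)$ entry yields
\[
cu_1 + d = \frac{v_1}{v_2},
\]
which, substituted into the $(2,1)$ entry equation $cv_1 + v_1^{-1}w_1(cu_1 + d) = v_2^{-1}w_2$, collapses (after multiplying by $v_2$) to the clean identity
\[
w_2 - w_1 = c\,v_1 v_2.
\]

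Now I invoke the size constraints. Since $c \in \mathbb{Z} \setminus \{0\}$, we have $|c| \ge 1$, while $v_1, v_2 \ge v_0 \ge 1/4$, so the right-hand side has absolute value at least $v_0^2 \ge 1/16$. On the other hand, $|w_1|, |w_2| \le \epsilon < 1/32$ forces $|w_2 - w_1| \le 2\epsilon < 1/16$, a contradiction. Therefore the intersection is empty.

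The argument is essentially a one-line computation once the matrix form is written down; there is no real obstacle. The only thing to be slightly careful about is choosing the right pair of entries to compare (the bottom row, which is independent of $u$), so that the ``unbounded'' coordinate $u$ does not interfere with the bound on $w$. The strict inequalities $\epsilon < 1/32$ and $v_0 \ge 1/4$ are precisely matched so that $2\epsilon < v_0^2$, which is what makes the bound close.
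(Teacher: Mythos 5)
Your core computation is correct and gives a more self-contained argument than the paper's. The paper instead right-multiplies each element by $a(4\sqrt{2})$, checks that the resulting Iwasawa height is $y=\tfrac{32v^{2}}{1+(32w)^{2}}>1$ for all points of $H'_{\epsilon,v_0,v_1}$, and then appeals to the classical fact that the region $\{y>1\}$ meets its $\Gamma'$-translates only for elements of $\Gamma'_{\infty}$; your bottom-row identity $w_2-w_1=c\,v_1v_2$ together with $|c|\ge 1$, $v_1v_2\ge v_0^{2}\ge 1/16$, $|w_2-w_1|\le 2\epsilon<1/16$ reaches the same conclusion by a direct matrix comparison, and is in fact closer in spirit to the computation the paper itself uses later for the key Lemma on when the lifted horocycle meets $H_{\epsilon,v_0,v_1}$. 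The numerics match on both routes, so the two proofs buy essentially the same thing; yours avoids invoking the geometry of the modular surface.

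There is one small hole: the deduction ``$\gamma\notin\Gamma'_{\infty}$, so $c\neq 0$'' is not true as stated. The paper defines $\Gamma'_{\infty}=\{n_{+}(m):m\in\mathbb{Z}\}$ inside $\Gamma'=\mathrm{SL}(2,\mathbb{Z})$, so the elements $-I$ and more generally $-n_{+}(m)$ lie outside $\Gamma'_{\infty}$ yet have $c=0$, and your size argument says nothing about them. The fix is immediate from your own $(2,2)$-entry equation: if $c=0$ then $ad=1$ forces $a=d=\pm 1$, and $cu_1+d=d=v_1/v_2>0$ forces $d=1$, so $\gamma=n_{+}(b)\in\Gamma'_{\infty}$, contradicting the hypothesis. (Equivalently: the lower-right entries $-1/v_1$ and $1/v_2$ have opposite signs.) Add that one sentence and the proof is complete. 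A cosmetic point: you use $v_1$ both for the upper endpoint of the $v$-range and for the $a(v)$-coordinate of $g_1$; rename one of them to avoid confusion.
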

\begin{proof}
Let \[h=n_{+}\left(u\right)a\left(v\right)n_{-}\left(w\right),\,\tilde{h}=n_{+}\left(\tilde{u}\right)a\left(\tilde{v}\right)n_{-}\left(\tilde{w}\right)\in H'_{\epsilon,v_{0},v_{1}}.\] A short calculation yields that in the Iwasawa coordinates, $ h a(4\sqrt{2})$ has \[ y=\frac{32v^2}{1+(32w)^2} > 1, \]and likewise for $ \tilde{h}a(4\sqrt{2}) $ we have $ y>1 $. Therefore for any $\gamma\notin\Gamma'_{\infty}$,
$\gamma h a(4\sqrt{2})\ne \tilde{h} a(4\sqrt{2}),$ and hence $\gamma h \ne \tilde{h}.$
\end{proof}

Recall that we identify elements of $ G' $ with elements of $ G $ under the embedding $ g \mapsto (g,0) $. In particular $ n_+(x), n_-(x), a(y)$ and $r(\phi) $ are identified with $ (n_+(x),0), (n_-(x),0), (a(y),0) $ and $(r(\phi),0) $ respectively.

Let \[ H = \left\{ (1,\xi) n_+(u): (u,\xi)\in \mathbb{H}(\mathbb{R}) \right\} \simeq \mathbb{H}(\mathbb{R})\] and \[ \Gamma_H = \Gamma \cap H \simeq \mathbb{H}(\mathbb{Z}).\]
Note that $\Gamma_H \backslash H \simeq \Gamma \backslash \Gamma H$. Thus, for every $0<v_0< v_1$,
\begin{equation} (v_0,v_1)\times \mathbb{H}\left(\mathbb{Z}\right)\backslash \mathbb{H}\left(\mathbb{R}\right) \hookrightarrow \Gamma \backslash \Gamma H \{a(v):v_0< v<v_1\} \label{eq:embedding} \end{equation} is an embedding of $ (v_0,v_1) \times \mathbb{H}\left(\mathbb{Z}\right)\backslash \mathbb{H}\left(\mathbb{R}\right)$ as a submanifold of $ \Gamma \backslash G $. 

Let $H_{v_0,v_1}=H'_{v_0,v_1}\times\mathbb{R}^{2},$ and $H{}_{\epsilon,v_{0},v_{1}}=H'_{\epsilon,v_{0},v_{1}}\times\mathbb{R}^{2}$. The proof of the next lemma is similar to the proof of Lemma \ref{Lem:FundamentalDomain}:
\begin{lem}
\label{Lem:FundamentalDomain2}Fix $ 0<\epsilon<1/32 $, and let $ 1/4 \le v_0 < v_1 $. For any $\gamma\in\Gamma$ such
that $\gamma\notin \Gamma_H$, we have
\[
\gamma H_{\epsilon,v_{0},v_{1}}\cap H_{\epsilon,v_{0},v_{1}}=\emptyset.
\]

\end{lem}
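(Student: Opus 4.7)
The plan is to reduce this to Lemma \ref{Lem:FundamentalDomain} by exploiting the semidirect product structure of $G = \mathrm{SL}(2,\mathbb{R}) \ltimes \mathbb{R}^2$ and noting that the $\mathbb{R}^2$ slot is unconstrained inside $H_{\epsilon,v_0,v_1}$. Concretely, I would write a general element of $\Gamma$ as $\gamma = (g,m)$ with $g \in \mathrm{SL}(2,\mathbb{Z})$ and $m \in \mathbb{Z}^2$, and suppose for contradiction that there exist $(h,\xi), (\tilde h,\tilde \xi) \in H_{\epsilon,v_0,v_1} = H'_{\epsilon,v_0,v_1} \times \mathbb{R}^2$ with $\gamma (h,\xi) = (\tilde h,\tilde \xi)$.

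By the multiplication law $(g,m)(h,\xi) = (gh, g\xi + m)$, this equation splits into the two coordinates $gh = \tilde h$ and $g\xi + m = \tilde \xi$. The first equation has $h,\tilde h \in H'_{\epsilon,v_0,v_1}$, so Lemma \ref{Lem:FundamentalDomain} applies and forces $g \in \Gamma'_\infty$, i.e. $g = n_+(k)$ for some $k \in \mathbb{Z}$. The second equation is automatic in the sense that once $g$ and $m$ are fixed, choosing any $\xi \in \mathbb{R}^2$ determines $\tilde \xi = g\xi + m \in \mathbb{R}^2$; since both $\xi$ and $\tilde \xi$ are free parameters in $H_{\epsilon,v_0,v_1}$, it imposes no restriction on $\gamma$.

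We now identify $\Gamma_H$: since $H = \{(n_+(u),\xi) : u \in \mathbb{R},\; \xi \in \mathbb{R}^2\}$, we have $\Gamma_H = \Gamma \cap H = \{(n_+(k),m) : k \in \mathbb{Z},\; m \in \mathbb{Z}^2\}$. The element $\gamma = (n_+(k),m)$ therefore lies in $\Gamma_H$, contradicting the hypothesis $\gamma \notin \Gamma_H$. Since the only nontrivial content is Lemma \ref{Lem:FundamentalDomain} itself, there is essentially no obstacle beyond bookkeeping the semidirect product; the key point, which is what makes the proof truly analogous to the previous lemma rather than a direct corollary, is verifying that the translation part genuinely contributes nothing, i.e.\ that the second-coordinate equation is consistent for every $m \in \mathbb{Z}^2$.
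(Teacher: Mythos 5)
Your reduction is correct and is essentially the argument the paper intends when it says the proof is "similar to the proof of Lemma \ref{Lem:FundamentalDomain}": the group law $(g,m)(h,\xi)=(gh,\,g\xi+m)$ forces the $\mathrm{SL}(2)$ components to satisfy $gh=\tilde h$ with $h,\tilde h\in H'_{\epsilon,v_0,v_1}$, Lemma \ref{Lem:FundamentalDomain} then gives $g\in\Gamma'_\infty$, and since the $\mathbb{R}^2$ slot of $H_{\epsilon,v_0,v_1}$ is unconstrained while $m\in\mathbb{Z}^2$, this places $\gamma=(n_+(k),m)$ in $\Gamma_H$. The only difference is that you invoke Lemma \ref{Lem:FundamentalDomain} as a black box rather than repeating its $a(4\sqrt2)$ computation, which is a perfectly valid (and slightly cleaner) way to carry out the same idea.
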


The following key lemma shows that modulo $ \Gamma $, points on the lifted horocycle $ \left(1,\eta\right)a\left(\frac{1}{M}\right)r\left(-\frac{\pi}{2}\right)n_{-}\left(x\right) $ are in $ $$H_{\epsilon,v_0,v_1} $ exactly when they are close to the points in Proposition \ref{prop:equi_prop} (under the embedding (\ref{eq:embedding})) that we claim to be equidistributed:

\begin{lem}
	\label{Lem:KeyLemma}Fix $ \epsilon>0 $, and let $ 1/4 \le v_0 < v_1 $. For any $ M>0, \eta \in \mathbb{R}^2, x\in\mathbb{R} $, we have
	\[ \Gamma\left(1,\eta\right)a\left(\frac{1}{M}\right)r\left(-\frac{\pi}{2}\right)n_{-}\left(x\right)\cap H_{\epsilon,v_{0},v_{1}}\neq\emptyset \] if and only if $x=M^{2}d/c+w,$ where $\left(c,d\right)=1$, $v_{0}\le M/c\le v_{1}$
	and $\left|w\right|\le\epsilon$, and then
	
	\begin{alignat*}{1} & \Gamma\left(1,\eta\right)a\left(\frac{1}{M}\right)r\left(-\frac{\pi}{2}\right)n_{-}\left(x\right) \\
		& = \Gamma \left(1,\begin{pmatrix}
			a & b\\
			c & d
		\end{pmatrix} \eta \right) n_+\left(\frac{a}{c}\right)a\left(\frac{M}{c}\right) n_-\left(w\right) 
	\end{alignat*}
	
	where  $\left(a,b\right)$ are any integers that
	solve the equation $ad-bc=1$.
\end{lem}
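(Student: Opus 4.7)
\medskip

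\noindent\textbf{Proof proposal.} The plan is to carry out a direct matrix computation, essentially turning the statement into a concrete Bruhat-type decomposition. First I would rewrite the element on the left. Using the semi-direct product multiplication rule and identifying $a(1/M)r(-\pi/2)n_-(x)$ with $(a(1/M)r(-\pi/2)n_-(x),0)\in G$, one gets
\[
(1,\eta)a(1/M)r(-\pi/2)n_-(x)=(g_0,\eta),\qquad g_0=\begin{pmatrix}x/M & 1/M\\ -M & 0\end{pmatrix}.
\]
For $\gamma=(\gamma',m)\in\Gamma$ with $\gamma'=\begin{pmatrix}a&b\\c&d\end{pmatrix}\in\Gamma'$ and $m\in\mathbb Z^2$, one has $\gamma\cdot(g_0,\eta)=(\gamma'g_0,\gamma'\eta+m)$. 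Since the $\mathbb R^2$-factor of $H_{\epsilon,v_0,v_1}=H'_{\epsilon,v_0,v_1}\times\mathbb R^2$ is unconstrained, the intersection condition reduces to $\gamma'g_0\in H'_{\epsilon,v_0,v_1}$.

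The core observation is the ``Bruhat-like'' decomposition
\[
n_+(u)a(v)n_-(w)=\begin{pmatrix}v+uw/v & u/v\\ w/v & 1/v\end{pmatrix},
\]
which shows that a matrix $\begin{pmatrix}p&q\\r&s\end{pmatrix}\in G'$ belongs to the set $\{n_+(u)a(v)n_-(w):u,w\in\mathbb R,\,v>0\}$ precisely when $s>0$, with parameters $v=1/s$, $u=q/s$, $w=r/s$. Applying this to $\gamma'g_0=\begin{pmatrix}ax/M-bM & a/M\\ cx/M-dM & c/M\end{pmatrix}$, I read off $v=M/c$, $u=a/c$, $w=x-M^2d/c$. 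Imposing $v_0\le v\le v_1$ (which in particular forces $c>0$, since $v_0\ge 1/4>0$) and $|w|\le\epsilon$ gives exactly the stated conditions: $v_0\le M/c\le v_1$ and $x=M^2 d/c+w$ with $|w|\le\epsilon$. Coprimality $(c,d)=1$ is automatic because $(c,d)$ is the bottom row of $\gamma'\in \mathrm{SL}(2,\mathbb Z)$; conversely, any coprime pair $(c,d)$ (with $c>0$) extends to some $\gamma'\in\Gamma'$ via a choice of $(a,b)$ with $ad-bc=1$.

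Putting the pieces together finishes the proof. Having chosen such $\gamma'$, the intersection element is
\[
\gamma\cdot(1,\eta)a(1/M)r(-\pi/2)n_-(x)=\bigl(n_+(a/c)a(M/c)n_-(w),\,\gamma'\eta+m\bigr).
\]
Modulo the subgroup $\{(1,k):k\in\mathbb Z^2\}\subset\Gamma$, one may absorb $m$ and replace the $\mathbb R^2$-part by $\gamma'\eta$. By the semi-direct product law this last element equals $(1,\gamma'\eta)\cdot n_+(a/c)a(M/c)n_-(w)$, giving the displayed formula in the lemma and showing that it is independent of the choice of $(a,b)$. Conversely, if $x$ is of the stated form, reversing the construction exhibits a $\gamma\in\Gamma$ realising the intersection.

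I do not foresee a serious obstacle here: the argument is essentially a bookkeeping exercise once the Bruhat-like decomposition $n_+(u)a(v)n_-(w)$ is written out. The only small point requiring care is the sign of $c$ (forced positive by $v_0>0$) and the observation that the $\mathbb R^2$-factor of $H_{\epsilon,v_0,v_1}$ is all of $\mathbb R^2$, so modifications by $\Gamma\cap(\{1\}\times\mathbb Z^2)$ are harmless and allow a canonical representative with $\mathbb R^2$-component $\gamma'\eta$.
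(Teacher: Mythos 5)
Your proposal is correct and follows essentially the same route as the paper: compute $\gamma' a(1/M)r(-\pi/2)n_-(x)=\begin{pmatrix}ax/M-bM & a/M\\ cx/M-dM & c/M\end{pmatrix}$ and read off $(u,v,w)=(a/c,\,M/c,\,x-M^2d/c)$ from the $n_+(u)a(v)n_-(w)$ parametrization, with coprimality of $(c,d)$ coming from the bottom row of an $\mathrm{SL}(2,\mathbb Z)$ matrix. Your handling of the $\mathbb R^2$-factor (absorbing $m$ by a translation in $\Gamma$ to get the representative $(1,\gamma'\eta)n_+(a/c)a(M/c)n_-(w)$) is just a more explicit version of the paper's remark that the calculation extends from $G'$ to $G$.
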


\begin{proof}
	For any $g=\begin{pmatrix}
		\alpha & \beta \\
		\gamma & \delta
	\end{pmatrix}\in G'$, we have
	\[ g= n_{+}\left(u\right)a\left(v\right)n_{-}\left(w\right)= \begin{pmatrix}
	v+uw/v & u/v\\
	w/v & 1/v
	\end{pmatrix} \] if and only if $ \beta/\delta = u, 1/\delta = v, \gamma/\delta = w $.
	Hence $ g\in H'_{\epsilon,v_0,v_1} $ if and only if $ v_0\ \le 1/\delta \le v_1 $ and $ |\gamma/\delta|\le \epsilon $.
	
	Let $\begin{pmatrix}
	a & b \\
	c & d
	\end{pmatrix}\in \Gamma'$. Then \[ \begin{pmatrix}
	a & b \\
	c & d
	\end{pmatrix} a\left(\frac{1}{M}\right)r\left(-\frac{\pi}{2}\right)n_{-}\left(x\right) =
	 \begin{pmatrix}
	 ax/M-bM & a/M \\
	 cx/M-dM & c/M
	 \end{pmatrix} \]is in $ H'_{\epsilon,v_0,v_1} $ if and only if $ v_0 \le M/c \le v_1 $ and $ \left|x- M^2d/c\right|\le \epsilon $. Moreover, $ u=a/c, v=M/c $ and $ w=x-M^2d/c $. Since the same calculation extends to $ G $, the statement of the lemma follows.
\end{proof}

\subsection{Proof of Proposition \ref{prop:equi_prop}}

We will now use Theorem \ref{thm:StrombergssonThm} to prove Proposition \ref{prop:equi_prop}.

\begin{proof}
Fix a Diophantine vector $ \eta $ of type $\kappa$, $\delta>0$. In addition, fix  $0<\epsilon < 1/32$, and fix $\omega\in C^{\infty}\left(\mathbb{R}\right)$ supported on $ \left[-\epsilon,\epsilon\right] $
such that $\int\omega\left(w\right)\,\mbox{d}w=1$. Let $ 1/4 \le v_0 < v_1 $, 
$f\in C^{8}\left((0,\infty) \times \mathbb{H}\left(\mathbb{Z}\right) \backslash \mathbb{H}\left(\mathbb{R}\right)\right)
$ supported on $ [v_0,v_1] \times \mathbb{H}\left(\mathbb{Z}\right) \backslash \mathbb{H}\left(\mathbb{R}\right) $ and $\nu\in C^{\infty}\left(\mathbb{R}\right)$ supported on $ \left[-L,L\right] $ $\left(L\ge1\right)$.

Recalling the embedding (\ref{eq:embedding}), we define
\begin{gather*}
\tilde{F}\left(g\right)=\begin{cases}
f\left(v,u,\xi\right)\omega\left(w\right) & g=\left(1,\xi\right)n_{+}\left(u\right)a\left(v\right)n_{-}\left(w\right) \in H_{\epsilon,v_{0},v_{1}}\\
0 & \mbox{otherwise}
\end{cases}
\end{gather*}
and let $F\in C_b^{8}\left(\Gamma\backslash G\right)$ be defined
by
\begin{equation}
F\left(\Gamma g\right)=\sum_{\gamma\in \Gamma_H \backslash\Gamma}\tilde{F}\left(\gamma g\right).\label{eq:periodize_F}
\end{equation}
Note that by Lemma \ref{Lem:FundamentalDomain2}, all but one of the
terms in (\ref{eq:periodize_F}) vanish, so $F$ is well-defined. 

We have
\begin{alignat*}{1}
 & I:=\int_\mathbb{R} F\left(\Gamma\left(1,\eta\right)n_+(x)a\left(\frac{1}{M}\right)r\left(-\frac{\pi}{2}\right)\right)\nu\left(x\right)\,\mbox{d}x\\
 & =\frac{1}{M^{2}}\int_\mathbb{R} F\left(\Gamma\left(1,\eta\right)a\left(\frac{1}{M}\right)r\left(-\frac{\pi}{2}\right)n_{-}\left(x\right)\right)\nu\left(-\frac{x}{M^{2}}\right)\,\mbox{d}x.
\end{alignat*}
Note that since $ v_0\ge1/4 $ and $ 0 < \epsilon <1/32 $, intervals of the form $ d/c+w/M^2 $ are disjoint for $ v_0 \le M/c \le v_1, |w|\le \epsilon$. Thus, by Lemma \ref{Lem:KeyLemma}, 
\begin{alignat*}{1}
I & =\frac{1}{M^{2}}\sum_{\begin{subarray}{c}
c,d\\
\left(c,d\right)=1
\end{subarray}}f\left(\frac{M}{c}, \frac{a}{c},\begin{pmatrix}
a & b\\
c & d
\end{pmatrix} \eta \right)\int_\mathbb{R} \nu\left(-d/c-w/M^{2}\right)\omega\left(w\right)\,\mbox{d}w\\
 & =\frac{1}{M^{2}}\sum_{\begin{subarray}{c}
c,d\\
\left(c,d\right)=1
\end{subarray}}\nu\left(\frac{-d}{c}\right)f\left(\frac{M}{c}, \frac{a}{c},\begin{pmatrix}
a & b\\
c & d
\end{pmatrix} \eta \right)+O\left(L\left\Vert \nu\right\Vert _{C^{1}} \left\Vert f\right\Vert_\infty M^{-2}\right).
\end{alignat*}

In terms of the Iwasawa coordinates (Section \ref{sec:Jacobi}), the measure $ \mu $ is given by \[ \mbox{d} \mu (\tau , \phi; \xi) = \frac{3}{\pi^{2}}y^{-2} \, \mbox{d}x \, \mbox{d}y \, \mbox{d}\phi \, \mbox{d}\xi.\]Therefore,
\[ \int_{\Gamma\backslash G}F\,\mbox{d}\mu= \frac{3}{\pi^{2}} \int\limits_{[0,1]^2}   \int_{-\frac{\pi}{2}}^{\frac{\pi}{2}} \int_0^\infty   \int_{0}^1 \tilde{F}\left((1,\xi)n_+(x) a(\sqrt{y}) r(\phi)\right) y^{-2} \,  \mbox{d}x \, \mbox{d}y \, \mbox{d}\phi \, \mbox{d}\xi.\]

Making the change of variables $ x=u+\frac{wv^2}{1+w^2} $, $ y=\frac{v^2}{1+w^2} $, $ \phi = \arctan w $ for which the Jacobian is equal to \[ \frac{2v}{(1+w^2)^2}=2y^2v^{-3}\] so that \[ (1,\xi)n_+(x) a(\sqrt{y}) r(\phi) = \left(1,\xi\right)n_{+}\left(u\right)a\left(v\right)n_{-}\left(w\right), \] we get (using the fact that $ \omega $ is of unit mass) that
\[
\int_{\Gamma\backslash G}F\,\mbox{d}\mu=\frac{6}{\pi^{2}}\int_0^\infty \int_{\mathbb{H}(\mathbb{Z}) \backslash \mathbb{H}(\mathbb{R})} f\left(v,u,\xi\right) v^{-3}\,\mbox{d}\mu_{H}\mbox{d}v.\] 
Thus, by Theorem \ref{thm:StrombergssonThm},
\begin{alignat}{1}\label{eq:EquiFarey} 
 & \frac{1}{M^{2}}\sum_{\begin{subarray}{c}
 		c,d\\
 		\left(c,d\right)=1
 	\end{subarray}}\nu\left(-\frac{d}{c}\right)f\left(\frac{M}{c}, \frac{a}{c},\begin{pmatrix}
 	a & b\\
 	c & d
 \end{pmatrix} \eta \right)  \\
 & =\frac{6}{\pi^{2}} \hat{\nu}(0) \int_0^\infty \int_{\mathbb{H}(\mathbb{Z}) \backslash \mathbb{H}(\mathbb{R})} f\left(v,u,\xi\right) v^{-3}\,\mbox{d}\mu_{H}\mbox{d}v + \nonumber \\
 & O\left(L\left\Vert \nu\right\Vert _{C^{2}}\left\Vert F\right\Vert _{C_{b}^{8}}M^{-\min\left(1/2,2/\kappa\right)+\delta}\right).\nonumber 
\end{alignat}
Finally, we note that
in the coordinates of $H_{\epsilon,v_{0},v_{1}},$
\begin{eqnarray*}
	X_{1} & = & v^{2}\frac{\partial}{\partial u}-wv\frac{\partial}{\partial v}-w^{2}\frac{\partial}{\partial w},\hspace{1em}X_{2}=\frac{\partial}{\partial w}\\
	X_{3} & = & v\frac{\partial}{\partial v}+2w\frac{\partial}{\partial w},\hspace{1em}X_{4}=\frac{v^2+uw}{v}\frac{\partial}{\partial\xi_{1}}+\frac{w}{v}\frac{\partial}{\partial\xi_{2}}\\
	X_{5} & = & \frac{u}{v}\frac{\partial}{\partial\xi_{1}}+\frac{1}{v}\frac{\partial}{\partial\xi_{2}},
\end{eqnarray*}
and therefore (Since $ \omega $ and $ \epsilon $ are fixed) $\left\Vert F\right\Vert _{C_{b}^{8}} \ll \left\Vert f\right\Vert _{C^{8}}$. Note that equation (\ref{eq:EquiFarey}) also holds with $ \nu\left( \frac{d}{c}\right) $ on the l.h.s. instead of $ \nu\left(- \frac{d}{c}\right) $, since we can replace the function $ \nu(x) $ with $ \nu(-x) $, leaving the r.h.s. of  (\ref{eq:EquiFarey}) unchanged. Hence, the statement of the proposition follows.
\end{proof}

\subsection{Proof of Proposition \ref{prop:KeyProp}}
\label{sec:Prop2.2Proof}
\begin{proof}
By the invariance of $\left|\Theta_{h}\right|^2$
under $\tilde{\Gamma}$ and since for any $ \begin{pmatrix}
a & b\\
c & d
\end{pmatrix} \in \Gamma' $ we have $ ab/2 \equiv (a+b-1)/2 \mod 1$, we see that the function
\[ F(x+iy,\phi,\xi) = \left|\Theta_{h}\left(x+iy,\phi,\begin{pmatrix}
\xi_{1}-\frac{1}{2}\\
\xi_{2}-\frac{1}{2}
\end{pmatrix}\right)\right|^{2} \] belongs to $ C^\infty(\Gamma \backslash G) $, i.e., for every $ \left(\begin{pmatrix}
a & b\\
c & d
\end{pmatrix},m\right)\in \Gamma$, we have
\begin{equation}
F(\tau, \phi; \xi) = F\left( \frac{a\tau +b}{c\tau + d}, \phi + \arg(c\tau + d) \mbox{ mod} \, 2\pi ; \begin{pmatrix}
a & b\\
c & d 
\end{pmatrix} \xi + m\right)\label{eq:InvarianceEq}.\end{equation}
In particular, $ F $ is invariant under $ \Gamma_H $, and therefore for fixed $y,\phi$, the function
\[
f_{y,\phi}\left(x,\xi\right)= F(x+iy,\phi,\xi)
\]
belongs to $C^{\infty}\left(\mathbb{H}\left(\mathbb{Z}\right)\backslash\mathbb{H}\left(\mathbb{R}\right)\right)$,
and by (\ref{eq:ThetaL2Norm})
\[
\int_{\mathbb{H}\left(\mathbb{Z}\right)\backslash\mathbb{H}\left(\mathbb{R}\right)} f_{y,\phi}\,d\mu_{H}=\left\Vert h\right\Vert _{2}^{2}.
\]
Moreover, by the same reasoning of (\ref{eq:ThetaEstimate}), for $y\ge 1/2$
\begin{equation}
f_{y,\frac{\pi}{2}}\left(x,\xi\right)=y^{1/2}\left|\hat{h}\left(y^{1/2}\left(m_0- \xi_{2}+\frac{1}{2}\right) \right)\right|^{2}+R\left(x,y,\xi_1, \xi_2 \right)\label{eq:Theta_est}
\end{equation}
where $ m_0\in \mathbb{Z} $ so that $ \left| m_0-\xi_2+\frac{1}{2} \right| \le \frac{1}{2}$; for all $A>1,k\in\mathbb{N}$, there exists $ n=n(A,k) $ such that $R$ and all its partial derivatives of order $ k $ are uniformly
bounded by $O\left(\left\Vert h\right\Vert _{C^{n}}^2 y^{-A}\right)$.
Fix $ \sigma \ge 1$ and let
\[ f(v,u,\xi) = f_{\sigma^2,\frac{\pi}{2}}(u,\xi) \sigma ^{-1}v \psi\left(M^{1-\eta}\left(\sigma v^{-1}-1 \right) \right)
. \] Since $ \psi $ is supported on $[-1/2,3/2]$ and $ \eta \le 1 $, $ f $  vanishes unless  \[ -1/2 \le \sigma v^{-1} -1 \le 3/2, \] and in particular it  vanishes unless $ 2/5 \le v \le 2\sigma $, so $ f $ is compactly supported on $[1/4, \infty) \times \mathbb{H}\left(\mathbb{Z}\right) \backslash\mathbb{H}\left(\mathbb{R}\right)$.
If $ (k,N)=1 $ and $ a,b $ are integers that solve the equation $ ak+bN = -1 $, then by (\ref{eq:InvarianceEq}),
\begin{alignat*}{1}
&\left|\Theta_{h}\left(\frac{k}{N}+\frac{i}{\sigma^2 N^{2}},0,\begin{pmatrix}
0\\
\alpha
\end{pmatrix}\right)\right|^{2} = F\left(\frac{k}{N}+\frac{i}{\sigma^2 N^2}, 0, \begin{pmatrix}
\frac 1 2\\
\alpha+ \frac 1 2\end{pmatrix} \right) \\
& = F\left(\frac{a}{N}+i\sigma^2, \frac{\pi}{2}, \begin{pmatrix}
	a & b\\
	N & -k
\end{pmatrix} \begin{pmatrix}
	\frac 1 2\\
	\alpha+ \frac 1 2\end{pmatrix} \right) \\
& = f_{\sigma^2,\frac \pi 2}\left( \frac a N, \begin{pmatrix}
	a & b\\
	N & -k
\end{pmatrix} \begin{pmatrix}
	\frac 1 2\\
	\alpha+ \frac 1 2\end{pmatrix}\right). \end{alignat*}
Thus, 
\begin{alignat}{1}\label{eq:equi_eq}
 & \frac{1}{M}\sum_{\sigma\le M^\delta}\frac{1}{\sigma}\sum_{N\in\mathbb{Z}}\psi\left(\frac{\sigma N-M}{M^\eta}\right) \frac{1}{N} \\
 & \times  \sum_{\left(k,N\right)=1}\nu\left(\frac{k}{N}\right)\left|\Theta_{h}\left(\frac{k}{N}+\frac{i}{\sigma^2 N^{2}},0,\begin{pmatrix}
0\\
\alpha
\end{pmatrix}\right)\right|^{2} \nonumber \\
 &= \sum_{\sigma\le M^\delta}\frac{1}{M^{2}}\sum_{\begin{subarray}{c}
c,d\\ 
\left(c,d\right)=1
\end{subarray}}\nu\left(\frac{-d}{c}\right) f\left(\frac{M}{c}, \frac{a}{c},\begin{pmatrix}
 	a & b\\
 	c & d
 \end{pmatrix} \begin{pmatrix}
 \frac 1 2\\
 \alpha+ \frac 1 2 \end{pmatrix} \right),\nonumber
\end{alignat}
where $ (a,b) $ are any integers that solve the equation $ ad-bc=1$.

By Proposition \ref{prop:equi_prop} (applied with $ \nu (-x) $; recall the remark in the end of the proof of Proposition \ref{prop:equi_prop}), the right hand side of (\ref{eq:equi_eq}) is equal to
\begin{alignat*}{1}
&\frac{6}{\pi^{2}}\hat{\nu}\left(0\right)\left\Vert h\right\Vert _{2}^{2}\hat{\psi}(0)M^{-1+\eta}\sum_{\sigma\le M^\delta} \frac{1}{\sigma^2} \\
 & +O\left(L\left\Vert \nu\right\Vert _{C^{2}}M^{-\min\left(1/2,2/\kappa\right)+\delta}\sum_{\sigma\le M^\delta} \left\Vert f\right\Vert _{C^{8}}\right).
\end{alignat*}
The main term is equal to
\begin{alignat*}{1}
\hat{\nu}\left(0\right)\left\Vert h\right\Vert _{2}^{2}\hat{\psi}(0)M^{-1+\eta} +O\left(\hat{\nu}\left(0\right)\left\Vert h\right\Vert _{2}^{2}\hat{\psi}(0)M^{-1+\eta-\delta}\right).
\end{alignat*}

On the other hand, $ \left\Vert 
f\right\Vert _{C^{8}}=O\left(M^{8(1-\eta)}\left\Vert h\right\Vert _{C^{n}}^2 \left\Vert \psi \right\Vert _{C^{8}} \sigma\right).$
Thus, 
\begin{alignat*}{1}
 & \frac{1}{M^{\eta}}\sum_{\sigma\le M^\delta}\frac{1}{\sigma}\sum_{N\in\mathbb{Z}}\psi\left(\frac{\sigma N-M}{M^\eta}\right)\frac{1}{N}\\
 & \times \sum_{\left(k,N\right)=1}\nu\left(\frac{k}{N}\right)\left|\Theta_{h}\left(\frac{k}{N}+\frac{i}{\sigma^2 N^{2}},0,\begin{pmatrix}
0\\
\alpha
\end{pmatrix}\right)\right|^{2}=\hat{\psi}(0)\hat{\nu}\left(0\right)\left\Vert h\right\Vert _{2}^{2} \\
 & +O\biggl(\hat{\nu}\left(0\right)\left\Vert h\right\Vert _{2}^{2}\hat{\psi}(0)M^{-\delta} + L\left\Vert h\right\Vert _{C^{n}}^2 \left\Vert \nu\right\Vert _{C^{2}} \left\Vert \psi \right\Vert _{C^{8}} M^{9(1-\eta)-\min\left(1/2,2/\kappa\right)+3\delta}\biggr).
\end{alignat*}

Recalling Corollary \ref{cor:MainSum}, Proposition \ref{prop:KeyProp}
now follows from the condition  $\max\left(\frac{17}{18},1-\frac{2}{9\kappa}\right)<\eta \le 1$, assuming $ \delta $ is chosen sufficiently small.
\end{proof}

\section{\label{sec:Proof-of-the}Proof of the main theorems}

We are now able to prove the main theorems using Proposition \ref{prop:KeyProp}.

\subsection{Proof of Theorem \ref{thm:MainThmSmooth}}
\begin{proof}
Fix $\max\left(17/18,1-2/9\kappa\right)<\eta \le 1$.  By Proposition \ref{prop:KeyProp}, there exist $ s=s(\kappa,\eta)>0, n=n(\kappa,\eta)\in \mathbb{N} $ such that for for any $\nu\in C^{\infty}\left(\mathbb{R}\right)$ supported on $ \left[-L,L\right] $ $\left(L\ge1\right)$ and
$\psi,h\in C^{\infty}\left(\mathbb{R}\right)$ real valued and supported on $ \left[-1/2,3/2\right] $,
\begin{alignat}{1}\label{eq:proof2.2a}
\frac{1}{M^{\eta}}\sum_{N\in\mathbb{Z}}\psi\left(\frac{N-M}{M^\eta}\right)Q_{N}\left(\nu,h\right)& =\hat{\psi}(0)\hat{\nu}\left(0\right)\left\Vert h\right\Vert _{2}^{2}\\
& +O\left(L\left\Vert h\right\Vert _{C^{n}}^2\left\Vert \nu\right\Vert _{C^{2}} \left\Vert \psi \right\Vert _{C^{8}} M^{-s}\right). \nonumber
\end{alignat}
as $M\to\infty.$
By Poisson summation formula,
\begin{alignat}{1}\label{eq:proof2.2b}
&\frac{1}{M^{\eta}}\sum_{N\in\mathbb{Z}}\psi\left(\frac{N-M}{M^\eta}\right)R_{2,N}\left(\hat{\nu},h;\varphi_N\right) \\ &=\frac{1}{M^{\eta}}\sum_{N\in\mathbb{Z}}\psi\left(\frac{N-M}{M^\eta}\right)
  \left(\nu\left(0\right)\frac{1}{N^{2}}\left|\sum_{n\in\mathbb{Z}}h\left(\frac{n-\alpha}{N}\right)\right|^{2}+Q_{N}\left(\nu,h\right)\right)\nonumber 
\end{alignat}
Note that
\begin{equation}
\label{eq:proof2.2c}
\frac{1}{N^{2}}\left|\sum_{n\in\mathbb{Z}}h\left(\frac{n-\alpha}{N}\right)\right|^{2}=\left|\hat{h}(0)\right|^{2}+O\left(\left\Vert h\right\Vert _{C^1}^2N^{-1}\right)
\end{equation}
and
\begin{alignat}{1}\label{eq:proof2.2d}
\frac{1}{M^{\eta}}\sum_{N\in\mathbb{Z}}\psi\left(\frac{N-M}{M^\eta}\right) & =\hat{\psi}(0)+O\left(\left\Vert \psi'\right\Vert _{\infty}M^{-\eta}\right). 
\end{alignat}

Theorem \ref{thm:MainThmSmooth} now follows by substituting (\ref{eq:proof2.2a}), (\ref{eq:proof2.2c}) and (\ref{eq:proof2.2d}), into (\ref{eq:proof2.2b}).
\end{proof}

\subsection{Proof of Theorem \ref{thm:MainThm}}
\begin{proof}
Fix $\max\left(17/18,1-2/9\kappa\right)<\eta \le 1$. By Theorem \ref{thm:MainThmSmooth},
there exist $ t=t(\kappa,\eta)>0, n=n(\kappa,\eta)\in\mathbb{N} $ such that for any $\nu\in C^{\infty}\left(\mathbb{R}\right)$ supported on $ \left[-L,L\right] $ $\left(L\ge1\right)$ and
$\psi,h\in C^{\infty}\left(\mathbb{R}\right)$ real valued and supported on $ \left[-1/2,3/2\right] $,
\begin{alignat}{1}\label{eq:Unsmoothing}
	\frac{1}{M^{\eta}}\sum_{N\in\mathbb{Z}}\psi\left(\frac{N-M}{M^\eta}\right)R_{2,N}\left(\hat{\nu},h;\varphi_N\right) & =\hat{\psi}(0) \left(\nu\left(0\right)\left|\hat{h}(0)\right|^{2}+\hat{\nu}\left(0\right)\left\Vert h\right\Vert _{2}^{2}\right)\\
	& +O\left(L\left\Vert h\right\Vert _{C^{n}}^2\left\Vert \nu\right\Vert \nonumber _{C^{2}}  \left\Vert \psi \right\Vert _{C^{8}} M^{-t}\right)
\end{alignat}
as $M\to\infty$.

Fix a bounded interval $A$ and denote by $\chi_A$ its characteristic function. Given $\epsilon>0$, we can find smooth functions $\nu_{\epsilon,\pm}$
such that:
\begin{enumerate}[(i)]
\item $\nu_{\epsilon,\pm}$ are supported in $\left[-\epsilon^{-1},\epsilon^{-1}\right].$

\item $\hat{\nu}_{\epsilon,-}\le\chi_A\le\hat{\nu}_{\epsilon,+}$.

\item $\nu_{\epsilon,\pm}\left(0\right)=|A|+O\left(\epsilon\right).$

\item $\hat{\nu}_{\epsilon,\pm}\left(0\right)=\chi_A\left(0\right)+O\left(\epsilon\right).$

\item The derivative of any order $ (k\ge0) $ of $\nu_{\epsilon,\pm}$ is uniformly
bounded.

\end{enumerate}
See \cite[\S8]{Marklof} for a detailed construction.

Let $ \delta >0$, and choose $\psi_{\pm},h_{\pm}$ to be a smooth approximation to the characteristic function on the interval $\left[0,1\right]$
such that:
\begin{enumerate}[(i)] \item $0\le \psi_{\pm},h_{\pm}\le1.$ 
	
	\item $\psi_{\pm},h_{\pm}=1$ in $\left[M^{-\delta},1-M^{-\delta}\right]$.
	
	\item $\psi_{\pm},h_{\pm}=0$ in the complement of $\left[-M^{-\delta},1+M^{-\delta}\right]$. 
	
	\item  $ \psi_{\pm}^{(k)},h_{\pm}^{(k)}=O(M^{\delta k}) $ for the $ k $-th derivative $ (k\ge0) $.
\item  the following inequalities hold:

\begin{alignat*}{1}
&\frac{1}{M^{\eta}}\sum_{N\in\mathbb{Z}}\psi_{-}\left(\frac{N-M}{M^\eta}\right)R_{2,N}\left(\hat{\nu}_{\epsilon,-},h_{-};\varphi_N\right) \\
& \le \frac{1}{M^{\eta}}\sum_{M\le N\le M+M^{\eta}}R_{2,N}\left(A;\varphi_N\right)  + \chi_A(0) \\
& 
  \le\frac{1}{M^{\eta}}\sum_{N\in\mathbb{Z}}\psi_{+}\left(\frac{N-M}{M^\eta}\right)R_{2,N}\left(\hat{\nu}_{\epsilon,+},h_{+};\varphi_N\right).
\end{alignat*}

\end{enumerate}

Note that $\hat{\psi}_{\pm}(0)$, $\left|\hat{h}_{\pm}(0)\right|^{2}$, and $\left\Vert h_{\pm}\right\Vert _{2}^{2}$ are all of the form $1+O\left(M^{-\delta}\right).$
 
By (\ref{eq:Unsmoothing}),
\begin{alignat*}{1}
&\frac{1}{M^{\eta}}\sum_{N\in\mathbb{Z}}\psi_{\pm}\left(\frac{N-M}{M^\eta}\right)R_{2,N}\left(\hat{\nu}_{\epsilon,\pm},h_{\pm};\varphi_N\right)  =\nu_{\epsilon,\pm}\left(0\right)+\hat{\nu}_{\epsilon,\pm}\left(0\right)\\
& +O\left(M^{-\delta}+\epsilon^{-1} M^{-t+(2n+8)\delta}\right) =  |A|+ \chi_{A}(0)\\
& + O\left(M^{-\delta} + \epsilon+\epsilon^{-1}M^{-t+(2n+8)\delta}\right),
\end{alignat*}
and Theorem \ref{thm:MainThm} follows with $ s = t/(2n+10) $ by choosing $ \delta = t/(2n+10) $, and $ \epsilon= M^{-t/(2n+10)} $.
\end{proof}

\appendix

\section{\label{sec:Uniform Distribution of}Uniform distribution mod 1 of \texorpdfstring{$\varphi_N(n)$}{\texttheta n}}

We show that $ \varphi_N(n) $
is uniformly distributed mod $1$ (for any $\alpha$), and give an
explicit rate of decay (uniform in $ \alpha $) for its discrepancy
\[
D_{N}=\sup_{0\le a<b\le1}\left|\frac{1}{N}\sum_{n=1}^{N}\chi_{[a,b]}\left(\left\{\frac{\left(n-\alpha\right)^{2}}{2N}\right\}\right)-(b-a)\right|.
\]

\begin{prop}
	We have 
	\[
	D_{N}=O\left(\frac{\log N \left(\sqrt{\log N}+\tau(N)\right)}{\sqrt{N}}\right).
	\]
	where $ \tau(n) = \sum_{d\mid n} 1 $ is the divisor function, and the implied constant is independent of $ \alpha $.
\end{prop}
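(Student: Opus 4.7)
The natural route is via the Erd\H{o}s-Tur\'an inequality
\[
D_N \ll \frac{1}{K} + \sum_{k=1}^{K} \frac{1}{k} \left| \frac{1}{N} \sum_{n=1}^{N} e\!\left( k\, \varphi_N(n) \right) \right|,
\]
valid for any $K \in \mathbb{N}$. Expanding $\varphi_N(n) = \frac{n^2}{2N} - \frac{\alpha n}{N} + \frac{\alpha^2}{2N}$, the constant term contributes only a unimodular factor, so the task reduces to estimating the quadratic Weyl sum
\[
S_N(k) = \sum_{n=1}^{N} e\!\left( \frac{k}{2N} n^2 - \frac{k\alpha}{N} n \right)
\]
by a bound that is uniform in $\alpha$.

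The crucial observation is that Weyl differencing kills the $\alpha$-dependence. After squaring and changing variables to $h = n - m$, the linear term in $n$ produces only a phase of modulus one (depending on $h,\alpha$ but not on the summation variable $m$), so
\[
|S_N(k)|^2 \le N + 2 \sum_{h=1}^{N-1} \min\!\left( N,\, \left\| (k/N)\, h \right\|^{-1} \right).
\]
Writing $k/N = a/q$ in lowest terms with $q = N / \gcd(k,N)$, the standard Vinogradov-type bound
\[
\sum_{h \le H} \min\!\left( N,\, \| \theta h \|^{-1} \right) \ll \left( \frac{H}{q} + 1 \right)\bigl(N + q \log q\bigr),
\]
applied with $\theta = k/N$ and $H = N-1$, yields $|S_N(k)|^2 \ll N\bigl(\gcd(k,N) + \log N\bigr)$, hence
\[
|S_N(k)| \ll \sqrt{N\,\gcd(k,N)} + \sqrt{N \log N},
\]
with implied constants independent of the shift $\alpha$.

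Plugging this back into Erd\H{o}s-Tur\'an and stratifying $k \le K$ by $d = \gcd(k,N)$ (so $k = dm$ with $\gcd(m, N/d) = 1$ and $m \le K/d$) gives
\[
\sum_{k=1}^{K} \frac{\sqrt{\gcd(k,N)}}{k} = \sum_{d \mid N} \sqrt{d} \sum_{\substack{k \le K \\ \gcd(k,N) = d}} \frac{1}{k} \ll \log K \sum_{d \mid N} \frac{1}{\sqrt d} \le \tau(N) \log K,
\]
while the $\sqrt{N \log N}$ piece contributes $O\bigl(\sqrt{\log N}\, \log K / \sqrt N\bigr)$. Taking $K \asymp N$ then produces
\[
D_N \ll \frac{\log N \bigl( \tau(N) + \sqrt{\log N} \bigr)}{\sqrt N},
\]
as claimed. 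There is no serious obstacle; the only point that needs attention is verifying that every intermediate constant is independent of $\alpha$, which is forced precisely because Weyl differencing converts the linear phase $-k\alpha n/N$ into a unit-modulus factor that drops out of the $L^2$ estimate.
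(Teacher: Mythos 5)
Your proposal is correct and follows essentially the same route as the paper: Erd\H{o}s--Tur\'an, Weyl differencing (which eliminates the $\alpha$-dependence), the bound $|S_N(k)|^2\ll N\left(\gcd(k,N)+\log N\right)$, stratification of $k$ by $\gcd(k,N)$, and the choice $K\asymp N$. The only cosmetic difference is that you invoke the standard Vinogradov-type lemma for $\sum_{h\le H}\min\left(N,\|\theta h\|^{-1}\right)$, whereas the paper carries out the equivalent residue-class computation by hand.
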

\begin{proof}
	By Erd\H{o}s\textendash Turán inequality, there exists a constant
	$C>0$ such that for every integer $m$
	\[
	D_{N}\le C\left(\frac{1}{m}+\frac{1}{N}\sum_{k=1}^{m}\frac{1}{k}\left|\sum_{n=1}^{N}e\left(\frac{k\left(n-\alpha\right)^{2}}{2N}\right)\right|\right).
	\]
	By Weyl differencing,
	\begin{alignat}{1}
		\left|\sum_{n=1}^{N}e\left(\frac{k\left(n-\alpha\right)^{2}}{2N}\right)\right|^{2} & =N+2\mbox{Re}\left[\sum_{h=1}^{N-1}e\left(\frac{k}{2N}\left(h^{2}-2h\alpha\right)\right)\sum_{n=1}^{N-h}e\left(\frac{kh}{N}n\right)\right].\label{eq:WeylDiff}
	\end{alignat}
	The inner sum of (\ref{eq:WeylDiff}) is equal to $N-h$ whenever
	$\frac{N}{\left(k,N\right)}\mid h$; otherwise it is equal to $\frac{1-e\left(-\frac{kh^{2}}{N}\right)}{1-e\left(\frac{kh}{N}\right)}$.
	It follows that
	\begin{alignat*}{1}
		\left|\sum_{n=1}^{N}e\left(\frac{k\left(n-\alpha\right)^{2}}{2N}\right)\right|^{2} & \le N\left(k,N\right)+4\sum_{\begin{subarray}{c}
				1\le h<N\\
				\frac{N}{\left(k,N\right)}\nmid h
			\end{subarray}}\frac{1}{\left|1-e\left(\frac{kh}{N}\right)\right|}.
		\end{alignat*}
		We have
		\begin{alignat*}{1}
			\sum_{\begin{subarray}{c}
					1\le h<N\\
					\frac{N}{\left(k,N\right)}\nmid h
				\end{subarray}}\frac{1}{\left|1-e\left(\frac{kh}{N}\right)\right|} & =\sum_{\begin{subarray}{c}
				-\frac{N}{2\left(k,N\right)}<a\le\frac{N}{2\left(k,N\right)}\\
				a\ne0
			\end{subarray}}\frac{1}{\left|1-e\left(\frac{a}{N/\left(k,N\right)}\right)\right|}\sum_{\begin{subarray}{c}
			1\le h<N\\
			\frac{k}{\left(k,N\right)}h\equiv a\,\left(\frac{N}{\left(k,N\right)}\right)
		\end{subarray}}1\\
		& \le\frac{1}{4}N\sum_{\begin{subarray}{c}
				-\frac{N}{2\left(k,N\right)}<a\le\frac{N}{2\left(k,N\right)}\\
				a\ne0
			\end{subarray}}\frac{1}{\left|a\right|}\ll N\log N
		\end{alignat*}
		and therefore 
		\begin{alignat*}{1}
			D_{N} & \ll\frac{1}{m}+\frac{1}{\sqrt{N}}\sum_{k=1}^{m}\frac{\sqrt{\log N}+\sqrt{\left(k,N\right)}}{k} \\ 
			& \ll \frac{1}{m}+\frac{\sqrt{\log N} \log m}{\sqrt{N}}+\frac{1}{\sqrt{N}}\sum_{\sigma\mid N}\frac{1}{\sqrt{\sigma}}\sum_{\begin{subarray}{c}
					1\le k\le m/\sigma\\
					\left(k,N/\sigma\right)=1
				\end{subarray}}\frac{1}{k} \\
				& \ll\frac{1}{m}+\frac{\sqrt{\log N}+ \tau(N)}{\sqrt{N}}\log m\ll\frac{\log N \left(\sqrt{\log N}+\tau(N)\right)}{\sqrt{N}}
			\end{alignat*}
			choosing $m=N$.\end{proof}
		
\section{\label{sec:MoreGeneral}More general pair correlation functions}

We consider slightly more general pair correlation measures, which also consider a general truncation of the indices $i,j$: For bounded intervals $A,B_1,B_2\subset\mathbb{R}$ set
\begin{multline}\label{generalpair}
	R_{2,N}(A,B_1,B_2;\varphi) \\
	= \frac{\#\left\{ (i,j)\in {\mathbb Z}^2\mid i\neq j , \; \varphi(i)-\varphi(j) \in N^{-1} A + \mathbb{Z},\  (\frac{i}{N}, \frac{j}{N})\in B_1\times B_2 \right\}}{N} .
\end{multline}

Our original pair correlation measure \eqref{eq:R2Def} is recovered by setting $B_1=B_2=(0,1]$. We have the following generalization of Theorem \ref{thm:MainThm}:

\begin{thm}
\label{thm:MainThmGeneralized} Choose $\varphi_N$ as in \eqref{ours}, with $\alpha$ Diophantine of type $\kappa$,
and fix $$\eta\in\left( \max\left(\frac{17}{18},1-\frac{2}{9\kappa}\right),1\right].$$
There exists $s=s(\kappa,\eta)>0$ such that for any bounded intervals $A,B_1,B_2\subset\mathbb{R}$
\begin{equation*}
\frac{1}{M^{\eta}}\sum_{M\le N\le M+M^{\eta}}R_{2,N}(A,B_1,B_2;\varphi_N)=|A|\,|B_1|\,| B_2| +O\left(M^{-s}\right)
\end{equation*}
as $M\to\infty$ (the implied constant in the remainder depends on $\alpha,\eta,A,B_1$ and $B_2$).
\end{thm}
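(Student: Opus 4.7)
The plan is to adapt the proof of Theorem \ref{thm:MainThm} to the bilinear setting, where the single Schwartz function $h$ is replaced by a pair $(h_1, h_2)$ ultimately chosen to approximate $\chi_{B_1}$ and $\chi_{B_2}$. Concretely, I would introduce
\[ R_{2,N}(f, h_1, h_2; \varphi) = \frac{1}{N}\sum_{m\in\mathbb{Z}}\sum_{i,j\in\mathbb{Z}} h_1\!\left(\frac{i-\alpha}{N}\right) h_2\!\left(\frac{j-\alpha}{N}\right) f(N(\varphi(i)-\varphi(j)+m)) \]
and the corresponding bilinear quantity $Q_N(\nu, h_1, h_2)$ obtained by inserting $h_1$ and $h_2$ into the two theta-type sums in the definition of $Q_N$. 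Applying Poisson summation in $m$ and isolating the $k = 0$ contribution produces a bilinear version of equation \eqref{eq:proof2.2b} in which the diagonal piece reads $\frac{\nu(0)}{N^2}\bigl(\sum_n h_1(\tfrac{n-\alpha}{N})\bigr)\bigl(\sum_n h_2(\tfrac{n-\alpha}{N})\bigr)$.

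The central task is to generalize Lemma \ref{lem:cuspEst}, Corollary \ref{cor:MainSum} and Proposition \ref{prop:KeyProp} from $|\Theta_h|^2$ to $\Theta_{h_1}\overline{\Theta_{h_2}}$. These generalizations rest on three properties: (i) the $\tilde{\Gamma}$-invariance of $\Theta_{h_1}\overline{\Theta_{h_2}}$; (ii) the identity $\int_{\mathbb{T}^2}\Theta_{h_1}\overline{\Theta_{h_2}}\, d\xi = \langle h_1, h_2\rangle_{L^2}$; and (iii) a pointwise bound analogous to \eqref{eq:ThetaEstimate}. For (i), the automorphic factor that $\Theta_f$ acquires under the left action of $\tilde{\Gamma}$ depends only on the group element, not on the Schwartz function, so the two factors cancel in the product. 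For (ii), expanding both sums, integrating $\xi_1 \in [0,1]$ to collapse to the diagonal in the $n$-sum, then changing variables in $\xi_2$ and invoking unitarity of $U^\phi$ gives the stated pairing. For (iii), the Fourier derivation of \eqref{eq:ThetaEstimate} goes through unchanged, with the dominating Schwartz constant replaced by a product controlled by $\|h_1\|_{C^n}\|h_2\|_{C^n}$. With these in hand, the reduction to the geometric equidistribution Proposition \ref{prop:equi_prop} proceeds verbatim: the bilinear product defines a smooth function on $\Gamma\backslash G$ with controlled Sobolev norms, and Str\"ombergsson's theorem is blind to whether the function came from $|\Theta_h|^2$ or $\Theta_{h_1}\overline{\Theta_{h_2}}$. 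The upshot is a bilinear smooth version of Theorem \ref{thm:MainThmSmooth} with main term $\hat{\psi}(0)\bigl(\nu(0)\hat{h}_1(0)\hat{h}_2(0) + \hat{\nu}(0)\langle h_1, h_2\rangle\bigr)$.

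It then remains to un-smooth. Following the sandwich scheme at the end of the proof of Theorem \ref{thm:MainThm}, I would approximate $\chi_A$ by $\hat{\nu}_{\epsilon,\pm}$ and $\chi_{B_1}, \chi_{B_2}$ by smooth sandwich functions $h_1^{\pm}, h_2^{\pm}$ supported on $M^{-\delta}$-neighborhoods of the respective intervals. The one subtle point is the diagonal correction: the sharp count $R_{2,N}(A, B_1, B_2; \varphi_N)$ excludes $i = j$, whereas the smooth $R_{2,N}(\hat{\nu}, h_1, h_2; \varphi_N)$ includes it, and the diagonal produces a spurious contribution of approximately $\hat{\nu}(0)\langle h_1, h_2\rangle \approx \chi_A(0)|B_1\cap B_2|$. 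After accounting for this bookkeeping, the remaining main term collapses to $\nu(0)\hat{h}_1(0)\hat{h}_2(0) \approx |A|\,|B_1|\,|B_2|$, as required. I anticipate no new obstacle beyond what is handled in the proof of Theorem \ref{thm:MainThm}; the only genuinely new check is the $\tilde{\Gamma}$-invariance of $\Theta_{h_1}\overline{\Theta_{h_2}}$ noted above, and the propagation of the bilinear Sobolev norm $\|h_1\|_{C^n}\|h_2\|_{C^n}$ through the Jacobi-theta estimates and Marklof's Lemma \ref{lem:MarklofLemma}.
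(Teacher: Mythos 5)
Your proposal follows essentially the same route as the paper: it too introduces the unequally weighted smooth pair correlation $R_{2,N}(f,h_1,h_2;\varphi)$, observes that the $\tilde{\Gamma}$-invariance, the pairing identity $\int_{\mathbb{T}^2}\Theta_{h_1}\overline{\Theta_{h_2}}\,\mbox{d}\xi=\langle h_1,h_2\rangle$ and the cusp approximation carry over verbatim from $|\Theta_h|^2$ to $\Theta_{h_1}\overline{\Theta_{h_2}}$, deduces the bilinear smooth theorem with main term $\hat{\psi}(0)\bigl(\nu(0)\hat{h}_1(0)\hat{h}_2(0)+\hat{\nu}(0)\langle h_1,h_2\rangle\bigr)$, and then unsmooths by the approximation argument of Section \ref{sec:Proof-of-the} (the $\alpha$-shift in the truncation being harmless since the approximation scale exceeds $1/N$). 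Your diagonal bookkeeping $\hat{\nu}(0)\langle h_1,h_2\rangle\approx\chi_A(0)\,|B_1\cap B_2|$ is precisely the analogue of the $\chi_A(0)$ correction in the paper's sandwich step, so the argument is correct as proposed.
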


By the same approximation arguments of Section  \ref{sec:Proof-of-the} (since the scale of the approximation is larger than $ 1/N $), Theorem \ref{thm:MainThmGeneralized} also holds if we add $ \alpha $ shifts to the summation range in (\ref{generalpair}), i.e., if we define

\begin{alignat}{1}\label{altgeneralpair}
	&R_{2,N}(A,B_1,B_2;\varphi) \\
	 &= \frac{\#\left\{ (i,j)\in {\mathbb Z}^2\mid i\neq j , \varphi(i)-\varphi(j) \in N^{-1} A + \mathbb{Z},  \left(\frac{i-\alpha}{N}, \frac{j-\alpha}{N}\right)\in B_1\times B_2 \right\}}{N}.   \nonumber
\end{alignat}

The proof of Theorem \ref{thm:MainThmGeneralized} goes along similar lines of the proof of Theorem \ref{thm:MainThm}. First, for $ f,h_1,h_2\in \mathcal{S}(\mathbb{R}) $ we define a more general, unequally weighted smooth pair correlation function:

\begin{multline}
\label{eq:SmoothGenPairCorr}R_{2,N}\left(f,g_1,g_2;\varphi\right) \\
=\frac{1}{N}\sum_{m\in\mathbb{Z}}\sum_{i,j\in\mathbb{Z}}h_1\left(\frac{i-\alpha}{N}\right)h_2\left(\frac{j-\alpha}{N}\right)f\left(N\left(\varphi(i)-\varphi(j)+m\right)\right).
\end{multline}

Note that the properties of Section \ref{sec:Jacobi} regarding the absolute square of the Jacobi theta sum hold in fact more generally for $ \Theta_f\left(\tau,\phi;\xi\right) \overline{\Theta_g \left(\tau,\phi;\xi\right)} $ where $ f,g\in\mathcal{S}(\mathbb{R}) $ (see \cite{Marklof}), i.e., $ \Theta_f\left(\tau,\phi;\xi\right) \overline{\Theta_g \left(\tau,\phi;\xi\right)} $ is invariant under the left action of $ \tilde{\Gamma} $; we have 
\[
\int_{\mathbb{T}^{2}}\Theta_f\left(\tau,\phi;\xi\right) \overline{\Theta_g \left(\tau,\phi;\xi\right)}\,\mbox{d}\xi=\left\langle f,g \right\rangle,
\]
and for all $ y\ge 1/2 $, $ A>1 $
\begin{alignat*}{1}
\Theta_f\left(\tau,\phi;\xi\right) \overline{\Theta_g \left(\tau,\phi;\xi\right)}  & =y^{1/2}\sum_{m\in\mathbb{Z}}f_{\phi}\left(y^{1/2}\left(m-\xi_{2}\right)\right)\overline{g_{\phi}\left(y^{1/2}\left(m-\xi_{2}\right)\right)}\\
&+O\left(C_{A,f_\phi,g_\phi} y^{(-A+1)/2}\right)
\end{alignat*}
where  $ C_{A,f_\phi,g_\phi} = \sup_{x\in \mathbb R} \left| \left (1+\left|x\right|\right)^{2A} f_\phi(x) g_\phi(x) \right |$, and the error term is uniform in $x,\phi,\xi$.

Thus, the generalization of Theorem \ref{thm:MainThmSmooth} to the more general smooth pair correlation functions (\ref{eq:SmoothGenPairCorr}) is straightforward.

\begin{thm}
\label{thm:MainThmSmooth22}
Under the assumptions of Theorem \ref{thm:MainThmSmooth}, where in addition $h_1,h_2\in C^{\infty}\left(\mathbb{R}\right)$ are real valued and compactly supported,
\begin{alignat*}{1}
\frac{1}{M^{\eta}}\sum_{N\in\mathbb{Z}} & \psi\left(\frac{N-M}{M^\eta}\right)R_{2,N}\left(\hat{\nu},h_1,h_2;\varphi_N\right) \\
& = \hat{\psi}(0)\left(\nu\left(0\right) \hat{h}_1(0) \hat{h}_2(0) +\hat{\nu}\left(0\right)\left\langle h_1,h_2\right\rangle \right) \\
 & +O\left(L\left\Vert h_1\right\Vert _{C^{n}}\left\Vert h_2\right\Vert _{C^{n}}\left\Vert \nu\right\Vert _{C^{2}} \left\Vert \psi \right\Vert _{C^{8}} M^{-s}\right)
\end{alignat*}
as $M\to\infty$ (the implied constant in the remainder depends on $\alpha,\eta$ and the supports of $ h_1,h_2 $).
\end{thm}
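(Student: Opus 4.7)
The plan is to follow the proof of Theorem \ref{thm:MainThmSmooth} line by line, with $|\Theta_h|^2$ replaced throughout by the product $\Theta_{h_1}\overline{\Theta_{h_2}}$ and with the noted analogous properties (invariance under $\tilde\Gamma$, integral over $\mathbb{T}^2$ equal to $\langle h_1,h_2\rangle$, and the cuspidal approximation for $y\ge 1/2$) substituted at the appropriate places. First, I would introduce the generalized off-diagonal quantity
\[
Q_N(\nu,h_1,h_2)=\frac{1}{N}\sum_{k\ne 0}\nu\!\left(\frac{k}{N}\right)\Theta_{h_1}\!\left(\tfrac{k}{N}+\tfrac{i}{N^{2}},0,\begin{pmatrix}0\\\alpha\end{pmatrix}\right)\overline{\Theta_{h_2}\!\left(\tfrac{k}{N}+\tfrac{i}{N^{2}},0,\begin{pmatrix}0\\\alpha\end{pmatrix}\right)},
\]
which arises by Poisson summation from the off-diagonal contribution ($m\ne 0$ terms in $R_{2,N}(\hat\nu,h_1,h_2;\varphi_N)$ after expanding $\hat\nu$ via its Fourier transform), exactly as in the $h_1=h_2$ case.

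Next, I would run the cuspidal truncation argument of Lemma \ref{lem:cuspEst} for this generalized $Q_N$. The divisor-decomposition step and the invariance under $\tilde{\Gamma}$ are identical; the estimate on each $\sigma\mid N$ piece uses the new approximation
$\Theta_{h_1}\overline{\Theta_{h_2}}=y^{1/2}\sum_m (h_1)_\phi\overline{(h_2)_\phi}+O(y^{(-A+1)/2})$, and I would control the remainder via $|\hat h_1(z)\overline{\hat h_2(z)}|\le \tfrac{1}{2}(|\hat h_1(z)|^2+|\hat h_2(z)|^2)$ (or just Cauchy--Schwarz across the Lemma \ref{lem:MarklofLemma} application), which converts the $\lVert h\rVert_{C^n}^2$ bounds into $\lVert h_1\rVert_{C^n}\lVert h_2\rVert_{C^n}$. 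This produces the generalized Corollary \ref{cor:MainSum}.

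Third, I would carry out the equidistribution step of \S\ref{sec:Prop2.2Proof}, using Proposition \ref{prop:equi_prop} applied separately to the real and imaginary parts of the (now complex-valued, but still smooth and appropriately bounded) function on $\Gamma\backslash G$ obtained by periodizing $\omega(w)\Theta_{h_1}\overline{\Theta_{h_2}}$. The only substantive modification to the main-term computation is that the Heisenberg integral
\[
\int_{\mathbb{T}^2}\Theta_{h_1}(\tau,\phi;\xi)\overline{\Theta_{h_2}(\tau,\phi;\xi)}\,\mathrm{d}\xi=\langle h_1,h_2\rangle
\]
replaces $\lVert h\rVert_2^2$, so the generalized Proposition \ref{prop:KeyProp} reads
\[
\frac{1}{M^{\eta}}\sum_{N\in\mathbb{Z}}\psi\!\left(\frac{N-M}{M^\eta}\right)Q_N(\nu,h_1,h_2)=\hat\psi(0)\hat\nu(0)\langle h_1,h_2\rangle+O\!\left(L\lVert h_1\rVert_{C^n}\lVert h_2\rVert_{C^n}\lVert\nu\rVert_{C^2}\lVert\psi\rVert_{C^8}M^{-s}\right).
\]

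Finally, I would add back the diagonal contribution ($m=0$) by the same Poisson summation identity used in the proof of Theorem \ref{thm:MainThmSmooth}, obtaining
\[
\frac{1}{N^{2}}\sum_{n\in\mathbb Z}h_1\!\left(\tfrac{n-\alpha}{N}\right)\sum_{n\in\mathbb Z}h_2\!\left(\tfrac{n-\alpha}{N}\right)=\hat h_1(0)\hat h_2(0)+O\!\left(\lVert h_1\rVert_{C^1}\lVert h_2\rVert_{C^1}N^{-1}\right),
\]
since $h_1,h_2$ are real-valued. Combined with $\frac{1}{M^\eta}\sum_N\psi((N-M)/M^\eta)=\hat\psi(0)+O(M^{-\eta})$, this yields the claimed asymptotic. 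The ``main obstacle'' is purely bookkeeping: confirming that the unitarity-based identities of \S\ref{sec:Jacobi} and the $\tilde\Gamma$-invariance go through for the product $\Theta_{h_1}\overline{\Theta_{h_2}}$ (asserted in \cite{Marklof}), and that the complex-valued function $F$ in the equidistribution step still satisfies $\lVert F\rVert_{C^8_b}\ll\lVert h_1\rVert_{C^n}\lVert h_2\rVert_{C^n}$. No new ideas beyond those in the proof of Theorem \ref{thm:MainThmSmooth} are required.
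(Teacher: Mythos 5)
Your proposal follows exactly the route the paper intends: it notes that the $\tilde\Gamma$-invariance, the $\mathbb{T}^2$-integral identity $\int\Theta_{h_1}\overline{\Theta_{h_2}}\,\mathrm{d}\xi=\langle h_1,h_2\rangle$, and the cuspidal approximation all persist for the product $\Theta_{h_1}\overline{\Theta_{h_2}}$, and then reruns the proof of Theorem \ref{thm:MainThmSmooth} (generalized $Q_N$, Lemma \ref{lem:cuspEst} with a Cauchy--Schwarz step to get $\left\Vert h_1\right\Vert_{C^n}\left\Vert h_2\right\Vert_{C^n}$, the equidistribution step, then the $k=0$ Poisson term) verbatim, which is precisely the paper's argument. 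The only quibble is cosmetic: the term you restore at the end is the $k=0$ term of the Poisson-dual sum rather than the ``$m=0$'' term, but the formula you write for it is the correct one.
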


Theorem \ref{thm:MainThmGeneralized} follows by the same approximation arguments of Section \ref{sec:Proof-of-the}.

\section{\label{sec:LongAverages}Long averages}

We show in this appendix that averages of $R_{2,N}$ over long intervals have Poisson
statistics for the sequence $\frac{\beta\left(n-\alpha\right)^{2}}{2N}$ $ (\alpha,\beta \in \mathbb{R}, \beta\ne 0) $
assuming either $\alpha$ or $\beta$ is Diophantine. This follows from the quantitative version of the Oppenheim conjecture for quadratic forms of signature $ (2,2) $ \cite{EskinMargulisMozes,MargulisMohammadi}.

We work with the general pair correlation measure \eqref{generalpair} introduced in Appendix \ref{sec:MoreGeneral}; the same argument will also work for the pair correlation measure \eqref{altgeneralpair}.

\begin{thm}
	\label{thm:LongThm}Fix $\alpha,\beta\in\mathbb{R}$ such that $ \beta\ne0 $ and either
	$\alpha$ or $\beta$ is Diophantine. If $\alpha\notin\frac12\mathbb Z$, then for any bounded intervals $A,B_1,B_2\subset\mathbb{R}$
	\[
	\lim_{M\to\infty}\frac{1}{M}\sum_{N=M}^{2M}R_{2,N}\left(A,B_1,B_2;\varphi_N\right)=|A|\,|B_1|\,|B_2|.
	\]
	If $\alpha\in\frac12 \mathbb Z$, then the above holds provided $B_1,B_2\subset \mathbb R_{\geq 0}$.
\end{thm}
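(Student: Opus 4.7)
The strategy is to reinterpret the double sum as a weighted count of integer lattice points in $\mathbb{Z}^4$ subject to a quadratic Diophantine condition. Expanding
\[
\varphi_N(i)-\varphi_N(j)=\tfrac{\beta}{2N}\bigl[(i-\alpha)^2-(j-\alpha)^2\bigr],
\]
the defining condition of $R_{2,N}(A,B_1,B_2;\varphi_N)$ becomes $\Phi(i,j,N,m)\in A$ for some $m\in\mathbb{Z}$, where
\[
\Phi(i,j,N,m):=\tfrac{\beta}{2}\bigl[(i-\alpha)^2-(j-\alpha)^2\bigr]-Nm=Q(i-\alpha,\,j-\alpha,\,N,\,m),
\]
and $Q(x_1,x_2,x_3,x_4)=\tfrac{\beta}{2}(x_1^2-x_2^2)-x_3x_4$ is a nondegenerate indefinite quadratic form on $\mathbb{R}^4$ of signature $(2,2)$. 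The pair $(Q,\xi)$ with $\xi=(\alpha,\alpha,0,0)$ is then an inhomogeneous indefinite quadratic form of precisely the type addressed by Margulis--Mohammadi \cite{MargulisMohammadi}.

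I would rewrite the left-hand side as
\[
\sum_{N=M}^{2M}R_{2,N}(A,B_1,B_2;\varphi_N)=\sum_{\substack{(i,j,N,m)\in\mathbb{Z}^4\\ i\neq j}}\tfrac{1}{N}\,\mathbf{1}_{[M,2M]}(N)\,\mathbf{1}_{B_1}\!\bigl(\tfrac{i}{N}\bigr)\mathbf{1}_{B_2}\!\bigl(\tfrac{j}{N}\bigr)\mathbf{1}_A(\Phi),
\]
smooth the cut-offs, and apply the quantitative Oppenheim theorem for $(Q,\xi)$ to each smoothed piece. The required irrationality hypothesis holds in both cases of the theorem: if $\beta$ is Diophantine then $Q$ itself is irrational (not proportional to a form over $\mathbb{Q}$) and \cite{EskinMargulisMozes} applies after partitioning $\xi+\mathbb{Z}^4$ into finitely many cosets; if $\beta\in\mathbb{Q}$ and $\alpha$ is Diophantine, then $Q$ is rational but the shift $\xi$ carries the genuine irrationality required to invoke \cite{MargulisMohammadi}.

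The main term is then the corresponding Lebesgue integral. Since $\partial\Phi/\partial m=-N$, parametrizing the hypersurface $\{\Phi=0\}$ by $(i,j,N)$ yields the Leray measure $di\,dj\,dN/N$, so
\[
|A|\int_M^{2M}\frac{dN}{N^2}\iint_{i/N\in B_1,\,j/N\in B_2}di\,dj=|A||B_1||B_2|\int_M^{2M}dN=M\,|A||B_1||B_2|,
\]
and dividing by $M$ gives the claimed limit $|A||B_1||B_2|$.

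The principal obstacle I foresee is handling the degenerate locus $x_1=\pm x_2$ of $Q$, which in the original coordinates corresponds to $i=j$ (already excluded) and $i+j=2\alpha$. The latter equation has no integer solutions when $2\alpha\notin\mathbb{Z}$, but when $\alpha\in\tfrac{1}{2}\mathbb{Z}$ it produces a family of ``trivial'' pairs $(i,2\alpha-i)$ along which $\varphi_N(i)-\varphi_N(j)=0$ identically; these would contribute a spurious Dirac atom to $R_{2,N}$ at $0\in A$ that grows linearly in $N$. The hypothesis $B_1,B_2\subset\mathbb{R}_{\ge0}$ in this edge case restricts the index range so that only $O(1)$ such trivial pairs survive for each $N$, and their total contribution is absorbed into the error. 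Verifying that these are the only obstructions, and that the application of \cite{EskinMargulisMozes}/\cite{MargulisMohammadi} together with the smoothing of the sharp cut-offs passes through uniformly in $M$, constitutes the remaining technical content of the proof.
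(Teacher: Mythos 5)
Your reduction is the same as the paper's: you rewrite the averaged pair correlation as a count of integer points $x=(i,j,N,m)$ with $Q_\xi(x)\in A$, where $Q(x)=\frac{\beta}{2}(x_1^2-x_2^2)+x_3x_4$ (your sign on $x_3x_4$ is an immaterial relabelling of $m$) and $\xi=(\alpha,\alpha,0,0)$, weighted by $f(x_1/M,x_2/M,x_3/M)$ with $f(x_1,x_2,x_3)=x_3^{-1}\chi_{B_1}(x_1/x_3)\chi_{B_2}(x_2/x_3)\chi_{[1,2]}(x_3)$; you then apply the quantitative Oppenheim theorem for signature $(2,2)$ after smoothing the sharp cut-offs, and identify the main term as the Leray/volume integral giving $|A|\,|B_1|\,|B_2|$. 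Your treatment of the degenerate locus $x_1=\pm x_2$ --- the excluded diagonal $i=j$, and the pairs $i+j=2\alpha$, which exist only when $\alpha\in\frac12\mathbb{Z}$ and are cut down to $O(1)$ pairs per $N$ by the hypothesis $B_1,B_2\subset\mathbb{R}_{\geq 0}$ --- is exactly the paper's discussion of the exceptional subspaces of $Q_\xi$.

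The one step that does not work as stated is the appeal to Eskin--Margulis--Mozes in the case ``$\beta$ Diophantine, after partitioning $\xi+\mathbb{Z}^4$ into finitely many cosets'': when $\alpha$ is irrational (which is allowed in that case), the affine lattice $\mathbb{Z}^4-\xi$ is not a finite union of cosets of a sublattice of $\mathbb{Z}^4$, so there is no such reduction to the homogeneous counting theorem. Your dichotomy also omits the case of $\beta$ irrational but not Diophantine with $\alpha$ Diophantine, where the homogeneous $(2,2)$ result need not apply to $Q$ at all (for signature $(2,2)$ one needs a Diophantine condition, not mere irrationality). Both issues disappear if you do what the paper does: invoke Margulis--Mohammadi's Theorem 1.9 uniformly in all cases, observing that the inhomogeneous form $Q_\xi$ is Diophantine in the sense of their Definition 1.8 as soon as either $\alpha$ or $\beta$ is Diophantine. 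With that substitution (and the star-shaped-region approximation the paper uses to justify the sharp cut-offs), your argument coincides with the paper's proof.
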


In particular, Theorem \ref{thm:LongThm} holds for a Diophantine
$\alpha$ and $\beta=1$. 

\begin{proof}
	Define the signature $\left(2,2\right)$ quadratic form
	\[
	Q\left(x_{1},x_{2},x_{3},x_{4}\right)=\frac{\beta}{2}\left(x_{1}^{2}-x_{2}^{2}\right)+x_{3}x_{4}.
	\]
	Let $\xi=\left(\alpha,\alpha,0,0\right)$ and define $Q_{\xi}\left(x\right)=Q\left(x-\xi\right).$
	
	Note that
	\begin{multline}
	\frac{1}{M}\sum_{N=M}^{2M}R_{2,N}\left(A,B_1,B_2;\varphi_N\right)= \\ \nonumber
	\frac{1}{M}\sum_{N=M}^{2M}\frac{1}{N}\sum_{m\in\mathbb{Z}}\sum_{\begin{subarray}{c} (\frac{i}{N}, \frac{j}{N})\in B_1\times B_2 \\ i\ne j \end{subarray}}\chi_{A}\left(Q_{\xi}\left(i,j,N,m\right)\right).
	\end{multline}
	Let 
	\[
	f\left(x_{1},x_{2},x_{3}\right)=\frac{1}{x_3}\chi_{B_1}\left(\frac{x_{1}}{x_{3}}\right)\chi_{B_2}\left(\frac{x_{2}}{x_{3}}\right)\chi_{\left[1,2\right]}\left(x_{3}\right).
	\]
	Then
	\[
	\frac{1}{M}\sum_{N=M}^{2M}R_{2,N}\left(A,B_1,B_2;\varphi_N\right)=\frac{1}{M^{2}}\sum_{\begin{subarray}{c}
		x\in\mathbb{Z}^{4}\\
		x_{1}\ne x_{2}
		\end{subarray}}f\left(\frac{x_{1}}{M},\frac{x_{2}}{M},\frac{x_{3}}{M}\right)\chi_{A}\left(Q_{\xi}\left(x\right)\right).
	\]
	Since either $\alpha$ or $\beta$ is Diophantine, the form $Q_{\xi}$
	is Diophantine (recall \cite[Definition 1.8]{MargulisMohammadi}). Moreover, let $x\in\mathbb{Z}^{4}$
	with $\left(\frac{x_1}{x_3}, \frac{x_2}{x_3}\right)\in B_1\times B_2$, $M\le x_{3}\le2M$ and $Q_{\xi}\left(x\right) \in A$.
	If $\alpha\notin\frac12\mathbb Z$, then $x$ belongs to an exceptional subspace of $Q_{\xi}$ if and
	only if $x_{1}=x_{2}$. If $\alpha\in\frac12\mathbb Z$, then  $x$ belongs to an exceptional subspace of $Q_{\xi}$ if and
	only if $x_{1}=x_{2}$ or $ x_1+x_2= 2\alpha $; since in that case we assume that $B_1,B_2 \subset \mathbb R_{\geq 0}$, the condition $ x_1+x_2= 2\alpha $ can  only occur for a bounded number of $ x_1,x_2 $ whose contribution to the sum is $ o(1) $.
	
	Since for any $ 0 \le c_1 < c_2 $ and $ C>0 $ the set \[ \left\{x\in \mathbb{R}^4: \left(\frac{x_1}{x_3}, \frac{x_2}{x_3}\right)\in B_1\times B_2, c_1 \le x_3 \le c_2, |x_4|\le C \right\} \] can be written as a difference of two star-shaped regions, we deduce from \cite[Theorem 1.9]{MargulisMohammadi} using a standard approximation argument that
	\begin{multline}
		\frac{1}{M}\sum_{N=M}^{2M}R_{2,N}\left(A,B_1,B_2;\varphi_N\right) \sim \frac{1}{M^{2}}\int_{\mathbb{R}^{4}}f\left(\frac{x_{1}}{M},\frac{x_{2}}{M},\frac{x_{3}}{M}\right)\chi_{A}\left(Q_{\xi}\left(x\right)\right)\,\mbox{d}x \\
		=\frac{1}{M} \int_M^{2M} \, \frac{ \mbox{d}x_3 }{x_3} \int_{x_3 B_1} \,\mbox{d}x_1 \int_{x_3 B_2}\,\mbox{d}x_2 \int_{Q_{\xi}\left(x\right) \in A} \,\mbox{d}x_4 
		= |A||B_1||B_2|. \nonumber
	\end{multline}
\end{proof}

\section{\label{sec:Equidistribution-on-Heisenberg}Conjecture \ref{conj:EquiConj} implies \texorpdfstring{$R_{2,N}$}{R2,N} is Poisson}

We show in this appendix that, assuming Conjecture \ref{conj:EquiConj}, the pair correlation measure $R_{2,N}$ converges weakly to Lebesgue measure (without averaging on $N$).

\subsection{Proof of Proposition \ref{prop:ConjecturedProp}}
\begin{proof}
By an approximation argument similar to that of \S\ref{sec:Proof-of-the},
it is enough to show that there exist $l\in\mathbb{N}, \kappa_0\ge 2 $, such that for any Diophantine $ \alpha $ of type $ \kappa\le\kappa_0 $, there exist $ s=s(\kappa)>0, n=n(\kappa)\in \mathbb{N} $ such that for any $\nu\in C^{\infty}\left(\mathbb{R}\right)$ supported on $ \left[-L,L\right] $ $\left(L\ge1\right)$,
$h\in C^{\infty}\left(\mathbb{R}\right)$ real valued and supported on $ \left[-1/2,3/2\right] $,
\begin{equation}
Q_{N}\left(\nu,h\right)=\hat{\nu}\left(0\right)\left\Vert h\right\Vert _{2}^{2}+O\left(L\left\Vert h\right\Vert _{C^n}^2 \left\Vert \nu\right\Vert _{C^{l}}N^{-s}\right)\label{eq:PointwiseProp}
\end{equation}
as $ N\to\infty $.

To prove (\ref{eq:PointwiseProp}), recall that by Lemma \ref{lem:cuspEst}, for $ 0<\delta<\frac{\kappa-1}{\kappa} $ and for $ 0<\epsilon<\frac{\delta}{\kappa-1} $, there exists $ n=n(\kappa,\delta,\epsilon)\in\mathbb{N} $ such that for any $\nu\in C^{\infty}\left(\mathbb{R}\right)$ supported on $ \left[-L,L\right] $ $\left(L\ge1\right)$,
$h\in C^{\infty}\left(\mathbb{R}\right)$ real valued and supported on $ \left[-1/2,3/2\right] $,
\begin{alignat*}{1}
	Q_{N}\left(\nu,h\right) & =\frac{1}{N}\sum_{\begin{subarray}{c}
			\sigma\mid N\\
			\sigma\le N^{\delta}
	\end{subarray}}\sum_{\left(k,N/\sigma\right)=1}\nu\left(\frac{k}{N/\sigma}\right)\left|\Theta_{h}\left(\frac{k}{N/\sigma}+\frac{i}{N^{2}},0,\begin{pmatrix}
		0\\
		\alpha
	\end{pmatrix}\right)\right|^{2}\\
	& +O\left(L\left\Vert h\right\Vert _{C^n}^2\left\Vert \nu\right\Vert _{\infty}N^{-\frac{\delta}{\kappa-1}+\epsilon}\right)
\end{alignat*}
as $ N\to \infty $.

In addition,
\begin{alignat}{1}
 & \frac{1}{N}\sum_{\begin{subarray}{c}
\sigma\mid N\\
\sigma\le N^\delta
\end{subarray}}\sum_{\left(k,N/\sigma\right)=1}\nu\left(\frac{k}{N/\sigma}\right)\left|\Theta_{h}\left(\frac{k}{N/\sigma}+\frac{i}{N^{2}},0,\begin{pmatrix}
0\\
\alpha
\end{pmatrix}\right)\right|^{2}\nonumber \\
 & =\frac{1}{N}\sum_{\begin{subarray}{c}
 		\sigma\mid N\\
 		\sigma\le N^\delta
 \end{subarray}}\sum_{\left(k,N/\sigma\right)=1}\nu\left(\frac{k}{N/\sigma}\right)f_{\sigma^{2},\frac{\pi}{2}}\left(\frac{a}{N/\sigma}, \begin{pmatrix}
 	a & b\\
 	N/\sigma & -k
 \end{pmatrix} \begin{pmatrix}
 	\frac 1 2\\
 	\alpha+ \frac 1 2\end{pmatrix}\right)\label{eq:2PointNoAverage}
\end{alignat}
where $\left(a,b\right)$ are (any) integers that solve the equation
$ak+bN/\sigma=-1$. Assuming conjecture \ref{conj:EquiConj},  there exist $k,l\in\mathbb{N}, \kappa_0\ge 2 $, such that for any Diophantine  $ \alpha $ of type $ \kappa\le\kappa_0 $, there exists $ t=t(\kappa)>0  $ such that the
r.h.s. of (\ref{eq:2PointNoAverage}) is equal to
\begin{gather}
	\frac{1}{N}\sum_{\begin{subarray}{c}
			\sigma\mid N\\
			\sigma\le N^{\delta}
	\end{subarray}}\phi\left(\frac{N}{\sigma}\right)\left(\hat{\nu}\left(0\right)\left\Vert h\right\Vert _{2}^{2}+O\left(L \left\Vert \nu\right\Vert _{C^{l}}\left\Vert f_{\sigma^{2},\frac{\pi}{2}}\right\Vert _{C^{k}}\left(N/\sigma\right)^{-t}\right)\right).\label{eq:equidistLimit}
\end{gather}
Since 
\[
\frac{1}{N}\sum_{\begin{subarray}{c}
	\sigma\mid N\\
	\sigma>N^{\delta}
	\end{subarray}}\phi\left(\frac{N}{\sigma}\right)=\frac{1}{N}\sum_{\begin{subarray}{c}
	\sigma\mid N\\
	\sigma<N^{1-\delta}
	\end{subarray}}\phi\left(\sigma\right)\ll N^{-\delta+\epsilon},
\]
we have
\[
\frac{1}{N}\sum_{\begin{subarray}{c}
	\sigma\mid N\\
	\sigma\le N^{\delta}
	\end{subarray}}\phi\left(\frac{N}{\sigma}\right)=1+O\left(N^{-\delta+\epsilon}\right).
\]
By (\ref{eq:Theta_est}), there exists $ n=n(k) $ such that that $\left\Vert f_{\sigma^{2},\frac{\pi}{2}}\right\Vert _{C^{k}}\ll \left\Vert h\right\Vert _{C^{n}}^2\sigma^{k+1}.$
Thus we conclude that (\ref{eq:equidistLimit}) is equal to
\[
\hat{\nu}\left(0\right)\left\Vert h\right\Vert _{2}^{2}+O\left(L\left\Vert h\right\Vert _{C^n}^2 \left\Vert \nu\right\Vert _{C^{l}}\left(N^{-\delta+\epsilon}+N^{-t+\delta\left(k+t\right)+\epsilon}\right)\right)
\]
and taking $\delta>0$ sufficiently small, Proposition \ref{prop:ConjecturedProp}
follows.\end{proof}

\end{document}